\newtheorem{thm}{Theorem}[section]
\newtheorem{cor}[thm]{Corollary}
\newtheorem{prop}[thm]{Proposition}
\newtheorem{lem}[thm]{Lemma}
\newtheorem*{thm1}{Theorem}
\theoremstyle{definition}
\newtheorem{defn}[thm]{Definition}
\newtheorem{exmp}[thm]{Example}
\newtheorem{exmps}[thm]{Examples}
\theoremstyle{remark}
\newtheorem{rem}[thm]{Remark}
\newcommand{\Hom}{\operatorname{Hom}}
\newcommand{\Aut}{\operatorname{Aut}}
\newcommand{\R}{\mathbb{R}}
\newcommand{\Z}{\mathbb{Z}}
\newcommand{\C}{\mathbb{C}}
\newcommand{\G}{\mathfrak{G}}
\newcommand{\W}{\mathfrak{W}}
\newcommand{\cG}{\mathcal{G}}
\newcommand{\GH}{\mathfrak{GH}}
\newcommand{\cgh}{\mathcal{GH}}
\newcommand{\Homeo}{\operatorname{Homeo}}
\newcommand{\Cob}{2\operatorname{Cob}_G^{o}}
\newcommand{\Cor}{\G\operatorname{-Bord}}
\newcommand{\GOTFT}{\operatorname{GOTFT}}
\newcommand{\CSFT}{\operatorname{CSFT}}
\newcommand{\Vect}{\operatorname{Vect}}
\newcommand{\scri}{\mathscr{I}}
\newcommand{\Gen}{\G\operatorname{-Gen}}
\newcommand{\Stab}{\operatorname{Stab}}
\newcommand{\Ass}{\mathcal{ASS}}
\newcommand{\GHbord}{\GH\operatorname{-Bord}}
\newcommand{\Hcob}{H\operatorname{-Cob}_G^{o}}
\newcommand{\Gfrob}{\Delta\G\operatorname{-Frob}}
\let\c@equation\c@thm
\numberwithin{equation}{section}
\title{Structured Topological Field Theories via Crossed Simplicial Groups}
\author{Walker H. Stern}
\address{
Max Planck Institute for Mathematics \\
Vivatsgasse 7, 53111 Bonn, Germany}
\email{walker.stern@hcm.uni-bonn.de}
\date{}
\begin{document}

\begin{abstract}

We show how the framework of crossed simplicial groups may be used to provide a classification of topological field theories on open cobordism categories defined by reductions of the structure group to a planar Lie group. Such theories are equivalent to algebras equipped with a group action and a non-degenerate trace satisfying certain invariance requirements which generalize the notion of a frobenius algebra. 

\end{abstract}

\maketitle

\tableofcontents

\newpage

\section*{Introduction}

Useful results on the classification of topological field theories have often been obtained by extracting combinatorial data from the relevant cobordism category and translating it into an algebraic framework. In particular, the study of graphs embedded in surfaces has proved to be a powerful tool in the study of such theories. For example, utilizing the connection between ribbon graphs and the moduli space of marked Riemann surfaces yields a classification of oriented two dimensional TFTs as in \cite{K}. Similar results were obtained for M\"obius graphs and unoriented two dimensional TFTs in \cite{B}, and for framed surfaces (and closed TFT's) in \cite{NR}.

The goal of this paper is to generalize these approaches using the notion of structured graphs and structured surfaces associated to a crossed simplicial group $\Delta\G$ introduced in \cite{DK}. To do so, we introduce a category of bordisms $\Cob$ whose bordisms $M$ are equipped with a reduction of the structure group 
\begin{equation*}
\rho: F_\mathbb{G}\to \operatorname{Fr}_M
\end{equation*} 
to the frame bundle of $M$ along a connective covering
\begin{equation*}
\mathbb{G}\to GL(2,\R)
\end{equation*}
which corresponds to the crossed simplicial group $\Delta\G$ as per \cite{DK}.
By further introducing the equivalent category $\Cor$, whose morphisms are structured graphs up to a notion of equivalence, we obtain a purely combinatorial framework in which to analyze the structure of TFTs defined on $\Cob$. 

The first three sections of this paper are used to review the relevant notions developed in \cite{DK}. To wit: Crossed simplicial groups, Planar Lie groups, structured surfaces, and structured graphs. We also prove several new lemmas within this framework which are necessary to the later development of the paper.

The majority of this paper is given over to analyzing the structure of the category $\Cor$, introduced in section 4. In section 5, we analyze the structure arising on a vector space $A$ that is the target of a symmetric monoidal functor 
\begin{equation*}
Z:\Cor\to \operatorname{Vect}_k
\end{equation*}
which we term a \emph{Crossed Simplicial Field Theory} (or CSFT). We find that $A$ must have the structure of a \emph{$\Delta\G$-frobenius algebra}, which generalizes the usual notion of a frobenius algebra.

Section 6 is primarily devoted to showing that, given a $\Delta\G$-frobenius algebra $A$, we can reconstruct a CSFT with target $A$, yielding the main result:
\begin{thm1}
For a balanced crossed simplicial group $\Delta\G$, there is an equivalence of categories
\begin{equation*}
\CSFT_k\cong \Gfrob_k
\end{equation*}
\end{thm1}

The rest of the paper comprises a brief digression to show that the theory developed in the previous sections can be applied with almost no alteration to equivariant TFTs, ie, to field theories on bordism categories whose morphisms come equipped with the additional datum of a principal bundle.

\section*{Acknowledgments}  I owe a great debt of thanks to my advisor, Tobias Dyckerhoff, for suggesting this topic, and for all his help, inspiration, and patience. I am also grateful to Thomas Poguntke for helpful discussions, and to the Max Planck Institute for Mathematics for supporting my studies.

\newpage

\section{Crossed Simplicial Groups}

\subsection{First Properties}
Crossed simplicial groups are objects which in some sense generalize Connes' Cyclic Category $\Lambda$. In the loosest sense, they are versions of the simplex category $\Delta$  ``decorated" with groups of automorphisms on the objects. In much the same way that one can define cyclic objects as functors $F:\Lambda^{op}\to\mathcal{C}$, there is an analogous notion for crossed simplicial group, and some crossed simplicial groups have been used to construct interesting analogs of cyclic homology (eg Dihedral or Quaternionic homology). More precisely:

\begin{defn}
A \emph{Crossed Simplicial Group} is a category $\Delta\G$ equipped with an embedding (suppressed in all following notation), $i:\Delta\to\Delta\G$ which is bijective on objects, and satisfying the property that, for every $[n],\;[m]$ there is a unique bijection
\begin{equation*}
CF: \Hom_{\Delta\G} ([n],[m]) \to \Aut_{\Delta\G}([n])\times\Hom_{\Delta} ([n],[m])
\end{equation*}
inverse to composition. This bijection is called the \emph{canonical factorization}. 

Typically, we use the notation $\G_n:=\Aut_{\Delta\G}([n])$. 
\end{defn}

We can apply the canonical factorization to pull back group elements along morphisms in $\Delta$. If $\phi\in\Hom_\Delta ([n],[m])$ and $g\in\G_m$, canonical factorization implies that there are unique morphisms $\phi^\ast g\in\G_n$ and $g^\ast \phi\in\Hom_\Delta([n],[m])$ such that TFDC
\begin{equation*}
\xymatrix{
[n]\ar[r]^{\phi^\ast g}\ar[d]_{g^\ast \phi} & [n]\ar[d]^{\phi}\\
[m]\ar[r]_{g} & [m]
}
\end{equation*}

In general, the maps of sets $\phi^\ast:\G_m\to\G_n$ will preserve identities, but are \textbf{not} group homomorphisms. However, in the special case where $[m]=[0]$, the set $\Hom_\Delta ([n],[0])$ consists of a unique morphism, $\omega_n$. Using the uniqueness of this morphism, it is immediate that the map
\begin{equation*}
\omega_n^\ast :\G_0\to\G_n
\end{equation*}
is a homomorphism. 

For any crossed simplicial group, we can trace through the canonical factorization to get the decomposition
\begin{eqnarray*}
\Hom_{\Delta\G}([0],[n]) & = & \Hom_\Delta ([0],[n])\times \G_0\\
                         & = & \{0,1,\ldots,n\}\times \G_0
\end{eqnarray*}
We can make use of this decomposition to define a functor 
\begin{equation*}
\lambda: \Delta\G \to \mathbf{FSet}
\end{equation*}
given on objects by $[n]\mapsto \Hom_{\Delta\G} ([0],[n])/\G_0$. On morphisms, we can compute the value of $\lambda$ explictly using the canonical factorization. In what follows, we denote by $CF_\Delta$ the projection of the canonical factorization onto its second component, and by $CF_\G$ the projection onto the first. We also identify $0\leq i\leq n$ with the obvious map in $\Hom_\Delta([0],[n])$. Given a morphism $\psi:[n]\to[m]$ in $\Delta\G$, we can form the commutative diagram
\begin{equation*}
\xymatrix@=4em{
[n]\ar[r]^{\psi}  & [m] \\
[0]\ar[u]^{i} \ar[r]_{CF_\G(\psi\circ i)} & [0] \ar[u]_{CF_\Delta (\psi\circ i)}
}
\end{equation*}
so that $\lambda(\psi)(i)=CF_\Delta(\psi\circ i)$. Restricting this functor  $\lambda$ to $\G_n$,  we get a group homomorphism
\begin{equation*}
\lambda_n:\G_n \to \operatorname{Isom}_{\operatorname{FSet}} (\lbrace 0,1,\ldots,n\rbrace,\lbrace 0,1,\ldots,n\rbrace)=\Sigma_{n+1}
\end{equation*} 
The kernel of this homomorphism will be denoted by $\G_n^0$.

\subsection{Examples}
Many examples of crossed simplicial groups have appeared in the literature (for a few examples, see bibliography), sometimes appearing under the name "skew-simplicial groups" (eg \cite{K}). We list a few here for convenient reference. 

\begin{exmp}
 As a first example, we can take the \emph{trivial crossed simplicial group} $\Delta$. 
\end{exmp}

\begin{exmp}
\emph{Connes' cyclic category}, which we denote $\Lambda$, following \cite{DK}. We will denote the objects, as in $\Delta$, by the symbols $[n]$. If we let $C_n$ be the standard circle with $n+1$ marked points, we can define a morphism $f:[n]\to[m]$ as a homotopy class of monotone degree 1 maps $C_n\to C_m$ mapping the set of marked points of $C_m$ into the marked points of $C_n$. The morphisms of $\Delta$ can be recovered by considering only those morphisms whose homotopy inverse sends the arc between $m$ and $0$ to the arc between $n$ and $0$. The $n$th automorphism group is $\Lambda_n=\Z/(n+1)\Z$, and the homomorphism $\lambda_n$ sends $1$ to the permuation $(0,1,\ldots,n)$   
\end{exmp}

\begin{exmp} 
The \emph{symmetric crossed simplicial group} $\Delta S$. In this case, we treat the objects $[n]$ as sets, and let the maps $\Sigma_{n+1}$ be precisely the maps on sets they represent. Composition is given by composition is given by composition in $\mathbf{FSet}$. The homomorphisms $\lambda_n$ are precisely the identities on $\Sigma_{n+1}$  
\end{exmp}

\begin{exmp}
 The \emph{braid crossed simplicial group} $\Delta B$. Objects are once again the standard ordinals, and morphisms are given by "generalized braids" as follows. 

A \emph{generalized braid} is a bipartite graph with over/under crossings as before, however, the two subsets of vertices may have different cardinality, and two edges may have the same final vertex. To have a well-defined composition law, it is necessary to also fix and record the order with which the edges enter the final vertices. An example of such a generalized braid:

\begin{center}
\begin{tikzpicture}
 \node[label=0,circle,fill=black, draw, inner sep=0pt, minimum size=4pt](p1) at (-1,0) {};
 \node[label=1,circle,fill=black, draw, inner sep=0pt, minimum size=4pt](p1) at (0,0) {};
 \node[label=2,circle,fill=black, draw, inner sep=0pt, minimum size=4pt](p1) at (1,0) {};
 \node[label=3,circle,fill=black, draw, inner sep=0pt, minimum size=4pt](p1) at (2,0) {};
 \node[label=below:0,circle,fill=black, draw, inner sep=0pt, minimum size=4pt](p1) at (0,-3) {};
 \node[label=below:1,circle,fill=black, draw, inner sep=0pt, minimum size=4pt](p1) at (1,-3) {};
\begin{knot}[
clip width=5,
clip radius=8pt ,]
\strand [line width=0.4mm] (0,0) to [out= down, in=up] (-1,-1) to [out=down,in=up] (1,-2)to [out=down,in=up] (1,-3);
\strand [line width=0.4mm] (-1,0) to [out= down, in=up] (0,-1) to [out= down, in=up] (0,-3);
\strand [line width=0.4mm] (1,0)  to [out= down, in=up] (-1,-2) to [out= down, in=135] (0,-3);
\strand [line width=0.4mm] (2,0)  to [out= down, in=up] (2,-2) to [out= down, in=45] (1,-3);
\flipcrossings{2,3}
\end{knot}
\end{tikzpicture}
\end{center}

The composition law is defined in the obvious way. For example:
\begin{center}
\begin{tikzpicture}
 \node[label=0,circle,fill=black, draw, inner sep=0pt, minimum size=4pt] at (-1,0) {};
 \node[label=1,circle,fill=black, draw, inner sep=0pt, minimum size=4pt] at (0,0) {};
 \node[label=2,circle,fill=black, draw, inner sep=0pt, minimum size=4pt] at (1,0) {};
 \node[label=below:0,circle,fill=black, draw, inner sep=0pt, minimum size=4pt] at (0,-3) {};
 \node[label=below:1,circle,fill=black, draw, inner sep=0pt, minimum size=4pt] at (1,-3) {};
\begin{knot}[
clip width=5,
clip radius=8pt ,]
\strand [line width=0.4mm] (0,0) to [out= down, in=up] (-1,-1) to [out=down,in=up] (1,-2)to [out=down,in=up] (1,-3);
\strand [line width=0.4mm] (-1,0) to [out= down, in=up] (0,-1) to [out= down, in=up] (0,-3);
\strand [line width=0.4mm] (1,0)  to [out= down, in=up] (-1,-2) to [out= down, in=135] (0,-3);
\flipcrossings{2,3}
\end{knot}
\node at (2,-1.5) {$+$};
\node[label=0,circle,fill=black, draw, inner sep=0pt, minimum size=4pt] at (3,-1) {};
 \node[label=1,circle,fill=black, draw, inner sep=0pt, minimum size=4pt] at (4,-1) {};
 \node[label=below:0,circle,fill=black, draw, inner sep=0pt, minimum size=4pt] at (3,-2) {};
 \node[label=below:1,circle,fill=black, draw, inner sep=0pt, minimum size=4pt] at (4,-2) {};
\begin{knot} 
\strand [line width=0.4mm] (3,-1) to [out= down, in=up] (4,-2);
\strand [line width=0.4mm] (4,-1) to [out= down, in=up] (3,-2);
\end{knot}
\node at  (5,-1.5) {$=$};
\node[label=0,circle,fill=black, draw, inner sep=0pt, minimum size=4pt] at (6,0) {};
 \node[label=1,circle,fill=black, draw, inner sep=0pt, minimum size=4pt] at (7,0) {};
 \node[label=2,circle,fill=black, draw, inner sep=0pt, minimum size=4pt] at (8,0) {};
 \node[label=below:0,circle,fill=black, draw, inner sep=0pt, minimum size=4pt] at (7,-4) {};
 \node[label=below:1,circle,fill=black, draw, inner sep=0pt, minimum size=4pt] at (8,-4) {};
\begin{knot}[
clip width=5,
clip radius=8pt ,]
\strand [line width=0.4mm] (7,0) to [out= down, in=up] (6,-1) to [out=down,in=up] (8,-2)to [out=down,in=up] (8,-3)to [out=down,in=up] (7,-4);
\strand [line width=0.4mm] (6,0) to [out= down, in=up] (7,-1) to [out= down, in=up] (7.1,-3) to [out=down, in=up] (8,-4);
\strand [line width=0.4mm] (8,0)  to [out= down, in=up] (6,-2) to [out= down, in=up] (6.9,-3) to [out=down, in=135] (8,-4);
\flipcrossings{2,4,5,3}
\end{knot}
\end{tikzpicture}
\end{center}

It is clear that the $n$th automorphism group is precisely the braid group $B_{n+1}$. We find a copy of $\Delta$ in $\Delta B$ by restricting to those generalized braids with no crossings. To see canonical factorization, one need only draw a horizontal line after the last crossing in the generalized braid, as in the following diagram:

\begin{center}
\begin{tikzpicture}
\draw[red] (-1.5,-2) -- (2.5,-2);
 \node[label=0,circle,fill=black, draw, inner sep=0pt, minimum size=4pt](p1) at (-1,0) {};
 \node[label=1,circle,fill=black, draw, inner sep=0pt, minimum size=4pt](p1) at (0,0) {};
 \node[label=2,circle,fill=black, draw, inner sep=0pt, minimum size=4pt](p1) at (1,0) {};
 \node[label=3,circle,fill=black, draw, inner sep=0pt, minimum size=4pt](p1) at (2,0) {};
 \node[label=below:0,circle,fill=black, draw, inner sep=0pt, minimum size=4pt](p1) at (0,-3) {};
 \node[label=below:1,circle,fill=black, draw, inner sep=0pt, minimum size=4pt](p1) at (1,-3) {};
\begin{knot}[
clip width=5,
clip radius=8pt ,]
\strand [line width=0.4mm] (0,0) to [out= down, in=up] (-1,-1) to [out=down,in=up] (1,-2)to [out=down,in=up] (1,-3);
\strand [line width=0.4mm] (-1,0) to [out= down, in=up] (0,-1) to [out= down, in=up] (0,-3);
\strand [line width=0.4mm] (1,0)  to [out= down, in=up] (-1,-2) to [out= down, in=135] (0,-3);
\strand [line width=0.4mm] (2,0)  to [out= down, in=up] (2,-2) to [out= down, in=45] (1,-3);
\flipcrossings{2,3}
\end{knot}
\node at (3,-1.5) {$=$};
\begin{scope}[shift={(5,0)}]
 \node[label=0,circle,fill=black, draw, inner sep=0pt, minimum size=4pt](p1) at (-1,0) {};
 \node[label=1,circle,fill=black, draw, inner sep=0pt, minimum size=4pt](p1) at (0,0) {};
 \node[label=2,circle,fill=black, draw, inner sep=0pt, minimum size=4pt](p1) at (1,0) {};
 \node[label=3,circle,fill=black, draw, inner sep=0pt, minimum size=4pt](p1) at (2,0) {};
 \node[label=below:0,circle,fill=black, draw, inner sep=0pt, minimum size=4pt](p1) at (0,-3) {};
 \node[label=below:1,circle,fill=black, draw, inner sep=0pt, minimum size=4pt](p1) at (1,-3) {};
 \node[,circle,fill=black, draw, inner sep=0pt, minimum size=4pt](p1) at (-1,-1.8) {};
 \node[,circle,fill=black, draw, inner sep=0pt, minimum size=4pt](p1) at (0,-1.8) {};
 \node[,circle,fill=black, draw, inner sep=0pt, minimum size=4pt](p1) at (1,-1.8) {};
 \node[,circle,fill=black, draw, inner sep=0pt, minimum size=4pt](p1) at (2,-1.8) {};
  \node[,circle,fill=black, draw, inner sep=0pt, minimum size=4pt](p1) at (-1,-2.2) {};
 \node[,circle,fill=black, draw, inner sep=0pt, minimum size=4pt](p1) at (0,-2.2) {};
 \node[,circle,fill=black, draw, inner sep=0pt, minimum size=4pt](p1) at (1,-2.2) {};
 \node[,circle,fill=black, draw, inner sep=0pt, minimum size=4pt](p1) at (2,-2.2) {};
\begin{knot}[
clip width=5,
clip radius=8pt ,]
\strand [line width=0.4mm] (0,0) to [out= down, in=up] (-1,-1) to [out=down,in=up] (1,-1.8)to [out=down,in=up] (1,-1.8);
\strand [line width=0.4mm] (-1,0) to [out= down, in=up] (0,-1) to [out=down, in=up] (0,-1.8);
\strand [line width=0.4mm] (1,0)  to [out= down, in=up] (-1,-1.8) to [out=down, in=up] (-1,-1.8);
\strand [line width=0.4mm] (2,0)  to [out= down, in=up] (2,-1.8) to [out=down, in=up] (2,-1.8);
\strand [line width=0.4mm] (-1,-2.2) to [out=down, in=135] (0,-3);
\strand [line width=0.4mm] (0,-2.2) to [out=down, in=up] (0,-3);
\strand [line width=0.4mm] (1,-2.2) to [out=down, in=up] (1,-3);
\strand [line width=0.4mm] (2,-2.2) to [out=down, in=45] (1,-3);
\flipcrossings{2}
\end{knot}
\end{scope}
\end{tikzpicture}
\end{center}

The homomorphism $\lambda_n$ can be realized by forgetting crossings and viewing a braid as a permution of the $(n+1)$-element set.

For more information, see for example \cite{Kr} or \cite{FL}
\end{exmp}

\subsection{Canonical Parity}

\begin{defn}
We call a crossed simplicial group \emph{semi-constant} if the maps $\omega_n^\ast$ are all isomorphisms.
\end{defn}

\begin{rem}
Any crossed simplicial group contains the semiconstant crossed simplicial group associated to $\G_0$.
\end{rem}

\begin{defn}
For a group $G$ and a small category $\mathcal{C}$, we define an \emph{action} of $G$ on $\mathcal{C}$ to be a homomorphism
\begin{equation*}
G\to \operatorname{Aut}_{\operatorname{Cat}}(\mathcal{C})
\end{equation*}

For $G$ acting on $\mathcal{C}$ we can define the \emph{semidirect product}  $G\ltimes \mathcal{C}$ to be the category with objects the objects of $\mathcal{C}$ and morphisms $(g,\phi):g.x\to y$ for $\phi\in\Hom_\mathcal{C} (x,y)$ and $g\in G$ which compose according to the law
\begin{equation*}
(g,\phi)\circ (h,\psi)=(gh,\phi\circ g \circ \psi)
\end{equation*}
\end{defn}

\begin{prop}
Any semi-constant crossed simplicial group $\Delta\G$ is isomorphic to the semidirect product $\G_0\ltimes \Delta$.
\end{prop}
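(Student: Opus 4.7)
The plan is to construct both an action of $\G_0$ on $\Delta$ and the comparison functor $F\colon \G_0 \ltimes \Delta \to \Delta\G$ directly from the canonical factorization. Semi-constancy makes each $\omega_n^*\colon \G_0 \to \G_n$ a group isomorphism, so canonical factorization immediately supplies a set-level bijection $\Hom_\Delta([n],[m]) \times \G_0 \to \Hom_{\Delta\G}([n],[m])$ sending $(\phi, g)$ to $\phi \circ \omega_n^*(g)$; the task is to arrange this bijection as a functor out of a semidirect product.

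To define the action, I set $F_g$ to be the identity on objects of $\Delta$ for every $g \in \G_0$. For a morphism $\phi\colon [n]\to[m]$ in $\Delta$, I form the composite $\omega_m^*(g) \circ \phi$ in $\Delta\G$, canonically factor it as $\phi' \circ h$ with $\phi' \in \Hom_\Delta([n],[m])$ and $h \in \G_n$, and define $F_g(\phi) := \phi'$. The main technical step, and the principal obstacle in the argument, is the conjugation identity
\begin{equation*}
\omega_m^*(g) \circ \phi \;=\; F_g(\phi) \circ \omega_n^*(g),
\end{equation*}
i.e.\ that the $\G$-component $h$ is forced to equal $\omega_n^*(g)$. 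To prove it I compose both sides on the left with $\omega_m$. Since $F_g(\phi) \in \Hom_\Delta$ we have $\omega_m \circ F_g(\phi) = \omega_n$; on the other side, the canonical factorization diagram applied to $\phi = \omega_m$ (using that $\Hom_\Delta([m],[0])$ is a singleton, so that $g^*\omega_m = \omega_m$) gives $\omega_m \circ \omega_m^*(g) = g \circ \omega_m$, whence $\omega_m \circ \omega_m^*(g) \circ \phi = g \circ \omega_n = \omega_n \circ \omega_n^*(g)$. Comparing, $\omega_n \circ h = \omega_n \circ \omega_n^*(g)$, and uniqueness of canonical factorization for morphisms $[n] \to [0]$ forces $h = \omega_n^*(g)$.

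Given the conjugation identity, checking that $F_g\colon \Delta \to \Delta$ is a functor and that $g \mapsto F_g$ is a group homomorphism $\G_0 \to \Aut_{\operatorname{Cat}}(\Delta)$ is mechanical, using only that $\omega_n^*$ is itself a group homomorphism at each level. Crucially, the resulting action is trivial on objects, so the semidirect product $\G_0 \ltimes \Delta$ has the same objects as $\Delta\G$.

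Finally, I define $F\colon \G_0 \ltimes \Delta \to \Delta\G$ by $F([n]) = [n]$ on objects and $F(g, \phi) = \phi \circ \omega_n^*(g)$ on morphisms. It is the identity on objects and bijective on morphism sets (exactly by canonical factorization combined with the isomorphism $\omega_n^* \colon \G_0 \to \G_n$), so it is an isomorphism of categories once functoriality is verified. Functoriality reduces, via the semidirect product rule $(g_2,\phi_2)\circ(g_1,\phi_1) = (g_2 g_1,\, \phi_2 \circ F_{g_2}(\phi_1))$, to the identity
\begin{equation*}
\bigl(\phi_2 \circ F_{g_2}(\phi_1)\bigr) \circ \omega_n^*(g_2 g_1) \;=\; \bigl(\phi_2 \circ \omega_m^*(g_2)\bigr) \circ \bigl(\phi_1 \circ \omega_n^*(g_1)\bigr)
\end{equation*}
in $\Delta\G$, which follows from the homomorphism property of $\omega_n^*$ together with one application of the conjugation identity.
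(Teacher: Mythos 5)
Your argument is correct. Note, though, that the paper does not prove this proposition itself: its ``proof'' is a citation to \cite{DK}, Proposition 1.12, so there is no in-text argument to compare against; what you have written is a self-contained replacement for that citation. Your route is the natural one, and the key step checks out: the conjugation identity $\omega_m^\ast(g)\circ\phi = F_g(\phi)\circ\omega_n^\ast(g)$ follows exactly as you say, by postcomposing with $\omega_m$, using $\omega_m\circ\omega_m^\ast(g)=g\circ\omega_m$ and the fact that every morphism $[n]\to[0]$ in $\Delta\G$ has the unique factored form $\omega_n\circ h$, so the $\G_n$-component is forced. Two points are worth making explicit, since they sharpen the structure of your proof: first, the conjugation identity and hence the action $g\mapsto F_g$ of $\G_0$ on $\Delta$ exist for \emph{every} crossed simplicial group (this is precisely what underlies the paper's subsequent remark defining the canonical parity), and semi-constancy is used only at the single point where you need $\omega_n^\ast:\G_0\to\G_n$ to be bijective so that $(g,\phi)\mapsto\phi\circ\omega_n^\ast(g)$ is a bijection on hom-sets; second, your functoriality computation, $\bigl(\phi_2\circ\omega_m^\ast(g_2)\bigr)\circ\bigl(\phi_1\circ\omega_n^\ast(g_1)\bigr)=\phi_2\circ F_{g_2}(\phi_1)\circ\omega_n^\ast(g_2g_1)$, matches the paper's composition law $(g,\phi)\circ(h,\psi)=(gh,\phi\circ g\circ\psi)$ with $g\circ\psi$ read as $F_g(\psi)$, so the convention is consistent and no inverse or opposite action is needed. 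With those observations your proof is complete and can stand in place of the external reference.
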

\begin{proof}
(\cite{DK} proposition 1.12)
\end{proof}

Since $\text{Aut}_\text{Cat} (\Delta)$ is generated by an involution $\daleth$, we have that for any crossed simplicial group, the action of $\G_0$ on $\Delta$ endows $\G_0$ with a \emph{Canonical Parity}, that is, a homomorphism
\begin{equation*}
\G_0\to \Z/2\Z
\end{equation*}

\begin{defn}
For a crossed simplicial group $\Delta\G$  and an algebra $A$, we define a \emph{twisted action} of $\G_0$ on $A$ to be an action whereby the even elements of $\G_0$ act by automorphisms, and the odd elements of $\G_0$ act by anti-automorphisms.  
\end{defn}

\section{Structured Sets and Structured Graphs}

A key use of the formalism surrounding crossed simplicial groups is its ability to provide interesting algebraic and combinatorial structures for study. For our purposes, the most useful of these are the (related) notions of structured sets and structured graphs, which lead to a connection between the theory of crossed simplicial groups and the theory of operads.

\begin{defn}
A \emph{$\Delta\G$-structure} on a set $I$ of cardinality $n+1$ consists of the following data:
\begin{enumerate}
\item A right $\G_n$-torsor $\mathcal{O}(I)$.
\item A map 
\begin{equation*}
\rho: \mathcal{O}(I)\longrightarrow \text{Isom}_\text{FSet} (\lbrace0,1,\ldots,n\rbrace, I)
\end{equation*}
equivariant along the homomorphism $\lambda_n: \G_n\to \Sigma_{n+1}$.
\end{enumerate}
The elements of the torsor $\mathcal{O}(I)$ are called \emph{(structured) frames} of $I$.
\end{defn}

We can, in point of fact, form a category $\cG$ whose objects are precisely $\Delta\G$-structured sets, and whose morphisms are morphisms of structured sets as follows:

\begin{defn}
A \emph{morphism of $\Delta\G$-structured sets} 
\begin{equation*}
\psi: (\mathcal{O}(I^\prime),I^\prime,\rho^\prime)\to(\mathcal{O}(I),I,\rho)
\end{equation*}
where $\vert I \vert=n+1$ and $\vert I^\prime \vert=n^\prime+1$ is given by a collection
\begin{equation*}
\left\lbrace \psi_{f,f^\prime}\in\Hom_{\Delta\G}([n^\prime],[n]) \mid f^\prime\in\mathcal{O}(I^\prime)\;\; f\in \mathcal{O}(I) \right\rbrace
\end{equation*}
such that, for any $g^\prime\in\G_{n^\prime}$ and $g\in\G_{n}$ we have that
\begin{equation*}
\psi_{f.g,f^\prime .g^\prime}=g^{-1}\circ \psi_{f,f^\prime}\circ g^\prime
\end{equation*}
Composition of morphisms is defined via the formula
\begin{equation*}
(\psi\circ \phi)_{f,f^{\prime\prime}}=\psi_{f,f^\prime}\circ\phi_{f^\prime,f^{\prime\prime}}
\end{equation*}
\end{defn}

\begin{rem}
The composition is well defined precisely because of the equivariance condition, ie, for $f^\prime, h^\prime\in \mathcal{O}(I^\prime)$, we can choose $g^\prime\in\G_{n^\prime}$ such that $h^\prime=f^\prime.g^\prime$. Then we have that
\begin{eqnarray*}
(\psi\circ \phi)_{f,f^{\prime\prime}} & = & \psi_{f,h^\prime}\circ\phi_{h^\prime,f^{\prime\prime}}\\
& = & \psi_{f,f^\prime.g^\prime}\circ\phi_{f^\prime.g^\prime,f^{\prime\prime}}\\
& = & \psi_{f,f^\prime}\circ g^\prime \circ\left(g^\prime\right)^{-1}\circ \phi_{f^\prime,f^{\prime\prime}}\\
& = & \psi_{f,f^\prime}\circ\phi_{f^\prime,f^{\prime\prime}}
\end{eqnarray*}
\end{rem}

\begin{rem}
We can define a functor $\epsilon:\Delta\G\to\cG$ defined on objects by sending the object $[n]$ to the \emph{standard $\Delta\G$-structured set} $\epsilon[n]=(\G_n,\lbrace0,1,\ldots,n\rbrace,\lambda_n)$. 

Since the datum determinining a morphism $\lbrace \psi_{f,f^\prime}\rbrace$ is determined by the equivariance condition and any one of it's members, we may associate the morphism $\lbrace \psi_{f,f^\prime}\rbrace:\epsilon[n]\to\epsilon[m]$ with the component of both the identities 
\begin{equation*}
\psi_{id_[n],id_[m]}\in\Hom_{\Delta\G}([n].[m])
\end{equation*}. 
The value of $\epsilon$ on morphisms is then given by this correspondence, making the functor fully faithful. 

Moreover, given a structured set $(\mathcal{O}(I),I,\rho)$, we can trivialize the torsor by choosing an arbitrary structured frame $f$. This yields an association of $\mathcal{O}(I)$ with $\G_n$, and, through the isomorphism $\sigma=\rho(f)$, of $[n]$ with $I$. Under these associations, we see that $\rho$ becomes precisely the map $\lambda_n$, so that $\epsilon$ is, in fact, an equivalence of categories.
\end{rem}

\begin{exmp}
The automorphism groups of the trivial crossed simplicial group $\Delta$ are themselves trivial, so that choosing a torsor $\mathcal{O}(I)$ and a map $\rho$ is equivalent to choosing a linear order on the set $I$. Morphisms are precisely those morphisms which preserve this linear order, so  the category of $\Delta$-structured sets is precisely the category $\boldsymbol{\Delta}$, which is a well-known equivalent to $\Delta$.
\end{exmp}

\begin{exmp}
If we explore the category of structured sets corresponding to the cyclic category $\Lambda$, we see that a choice of $\Lambda$-structure on a set $I$ corresponds to a choice of $n+1$ linear orders on $I$ which are related to one another by cyclic permutations of the labels; in other words: a cyclic order on $I$. Morphisms are maps of sets which preserve the cyclic order. 
\end{exmp}

For any morphism $\psi$ in $\cG$, we can also talk about the induced map on sets. Much like $\Delta\G$, the category $\cG$ admits a functor to $\mathbf{FSet}$, as described below (compare \cite{DK}, proposition 2.3).

\begin{prop}
Given a $\Delta\G$-structured set $(\mathcal{O}(I),I,\rho)$, there are canonical identifications
\begin{eqnarray*}
\mathcal{O}(I) & \cong &  \operatorname{Isom}_\cG (\epsilon[n],(\mathcal{O}(I),I))\\
I & \cong & \Hom_\cG (\epsilon[0],(\mathcal{O}(I),I)
\end{eqnarray*}
so that, in particular, there is a functor $\lambda_\cG:\cG\to \mathbf{FSet}$ extending $\lambda:\Delta\G\to\mathbf{FSet}$.
\end{prop}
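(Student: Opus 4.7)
The plan is to construct the two canonical bijections explicitly, using the equivariance condition on morphisms in $\cG$ combined with the canonical factorization in $\Delta\G$, and then bootstrap these identifications into a functorial definition of $\lambda_\cG$.

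For the identification $\mathcal{O}(I) \cong \operatorname{Isom}_\cG(\epsilon[n], X)$, I would define $\Phi: \mathcal{O}(I) \to \operatorname{Isom}_\cG(\epsilon[n], X)$ by sending $f \in \mathcal{O}(I)$ to the morphism $\hat f$ whose components are prescribed by $\hat f_{f,\, \mathrm{id}_{[n]}} = \mathrm{id}_{[n]}$ and propagated by equivariance, so $\hat f_{f.k,\, k'} = k^{-1} \circ k'$. I would then check that $\hat f$ satisfies the equivariance axiom, construct an inverse $\hat f^{-1}: X \to \epsilon[n]$ symmetrically (with $(\hat f^{-1})_{\mathrm{id}_{[n]},\, f} = \mathrm{id}_{[n]}$), and compute the two composites to see that $\hat f$ is genuinely an isomorphism. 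For bijectivity of $\Phi$, the key observation is that for any isomorphism $\psi$, the component $\psi_{f_0,\, \mathrm{id}_{[n]}} \in \Hom_{\Delta\G}([n],[n])$ must admit a two-sided inverse in $\Delta\G$ (extracted from the components of $\psi^{-1}$), so canonical factorization forces it into $\G_n$; the preimage of $\psi$ is then $f_0 \cdot \psi_{f_0, \mathrm{id}}^{-1}$.

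For the identification $I \cong \Hom_\cG(\epsilon[0], X)$, I would send $i \in I$ to the morphism $\tilde i: \epsilon[0] \to X$ determined by $\tilde i_{f_0,\, \mathrm{id}_{\G_0}} = \rho(f_0)^{-1}(i)$, viewed inside $\Hom_{\Delta\G}([0],[n])$ via the canonical embedding $\Delta \hookrightarrow \Delta\G$. Independence of the choice of $f_0$ is a direct calculation: replacing $f_0$ by $f_0.k$ transforms $\rho(f_0)^{-1}(i)$ into $\lambda_n(k)^{-1}(\rho(f_0)^{-1}(i))$, and the canonical factorization of $k^{-1} \circ \rho(f_0)^{-1}(i)$ has precisely this morphism as its $\Delta$-part, matching the equivariance condition on the collection. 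The inverse map takes a morphism $\psi$ to $\rho(f_0)\bigl(CF_\Delta(\psi_{f_0,\, \mathrm{id}})\bigr) \in I$, whose independence of $f_0$ rests on the same equivariance of $\rho$ along $\lambda_n$.

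With both identifications in hand, I would define $\lambda_\cG:\cG\to \mathbf{FSet}$ on objects by $X \mapsto I_X$, and on a morphism $\psi: X\to Y$ by $\lambda_\cG(\psi)(i) = $ the element of $I_Y$ corresponding to $\psi\circ\tilde i$ under the second bijection. Functoriality is immediate from associativity of composition in $\cG$, and the identity $\lambda_\cG\circ\epsilon = \lambda$ reduces to the calculation that $\lambda_\cG(\epsilon[n]) = \{0,1,\dots,n\}$, which unwinds via the full-faithfulness of $\epsilon$ recorded in the preceding remark together with the explicit description of $\lambda$. The main technical obstacle is the careful bookkeeping in the second identification: one must track the interaction of the $\G_n$-torsor action on $\mathcal{O}(I)$, the $\G_0$-factor appearing in the canonical factorization of $\Hom_{\Delta\G}([0],[n])$, and the $\lambda_n$-equivariance of $\rho$, and verify that these fit together consistently; the remaining verifications are mechanical once this is established.
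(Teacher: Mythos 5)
The paper does not actually prove this proposition (it is imported from \cite{DK}, Prop.\ 2.3), so your argument has to stand on its own. Your treatment of the first identification is essentially right: a morphism $\epsilon[n]\to X$ is determined by a single component, invertibility of $\psi$ forces $\psi_{f_0,\mathrm{id}}\in\Aut_{\Delta\G}([n])=\G_n$, and propagation by equivariance gives the bijection. One small slip: with the paper's convention $\psi_{f.g,f'.g'}=g^{-1}\circ\psi_{f,f'}\circ g'$ one has $\psi_{f_0.k,\mathrm{id}}=k^{-1}\circ\psi_{f_0,\mathrm{id}}$, so the preimage of $\psi$ is $f_0.\psi_{f_0,\mathrm{id}}$, not $f_0.\psi_{f_0,\mathrm{id}}^{-1}$.

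The second identification is where there is a genuine gap. Your assignment $i\mapsto\tilde i$ is \emph{not} independent of the chosen frame $f_0$, and in fact no bijection $I\cong\Hom_\cG(\epsilon[0],X)$ can exist when $\G_0$ is nontrivial. Equivariance forces $\tilde i_{f_0.k,\mathrm{id}}=k^{-1}\circ\rho(f_0)^{-1}(i)$ on the nose, whereas basing the construction at $f_0.k$ produces only the $\Delta$-part $CF_\Delta\bigl(k^{-1}\circ\rho(f_0)^{-1}(i)\bigr)=\lambda_n(k)^{-1}\bigl(\rho(f_0)^{-1}(i)\bigr)$; the two differ by the $\G_0$-component $CF_\G\bigl(k^{-1}\circ\rho(f_0)^{-1}(i)\bigr)$, which is nontrivial in general (already in the dihedral case, where pulling a reflection fixing a marked point back along the corresponding vertex map gives the nontrivial element of $\G_0$). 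More decisively, a morphism $\epsilon[0]\to X$ is freely determined by the single component $\psi_{f_0,\mathrm{id}}\in\Hom_{\Delta\G}([0],[n])$, so $\Hom_\cG(\epsilon[0],X)$ has $(n+1)\cdot\lvert\G_0\rvert$ elements while $\lvert I\rvert=n+1$. The identification has to be taken modulo precomposition with $\Aut_\cG(\epsilon[0])\cong\G_0$, i.e.\ $I\cong\Hom_\cG(\epsilon[0],X)/\G_0$; this is exactly parallel to the definition $\lambda([n])=\Hom_{\Delta\G}([0],[n])/\G_0$ and is what makes the claim that $\lambda_\cG$ extends $\lambda$ come out right (as printed, the proposition suppresses this quotient, but without it the statement cannot hold literally). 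Your ``inverse'' map $\psi\mapsto\rho(f_0)\bigl(CF_\Delta(\psi_{f_0,\mathrm{id}})\bigr)$ is genuinely choice-independent and is precisely the quotient map realizing this identification; your forward map is only a frame-dependent section of it. With this correction the remainder of your plan does go through: postcomposition commutes with the $\G_0$-action by precomposition, hence descends to orbits, and this defines $\lambda_\cG$ on morphisms (recovering the explicit, choice-independent formula recorded in the remark following the proposition), with functoriality and the extension property as you describe.
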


\begin{rem}
We can explicitly compute $\lambda_\cG$ on morphisms using the fact that it is an extension of $\lambda$, ie that TFDC,
\begin{equation*}
\xymatrix{
\Delta\G\ar[rr]^\epsilon\ar[dr]_{\lambda} & & \cG\ar[dl]^{\lambda_\cG} \\
 & \mathbf{FSet} &
}
\end{equation*}
We then see that it takes the value (independent of all choices made) of
\begin{equation*}
\lambda_\cG (\lbrace \psi_{f,f^\prime}\rbrace)=\rho^\prime(f^\prime)^{-1}\circ \lambda(\psi_{f,f^\prime}) \circ \rho(f)
\end{equation*}
\end{rem}

\subsection{Structured Graphs}

Using the formalism of $\Delta\G$-structured sets, we can further define $\G$-structured graphs. To begin, we recall the following definition of a graph.

\begin{defn}
A \emph{graph} $\Gamma$ is given by 
\begin{enumerate}
\item A set $V$ of vertices
\item A set $H$ of half-edges
\item An involution $\mathfrak{y}$ on $H$
\item A map $s:H\to V$
\end{enumerate}

The set $H(v):=s^{-1}(v)$ for $v\in V$ is the set of half-edges incident to $v$. The \emph{edges} of $\Gamma$ are the sets $\lbrace h,\mathfrak{y}(h)\rbrace$ for $h\in H$. If $h$ is a fixed point of $\mathfrak{y}$, we call it an \emph{external half-edge} of $\Gamma$. 
\end{defn}

\begin{defn}
The \emph{incidence category} $I(\Gamma)$ corresponding to a graph $\Gamma$ is the category with objects the edges and vertices of $\Gamma$, and, for every edge $e$ incident to a vertex $v$, a morphism $v\to e$.  The \emph{incidence diagram} $I_\Gamma:I(\Gamma)\to \mathbf{Set}$ is the functor that assigns to every vertex $v$ the set $H(v)$, and to every edge $e=\lbrace h,\mathfrak{y}(h)\rbrace$ the set $\lbrace h,\mathfrak{y}(h)\rbrace$. For a half-edge $h$ incident to $v$, $I_\Gamma$ sends the morphism $v\to e$ to the map $H(v)\to \lbrace h,\mathfrak{y}(h)\rbrace$ which sends $h$ to $h$, and collapses all other half-edges to $\mathfrak{y}(h)$.
\end{defn}

\begin{rem}
We can recover the conventional image of a graph by taking the geometric realization of the nerve of the incidence category. This simplicial complex will have a zero simplex for every edge and vertex, and a one simplex for every half edge. We use the notation $\vert\Gamma\vert:=\vert N(I(\Gamma))\vert$.
\end{rem}

\begin{defn}
For a crossed simplicial group $\Delta\G$ with corresponding category of structured sets $\cG$, a $\Delta\G$-structure on a graph $\Gamma$ is a lift $\tilde{I}_\Gamma$ of the incidence diagram $I_\Gamma$ to $\cG$ as in the diagram 
\begin{equation*}
\xymatrix{
 & \cG\ar[d]^{\lambda_\cG} \\
 I(\Gamma)\ar[r]_{I_\Gamma}\ar[ur]^{\tilde{I}_\Gamma} & \mathbf{FSet}
}
\end{equation*}
\end{defn}

\begin{defn}
An \emph{augmented $\Delta\G$-structured graph} is a $\Delta\G$-structured graph with the additional datum of a morphism of structured sets $\phi_h:H(v)\to \epsilon[1]$ for every external half-edge $h$ incident to $v$, satisfying the condition that $\phi_h^{-1}(\phi_h(h))=\lbrace h\rbrace$. 

We call an external half-edge $h$ with augmentation \emph{incoming} if $\phi_h(h)=1$ and \emph{outgoing} if $\phi_h(h)=0$
\end{defn}

\begin{rem}
We can concatenate two augmented $\Delta\G$-structured graphs $\Gamma$ and $\Gamma^\prime$ at an outgoing half-edge $h$ of $\Gamma$ and an incoming half-edge $h^\prime$ of $\Gamma^\prime$ to yield a new structured graph. We do this by identifying 1 (in the underlying set of $\epsilon[1]$)  with $h^\prime$ and $0$ with $h$, and taking the new graph to be (in a slight abuse of notation)
\begin{equation*}
\Gamma\overset{\phi_h}{\rightarrow} \epsilon[1] \overset{\phi_{h^\prime}}{\leftarrow} \Gamma^\prime
\end{equation*}
\end{rem}

\begin{defn}
We say that two graphs $\Gamma$ and $\Gamma^\prime$ are \emph{equivalent} if there exists a functor $\phi:I(\Gamma)\to I(\Gamma^\prime)$ such that the induced map $\vert\phi\vert:\vert\Gamma\vert\to\vert\Gamma^\prime\vert$ is a homotopy equivalence.

We say that two $\Delta\G$-structured graphs $\Gamma$ and $\Gamma^\prime$ are equivalent if they are equivalent as graphs via an equivalence $\phi$, and if the pullback morphism $\phi_\ast I_\Gamma^\prime\to I_\Gamma$ admits a lift to a morphism of $\cG$-diagrams  $\tilde{\phi}_\ast \tilde{I}_\Gamma^\prime\to \tilde{I}_\Gamma$

We will sometimes refer to an equivalence of (structured) graphs as a \emph{contraction}
\end{defn}

For any $\Delta\G$-structured graph, there is, in fact, a canonical contraction that contracts a single edge
\begin{equation*}
H(v)\rightarrow \lbrace h,h^\prime\rbrace\leftarrow H(v^\prime)
\end{equation*}
to a single vertex. The functor of incidence categories is obvious, and the pullback lift of the pullback functor is simply given by the cone diagram of the limit of the diagram corresponding to the edge in question. To make sure this is well defined, we need the following.

\begin{lem}
Any diagram in a crossed simplicial group $\Delta\G$ of the form
\begin{equation*}
\xymatrix{
 & [m]\ar[d]^i\\
 [n]\ar[r]_j & [1]
}
\end{equation*}
such that $i$ maps all but one element to $0$ and $j$ maps all but one element to $1$, can be completed to a pullback diagram
\begin{equation*}
\xymatrix{
 [m+n-1]\ar[r]\ar[d] & [m]\ar[d]^i\\
 [n]\ar[r]_j & [1]
}
\end{equation*}
\end{lem}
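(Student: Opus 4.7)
The plan is to reduce to the case $i, j \in \Delta$ via canonical factorization, construct the pullback there explicitly, and then verify it remains a pullback in $\Delta\G$. First, I would apply canonical factorization to write $i = i_0 \circ g_i$ and $j = j_0 \circ g_j$ with $i_0, j_0 \in \Hom_\Delta([m],[1])$ and $g_i \in \G_m$, $g_j \in \G_n$. Since $g_i, g_j$ are isomorphisms in $\Delta\G$, precomposing the legs of any pullback of $(i_0, j_0)$ with $g_i^{-1}$ and $g_j^{-1}$ yields a pullback of $(i, j)$, so it suffices to produce the pullback of $(i_0, j_0)$. Applying $\lambda$ and using that $i_0, j_0$ are monotone forces $i_0$ to send $\{0, \ldots, m-1\}$ to $0$ and $m$ to $1$, and dually for $j_0$.

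Next, I would write down the pullback of $(i_0, j_0)$ in $\Delta$ by hand: the set-theoretic fiber product $\{(x, 0) : 0 \le x < m\} \cup \{(m, y) : 1 \le y \le n\}$ has $m + n$ elements, and ordered lexicographically (first subset before second) it is canonically $[m+n-1]$. Both projections are monotone, and for any $\Delta$-cone $(a, b)$ the obvious set-theoretic map $[k] \to [m+n-1]$ is monotone because $i_0 \circ a = j_0 \circ b$ forces $a$ to take the value $m$ exactly on a terminal segment of $[k]$ (and $b$ to vanish exactly on its complement).

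Finally --- the key step --- I would show this $\Delta$-pullback is still universal in $\Delta\G$. Given a cone $(a, b)$ in $\Delta\G$ with $i_0 \circ a = j_0 \circ b$, apply canonical factorization to write $a = a_0 \circ g_a$ and $b = b_0 \circ g_b$. Since $i_0, j_0, a_0, b_0$ all lie in $\Delta$, the expressions $i_0 \circ a_0 \circ g_a$ and $j_0 \circ b_0 \circ g_b$ are both canonical factorizations of the same morphism $[k] \to [1]$; uniqueness of canonical factorization then forces $g_a = g_b =: g$ together with the $\Delta$-equality $i_0 \circ a_0 = j_0 \circ b_0$. The universal property from Step 2 produces a unique monotone $u_0 : [k] \to [m+n-1]$ factoring $(a_0, b_0)$, and $u := u_0 \circ g$ factors $(a, b)$ in $\Delta\G$; uniqueness of $u$ in $\Delta\G$ follows by re-applying canonical factorization to any competitor.

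The main obstacle is precisely this last reduction: a priori, a cone in $\Delta\G$ need not descend to a cone in $\Delta$, and what makes the argument go through is the ``all but one element'' hypothesis, which ensures the two composites land in $[1]$ and lets uniqueness of canonical factorization force the group components $g_a, g_b$ to agree. Without such a hypothesis, the two legs of a cone in $\Delta\G$ could twist independently and the pullback would generally fail to exist.
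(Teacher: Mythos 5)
Your proof is correct, and its skeleton---rectify the two legs into $\Delta$ via canonical factorization, then show a pullback computed in $\Delta$ remains one in $\Delta\G$---is the same reduction the paper performs. Where you genuinely diverge is the existence step: the paper, having reduced to the cospan $[n]\overset{\psi_0}{\rightarrow}[1]\overset{\psi_m}{\leftarrow}[m]$ in $\Delta$, does not construct its pullback but instead cites Lemma 1.9 of \cite{D} for the existence of such pullbacks in the cyclic category $\Lambda$, and concludes existence in $\Delta$ and hence in every crossed simplicial group; you instead build the $\Delta$-pullback by hand as the lexicographically ordered set-theoretic fiber product, identify it with $[m+n-1]$, and then check universality in $\Delta\G$ directly, using uniqueness of canonical factorization to force the group components $g_a=g_b$ of any $\Delta\G$-cone to coincide. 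Your route is self-contained and, in the last step, rather cleaner than the paper's back-and-forth rewriting of the existence condition; the paper's argument is shorter at the price of an external citation. One small correction to your closing commentary: the equality $g_a=g_b$ needs only that $i_0$ and $j_0$ lie in $\Delta$ with common target, not the ``all but one element'' hypothesis, so any $\Delta$-cospan that has a pullback in $\Delta$ has the same pullback in $\Delta\G$; the hypothesis really enters earlier, in guaranteeing that the rectified cospan is the specific one whose fiber product is a linearly ordered set of size $m+n$, so that the pullback exists in $\Delta$ at all (for a general cospan it need not---e.g.\ the cospan $[0]\to[1]\leftarrow[0]$ given by the two distinct vertices of $[1]$ admits no cone whatsoever).
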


\begin{proof}
We first observe that this can be reduced to a statement about diagrams in $\Delta$. To see this, for the bottom maps to satisfy the requisite conditions, the must be given by
\begin{eqnarray*}
i & = & \psi_0\circ g \\
j & = & \psi_m\circ h
\end{eqnarray*}
respectively. Here $\psi_0$ is the map in $\Delta$ sending everything except $0$ to $1$ and $\psi_m$ is the map in $\Delta$ sending everything except $m$ to $0$. 
This means that we can take the existence condition
\begin{equation*}
\xymatrix{
 [k]\ar@/^/[drr]^{z} \ar@/_/[ddr]_{\ell}\ar@{-->}[dr]^{\exists !} & & \\
  & [m+n-1]\ar[r]_p \ar[d]_q & [m]\ar[d]^i\\
  & [n]\ar[r]_j & [1]
}
\end{equation*}
and replace it with
\begin{equation*}
\xymatrix{
 [k]\ar@/^/[drr]^{h\circ z} \ar@/_/[ddr]_{g\circ \ell}\ar@{-->}[dr]^{\exists !} & & \\
  & [m+n-1]\ar[r]_{h\circ p} \ar[d]_{g\circ q} & [m]\ar[d]^{\psi_m}\\
  & [n]\ar[r]_{\psi_0} & [1]
}
\end{equation*}

However, by Canonical Factorization, we see that we can rewrite $g\circ q=\theta\circ g^\prime$ and $h\circ p=h^\prime \circ \gamma$. The central square commuting means that $g^\prime= h^\prime$ and $\psi_0\circ \theta=\psi_m\circ \gamma$. Hence, we see that the condition above is equivalent to the condition that 
\begin{equation*}
\xymatrix{
 [k]\ar@/^/[drr]^{h\circ z} \ar@/_/[ddr]_{g\circ \ell}\ar@{-->}[dr]^{\exists !} & & \\
  & [m+n-1]\ar[r]_{\gamma} \ar[d]_{\theta} & [m]\ar[d]^{\psi_m}\\
  & [n]\ar[r]_{\psi_0} & [1]
}
\end{equation*}

Similarly, taking the canonical factorizations $h\circ z=\beta\circ t$ and $g\circ \ell=\alpha \circ s$, we see that commutativity means that $s=t$ and the $\Delta$-morphisms commute. Hence, replacing $[k]$ through the isomorphism $t$, we see that this is equivalent to the existence condition
\begin{equation*}
\xymatrix{
 [k]\ar@/^/[drr]^{\beta} \ar@/_/[ddr]_{\alpha}\ar@{-->}[dr]^{\exists !} & & \\
  & [m+n-1]\ar[r]_{\gamma} \ar[d]_{\theta} & [m]\ar[d]^{\psi_m}\\
  & [n]\ar[r]_{\psi_0} & [1]
}
\end{equation*}
However, this means that the map whose existence we are asserting must also be a morphism in $\Delta$. ie, diagrams of the form above admit such a pullback in $\Delta\G$ if and only if diagrams of the form 
\begin{equation*}
[n]\overset{\psi_0}{\rightarrow} [1] \overset{\psi_m}{\leftarrow} [m]
\end{equation*}
admit a pullback in $\Delta$. However, we know that there is a crossed simplicial group, $\Lambda$, in which such a pullback exists (see \cite{D} Lemma 1.9). Therefore, it exists in any crossed simplicial group.
\end{proof}

\begin{rem}
A similar argument holds for the case where one or both of $[m]$ and $[n]$ are $[0]$. The pullback in $\Delta$ can be computed explicitly, and is again equivalent to the pullback in $\mathbf{FSet}$. 
\end{rem}

\begin{cor}
Any diagram of $\Delta\G$-structured sets
\begin{equation*}
H(v)\rightarrow \lbrace h,h^\prime\rbrace\leftarrow H(v^\prime)
\end{equation*}
admits a limit in $\cG$ whose underlying set is given by $\left( H(v)\smallsetminus\lbrace e\rbrace\right)\cup\left( H(v^\prime)\smallsetminus\lbrace e\rbrace\right)$
\end{cor}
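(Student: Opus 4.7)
The plan is to reduce the statement to the pullback existence in $\Delta\G$ established by the preceding lemma, exploiting the fact that the functor $\epsilon: \Delta\G \to \cG$ is an equivalence of categories (as noted in the remark following the definition of morphisms of structured sets). First I would choose structured frames for each of the three structured sets $H(v)$, $H(v')$, and $\lbrace h,h'\rbrace$, trivializing their torsors and identifying them with $\epsilon[n]$, $\epsilon[m]$, and $\epsilon[1]$, where $\vert H(v)\vert=n+1$ and $\vert H(v')\vert=m+1$. Under these identifications, the given diagram in $\cG$ becomes a diagram of the form $\epsilon[n] \to \epsilon[1] \leftarrow \epsilon[m]$, which by full faithfulness of $\epsilon$ corresponds to a unique diagram $[n] \to [1] \leftarrow [m]$ in $\Delta\G$.

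Next I would verify that this diagram satisfies the hypothesis of the lemma. By construction of the incidence diagram, the morphism $H(v) \to \lbrace h,h'\rbrace$ sends $h$ to itself and every other half-edge to $h'$, and symmetrically for the second map. After passing through $\lambda_\cG$ and identifying $\lbrace h,h'\rbrace$ with $\lbrace 0,1\rbrace$ via the chosen frame, these become set maps that send all but one element to a single point of $\lbrace 0,1\rbrace$, with the two maps picking out opposite points. This is exactly the hypothesis of the lemma, so the pullback $[m+n-1]$ exists in $\Delta\G$.

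Since $\epsilon$ is an equivalence of categories, it preserves and reflects limits, and thus $\epsilon[m+n-1]$ is a limit of the original diagram in $\cG$. Its underlying finite set has cardinality $m+n$, which matches the cardinality of $(H(v)\smallsetminus\lbrace h\rbrace)\cup(H(v')\smallsetminus\lbrace h'\rbrace)$. To make this identification canonical, I would invoke the observation within the proof of the lemma that the pullback in $\Delta\G$ is constructed from a pullback in $\Delta$, which in turn coincides with the pullback in $\mathbf{FSet}$ — and the latter is precisely the union of the two sets with their distinguished preimages identified.

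The main obstacle I anticipate is the bookkeeping around the choice of trivializations: an arbitrary choice of frames typically realizes the diagram maps as $\psi_0 \circ g$ and $\psi_m \circ g'$ with nontrivial group elements $g$ and $g'$, rather than as the clean maps $\psi_0$ and $\psi_m$ themselves. This issue, however, is already handled inside the proof of the lemma via canonical factorization, so no new argument is needed; and the independence of the resulting limit from the choice of frames is automatic from the universal property.
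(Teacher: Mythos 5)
Your argument is correct and is exactly the route the paper intends (the corollary is left without an explicit proof there): trivialize the torsors to transfer the diagram along the equivalence $\epsilon:\Delta\G\to\cG$, check that the incidence maps collapse all but one element to opposite points of $\{h,h'\}$ so the preceding lemma applies, and read off the underlying set from the pullback in $\Delta$, which agrees with the fiber product in $\mathbf{FSet}$. The only small caveat is the degenerate situation where $v$ or $v'$ is univalent, so that the corresponding object is $[0]$; this is covered by the remark following the lemma rather than by the lemma itself, and should be cited accordingly.
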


\subsection{$\Delta\G$-structured trees and operads}

Given an augmented $\Delta\G$-structured graph $\Gamma$, our notion of contraction always allows us to find an equivalent $\Delta\G$-structured graph with a single vertex. This and the concatenation operation sketched above, suggest that it may be possible to extract an operad defined by $\Delta\G$ from the notion of $\Delta\G$-structured graphs. 

\begin{defn}
We call an augmented $\Delta\G$-structured graph $\Gamma$ a ($\Delta\G$-structured augmented) tree if $\vert\Gamma\vert$ is simply connected. Let $T_{\Delta\G}(n)$ denote the category of $\Delta\G$-structured augmented trees with external half-edges labelled by the numbers $\lbrace 0,1,\ldots,n\rbrace$, such that the edge $0$ is outgoing, and all other edges are incoming. The morphisms are given by contractions. Letting $\mathcal{T}_{\Delta\G}(n)$ be the groupoid completion of this collection, we define 
\begin{equation*}
P_{\Delta\G}(n)=\pi_0 \mathcal{T}_{\Delta\G}(n)
\end{equation*}
\end{defn}

We then have the following results (\cite{DK} propositions 4.16 and 4.18)

\begin{prop}\label{prop:monoid}
The monoid $P_{\Delta\G}(1)$ can be canonically identified with the group $\G_1^0$.
\end{prop}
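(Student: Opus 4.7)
The plan is to construct a canonical map $\Phi: P_{\Delta\G}(1) \to \G_1^0$, verify it is bijective, and verify it intertwines the monoid structure on $P_{\Delta\G}(1)$ (given by operadic concatenation of trees along the outgoing/incoming half-edges) with the group multiplication on $\G_1^0$; since the target is a group, the last point then automatically promotes $P_{\Delta\G}(1)$ to a group. First I reduce to single-vertex representatives. A tree $T \in T_{\Delta\G}(1)$ has exactly two external half-edges: $h_0$ (outgoing, labelled $0$) and $h_1$ (incoming, labelled $1$). Iteratively applying the canonical edge contraction, whose existence in $\cG$ is guaranteed by the pullback corollary just established, produces an equivalent single-vertex tree $T^{\mathrm{red}}$ equipped with a $\Delta\G$-structured set $H(v) = \{h_0, h_1\}$ and two augmentations $\phi_{h_0}, \phi_{h_1}: H(v) \to \epsilon[1]$.

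To define $\Phi$, observe that the external-edge condition $\phi_h^{-1}(\phi_h(h)) = \{h\}$ forces $\phi_{h_0}$ and $\phi_{h_1}$ to share the same underlying map of sets, namely $h_0 \mapsto 0$, $h_1 \mapsto 1$. Hence the composite $\phi_{h_1} \circ \phi_{h_0}^{-1} \in \Aut_{\cG}(\epsilon[1]) \cong \G_1$ has trivial image under $\lambda_\cG$ and therefore lies in $\ker(\lambda_1) = \G_1^0$. I set
\begin{equation*}
\Phi([T^{\mathrm{red}}]) := \phi_{h_1} \circ \phi_{h_0}^{-1}.
\end{equation*}
Independence of the choice of reduction is automatic: any morphism of augmented structured trees preserves the augmentations on the nose, so the composite on the right is invariant under the isomorphisms in $\mathcal{T}_{\Delta\G}(1)$. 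Surjectivity of $\Phi$ is immediate, since for $g \in \G_1^0$ the single-vertex tree with $\phi_{h_0} = \mathrm{id}_{\epsilon[1]}$ and $\phi_{h_1} = g$ maps to $g$. Injectivity follows by using $\phi_{h_0}$ to pick out a distinguished frame of $H(v)$, after which the structured sets and augmentations of two trees with equal $\Phi$-values can be matched up.

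For the monoid structure, I concatenate single-vertex representatives $T_a$ and $T_b$ at the outgoing half-edge of $T_a$ and the incoming half-edge of $T_b$ to form a two-vertex tree, and then apply the canonical contraction to its single internal edge. The pullback in $\cG$ that defines this contraction (constructed explicitly in the proof of the previous lemma) determines how the two sets of augmentations combine on the contracted tree, yielding the identity $\Phi(T_a \circ T_b) = \Phi(T_a) \cdot \Phi(T_b)$. The main obstacle lies in this last step: one must trace the cone data of the pullback through the canonical factorization in $\Delta\G$ in order to read off explicit formulas for the two resulting augmentations and verify that their ratio is precisely the product in $\G_1^0$. The remaining parts of the argument reduce to unwinding the definitions of $\Delta\G$-structured graphs, their morphisms, and the equivariance conditions built into $\cG$.
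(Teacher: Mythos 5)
Your proposal is correct and follows essentially the same route as the paper: reduce to a single-vertex representative, use one augmentation to identify $H(v)$ with $\epsilon[1]$, read off the other augmentation as an element of $\G_1^0$, and check that concatenation followed by contraction matches the group law. The only differences are cosmetic: you trivialize via the outgoing rather than the incoming augmentation (giving the inverse identification), and the multiplicativity step you flag as the remaining obstacle is exactly what the paper handles in one line by observing that the concatenated graph $[1]\overset{id}{\leftarrow}[1]\overset{g}{\rightarrow}[1]\overset{id}{\leftarrow}[1]\overset{h}{\rightarrow}[1]$ contracts to $[1]\overset{id}{\leftarrow}[1]\overset{hg}{\rightarrow}[1]$.
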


\begin{proof}
We can represent an equivalence class in $P_{\Delta\G}(1)$ by an augmented graph of the form
\begin{equation*}
[1]\overset{\phi_e}{\leftarrow} H(v) \overset{\phi_f}{\rightarrow} [1]
\end{equation*}
with morphisms $\phi_e$ (incoming) and $\phi_f$ (outgoing). We can then identify $H(v)$ with $[1]$ via the isomorphism $\phi_e$, giving an equivalent graph of the form 
\begin{equation*}
[1]\overset{id}{\leftarrow} [1] \overset{g}{\rightarrow} [1]
\end{equation*}
where the set map underlying $g$ is the identity. This means precisely that $g\in\G_1^0$. 

To see compatibility with the group law, we notice that composing two such intervals, we get
\begin{equation*}
[1]\overset{id}{\leftarrow} [1] \overset{g}{\rightarrow} [1]\overset{id}{\leftarrow} [1] \overset{h}{\rightarrow} [1]
\end{equation*}
which admits a contraction to 
\begin{equation*}
[1]\overset{id}{\leftarrow} [1] \overset{hg}{\rightarrow} [1]
\end{equation*}
So that composition and contraction gives the same result as multiplication.
\end{proof}

\begin{prop}
For any crossed simplicial group $\Delta\G$, the action of $\Sigma_n$ on the incoming half-edges by relabeling, along with the maps
\begin{equation*}
P_{\Delta\G}(n)\times P_{\Delta\G}(a_1)\cdots\times P_{\Delta\G}(a_n)\to P_{\Delta\G}(\sum a_i)
\end{equation*}
gives $P_{\Delta\G}:=\left\lbrace P_{\Delta\G}(n)\right\rbrace_{n\geq1}$ the structure of an operad in the category of sets.
\end{prop}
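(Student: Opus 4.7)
The plan is to verify the four operad axioms in turn: well-definedness of composition on equivalence classes, the unit axioms, associativity, and equivariance. Most of the geometric content is contained in the first step; the remaining axioms then reduce to bookkeeping on structured trees.

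First, I would make composition precise on representatives. Given a tree $\Gamma \in T_{\Delta\G}(n)$ and trees $\Gamma_i \in T_{\Delta\G}(a_i)$, I form the composite by grafting the outgoing edge of each $\Gamma_i$ onto the $i$-th incoming edge of $\Gamma$ via the concatenation operation for augmented $\Delta\G$-structured graphs. The result is simply connected (gluing a tree to a tree along a single external edge produces a tree), its external half-edges inherit a canonical labelling by $\lbrace 0, 1, \ldots, \sum a_i\rbrace$ with $0$ the outgoing edge of $\Gamma$, and its $\Delta\G$-structure exists because the concatenation is built from the structured pullbacks of the form $H(v) \to \lbrace h,h'\rbrace \leftarrow H(v')$ constructed in the corollary preceding Section 2.2. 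The first point to verify is that this descends to $\pi_0 \mathcal{T}_{\Delta\G}(\sum a_i)$: any contraction of $\Gamma$ or of a $\Gamma_i$ can be performed locally within the composite away from the grafted half-edges, yielding a contraction of the composite, so the operation descends.

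Next, I would identify the unit. By Proposition~\ref{prop:monoid}, $P_{\Delta\G}(1) \cong \G_1^0$, and the operadic unit is the class of $e \in \G_1^0$, represented by $[1] \overset{\mathrm{id}}{\leftarrow} [1] \overset{\mathrm{id}}{\rightarrow} [1]$. Grafting this trivial tree onto each incoming edge of $\Gamma$ produces a tree with an extra bivalent vertex on every input; the canonical edge contractions at those vertices recover $\Gamma$, verifying the right unit axiom, and the same calculation with roles reversed gives the left unit axiom. For associativity, the two composites $(\Gamma \circ (\Gamma_i)) \circ (\Gamma_{i,j})$ and $\Gamma \circ (\Gamma_i \circ (\Gamma_{i,j}))$ are obtained from the same underlying forest by performing all outgoing-to-incoming identifications; because grafting is a pointwise structured pushout and pushouts along disjoint pairs of edges commute, the two iterated constructions yield the \emph{same} augmented structured tree, not merely equivalent ones. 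Equivariance with respect to $\Sigma_n$ and the $\Sigma_{a_i}$ amounts to the statement that relabelling inputs commutes with grafting, which is immediate from the combinatorial definition of the labels on the composite.

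The main obstacle will be well-definedness on $\pi_0$. One must check not only that contractions of a summand extend to the composite, but also that different choices made in the grafting procedure---for example twisting an augmentation $\phi_h$ by an element of $\G_1^0$, which is the ambiguity present in the identification of the pushed-out frames---produce composites in the same equivalence class. Establishing this amounts to observing that any such twist can be absorbed into a contraction at the grafting vertex, using again the structured pullback lemma. Once this is in place, all remaining axioms are routine manipulations, and the operad structure on $P_{\Delta\G}$ follows.
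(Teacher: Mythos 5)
The paper offers no proof of this proposition at all: it is quoted from \cite{DK} (Propositions 4.16 and 4.18), so there is no internal argument to compare yours against. Your verification is the standard one and, as far as I can see, it is sound: composition by grafting the outgoing edge of each $\Gamma_i$ onto the $i$-th input of $\Gamma$ via the concatenation of augmented structured graphs, descent to $\pi_0\mathcal{T}_{\Delta\G}$ because a contraction of any factor extends (away from, or across, the grafted edge) to a contraction of the composite, the unit being the class of the identity in $\G_1^0\cong P_{\Delta\G}(1)$ (via Proposition \ref{prop:monoid}), which disappears after contracting the bivalent grafting vertex, and associativity and equivariance from the fact that graftings at disjoint external half-edges literally commute and are compatible with relabelling. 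One small clarification: the ``twist by an element of $\G_1^0$'' you raise as a possible ambiguity in the last paragraph is not actually a choice present in the construction --- the concatenation $\Gamma\to\epsilon[1]\leftarrow\Gamma'$ is canonically determined by the two augmentation maps, with no identification of frames left to choose --- so the only genuine content of well-definedness is the point you already address, namely that equivalences of the factors (which, in $T_{\Delta\G}(n)$, are required to respect the augmentations) induce equivalences of the composite.
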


\begin{exmp}
Consider the operad $P_{\Delta}$ associated to the trivial crossed simplicial group. The only morphisms in $\Hom_{\Delta}([n],[1])$ which could be used to define augmentations are 
\begin{enumerate}
\item The morphism $\psi$ given by collapsing everything except $n$ to $0$.
\item The morphism $\phi$ given by collapsing everything except $0$ to $1$.
\end{enumerate}
As a result, the only non-zero component of the operad $P_\Delta$ is $P_\Delta(1)$, and it contains precisely one member. 

The algebras over this operad (in a category $\mathcal{C}$) are precisely equivalent to objects in $\mathcal{C}$. The operad itself is the initial object in the category of operads in $\mathbf{FSet}$.
\end{exmp}

\begin{exmp}
If we take $P_\Lambda$, the operad associated with the cyclic category, then we have a more interesting structure. As we will see in the next section, $P_\Lambda$ is precisely the associative operad.
\end{exmp}

\section{Planar and Balanced Crossed Simplicial Groups}
\subsection{Planar Crossed Simplicial Groups and Topological Motivation}

A useful class of crossed simplicial groups is provided by the study of surfaces equipped with a reduction of the structure group. 
\begin{defn}
A \emph{connective covering} of a Lie group $G$ is a morphisms of Lie groups 
\begin{equation*}
p:\tilde{G}\to G
\end{equation*}
such that $p$ is a covering of its image, and the pre-image of the compnent of the identity is connected.
\end{defn}

\begin{defn}
A \emph{Planar Lie Group} is a connective covering of $O(2)$.  
\end{defn}

As it turns out, there is a direct correspondence between a certain class of crossed simplicial groups and planar Lie groups (listed in figure 1).

\begin{defn}
A crossed simplicial group $\Delta G$ is called \emph{planar} if it corresponds to a planar Lie group $G$.
\end{defn}

\begin{figure}[h!]
\begin{tabular}{|c | c | c |}
\hline
CSG & 
Groups $\G_n$ &
Planar Lie Group $G$ \\ \hline \hline
Cyclic category $\Lambda$ &
$\Z/(n+1)\Z$ &
$SO(2)$ \\ \hline
Dihedral Category $\Xi$ &
$D_{n+1}$ &
$O(2)$\\ \hline
Paracyclic Category $\Lambda_\infty$ & $\Z$ & $\tilde{SO}(2)$ \\ \hline
Paradihedral Category $\Xi_\infty$ & $D_\infty$ & $\tilde{O}(2)$\\ \hline
N-cyclic Category $\Lambda_N$ & $\Z/N(n+1)\Z$ & $\operatorname{Spin}(2)_N$ \\ \hline
N-dihedral Category $\Xi_N$ & $D_{N(n+1)}$ & $\operatorname{Pin}_N^+ (2)$ \\ \hline
M-Quaternionic Category $\nabla_M$ & $Q_{M(n+1)}$ & $\operatorname{Pin}_{2M}^- (2)$\\ \hline
\end{tabular}
\caption{Correspondence between crossed simplicial groups and planar Lie groups. Further exposition can be found in \cite{DK} section 1.4.}
\end{figure}

We can better explain this correspondence by citing a result of Fiedorowicz and Loday \cite{FL}.

\begin{prop}
The geometric realization of the simplicial set $\G_\bullet$ underlying a crossed simplicial group $\Delta\G$ has the structure of a topological group $G=\vert \G \vert$. Moreover, if $\Delta\G$ is planar, then $G$ is the planar Lie group corresponding to $\Delta\G$ in figure 1.
\end{prop}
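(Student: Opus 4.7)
The plan is in three steps: first make the simplicial set structure on $\G_\bullet$ precise, then construct a natural continuous multiplication on $|\G_\bullet|$, and finally identify this topological group with the prescribed planar Lie group in each of the planar cases.

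First, I would describe the simplicial structure on $\G_\bullet$ explicitly. For $\phi:[n]\to[m]$ in $\Delta$ and $g\in\G_m$, the induced map $\phi^\ast:\G_m\to\G_n$ is the one produced by canonical factorization of $g\circ\phi$. The simplicial identities follow formally from the uniqueness of canonical factorization. Importantly, these are maps of sets and generally not group homomorphisms, so $\G_\bullet$ is not a simplicial group in the classical sense; hence $|\G_\bullet|$ will not be a topological group for any trivial reason, and a substantial construction is required.

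Next, I would construct a multiplication $\mu:|\G_\bullet|\times|\G_\bullet|\to|\G_\bullet|$. Given representative pairs $(g,x)\in\G_n\times\Delta^n$ and $(h,y)\in\G_m\times\Delta^m$, the idea is to lift $g$ and $h$ along the homomorphisms $\omega_k^\ast$ (combined with shuffle-like subdivisions of the product of simplices) to elements of a common group $\G_k$, and then multiply there using the group law. Well-definedness of the induced map on $|\G_\bullet|$ is the crucial point: the image of a class must be independent of the choice of lift, and this reduces, via canonical factorization, to tracking the controlled failure of the maps $\phi^\ast$ to respect group multiplication. Continuity then follows from the piecewise-polynomial nature of the construction, associativity from associativity in each $\G_k$, and inverses from combining the inverse in each $\G_n$ with units transported from $\G_0$ via $\omega_n^\ast$.

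Finally, for the planar case, I would identify $|\G_\bullet|$ with the corresponding planar Lie group in Figure 1 by direct computation, case by case. For $\Lambda$, the simplicial set $[n]\mapsto\Z/(n+1)\Z$ with the cyclic face and degeneracy maps is the classical simplicial model of $S^1$, and the group structure above recovers $SO(2)$. For $\Xi$, the canonical parity splits $|\G_\bullet|$ into two circles, yielding $O(2)$. The paracyclic $\Lambda_\infty$ and paradihedral $\Xi_\infty$ cases give the universal covers $\widetilde{SO}(2)\cong\R$ and $\widetilde{O}(2)$, while the $N$-fold variants $\Lambda_N,\Xi_N$ and the quaternionic family $\nabla_M$ fit into the tower of intermediate covers with the specified spin and pin structure. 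In each instance the topological group structure inherited from the construction above matches the native Lie group structure on the target.

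The principal obstacle is step two: producing the continuous multiplication on $|\G_\bullet|$ when $\G_\bullet$ is not a simplicial group. Essentially all of the subtlety is concentrated in verifying that the levelwise multiplication descends to the quotient defining the geometric realization, which demands a careful combinatorial argument built on the canonical factorization. Once this is in hand, associativity, continuity, and the planar identifications are comparatively direct.
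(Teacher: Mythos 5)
Note first that the paper offers no proof of this proposition at all: it is quoted from Fiedorowicz--Loday \cite{FL}, so your sketch has to be measured against what a complete argument actually requires rather than against an internal proof. Your first step is fine, and your diagnosis that all the content sits in constructing the multiplication is also correct; the problem is that the mechanism you propose for that step does not work as described, and the point where it breaks is exactly the point you defer. The only homomorphic transfer maps available are $\omega_n^\ast:\G_0\to\G_n$, which move elements \emph{out of $\G_0$ only}; they cannot be used to lift a general pair $g\in\G_n$, $h\in\G_m$ to a common level. The maps that do move simplices across levels are the operators $\alpha^\ast:\G_m\to\G_n$ attached to $\alpha\in\Hom_\Delta([n],[m])$, and these are not homomorphisms: canonical factorization gives $\alpha^\ast(gh)=\beta^\ast(g)\cdot\alpha^\ast(h)$, where $\beta$ is the $\Delta$-component of the factorization of $h\circ\alpha$, so one of the two factors is transferred along a \emph{different} simplicial operator, depending on $h$ (through $\lambda_m(h)$ and more). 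Since the realization $\vert\G_\bullet\vert$ is built from the identifications $(\alpha^\ast g,x)\sim(g,\alpha_\ast x)$, a ``multiply representatives at a common level'' prescription is precisely what fails to descend: the twist above means two representatives of the same class can have products landing in different classes, and ``tracking the controlled failure'' is not a routine verification but the entire theorem. As stated, your step two is therefore a genuine gap, and your step three inherits it, since identifying $\vert\Lambda_\bullet\vert$ with $SO(2)$ as a \emph{group} (not merely as a space homeomorphic to $S^1$) presupposes the construction you have not given.

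A viable route, and the one consistent with the tools this paper itself invokes later, goes through the Fiedorowicz--Loday structure theorem quoted in Section 3: every crossed simplicial group sits in an extension $\Delta\G'\to\Delta\G\to\Delta\G''$ with $\G'_\bullet$ an honest simplicial group and $\Delta\G''$ one of the fundamental crossed simplicial subgroups of $\Delta\W$. For $\G'_\bullet$ the realization is a topological group for the classical reason (realization preserves finite products), the realizations of the fundamental pieces are identified directly ($\vert\Lambda_\bullet\vert\cong SO(2)$, $\vert\Xi_\bullet\vert\cong O(2)$, etc.), and one then shows that realization carries the levelwise extension to an extension of topological groups. For planar $\Delta\G$ the kernel is a constant simplicial group ($1$, $\Z/N$, or $\Z$), so $\vert\G_\bullet\vert$ is the corresponding connective covering of $SO(2)$ or $O(2)$, which is exactly how the table in Figure 1 is read off. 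If you want to keep a hands-on construction of the multiplication instead, you must at minimum explain how the $\lambda$-twist in the failure of $\alpha^\ast$ to be multiplicative cancels against the simplicial identifications in the realization; nothing in your current outline addresses that.
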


As it happens, planar crossed simplicial groups are also reflexive. We can see this by appealing to an alternate topological model for such groups. 

\begin{prop}
Let $\operatorname{Con}(G)$ be the category of connective coverings of a topological group $G$. Then a homotopy equivalence $G\sim K$ gives rise to an equivalence of categories $\operatorname{Con}(G)\cong\operatorname{Con}(K)$. 
\end{prop}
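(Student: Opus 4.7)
The plan is to prove the proposition by giving a description of $\operatorname{Con}(G)$ purely in terms of the pointed homotopy type of $G$ equipped with its $H$-space structure; the statement then follows because a homotopy equivalence $f:G\to K$ of topological groups automatically preserves all such invariants.

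First, I would show that any connective covering $p:\tilde G\to G$ decomposes naturally as follows. Let $H:=p(\tilde G)$. The hypothesis that $p^{-1}(G^0)=\tilde G^0$ is connected forces $H$ to be an open subgroup of $G$ containing $G^0$, and $\pi_0(p):\pi_0(\tilde G)\to \pi_0(H)$ is then a group isomorphism. The restriction $p|_{\tilde G^0}:\tilde G^0\to G^0$ is a connected covering of topological groups, classified by a subgroup $N\leq \pi_1(G^0)$. I would then check that $\tilde G\to G$ is canonically recovered as a pullback of topological groups from the data of the open subgroup $H\subseteq G$ together with the covering $\tilde G^0\to G^0$, so that a connective covering is rigidly determined by these two pieces of information.

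Second, this yields a classification of objects of $\operatorname{Con}(G)$ by pairs $(\bar H, N)$ with $\bar H\leq \pi_0(G)$ and $N\leq \pi_1(G^0)$ stable under the conjugation action of $\bar H$, together with a corresponding description of morphisms in terms of inclusions of such pairs (using the rigidity of covering transformations on identity components to ensure that morphisms are determined by their effect on the classifying data). Applying this description to $G$ and $K$ separately and transporting pairs along the canonical group isomorphisms $\pi_0(f):\pi_0(G)\to\pi_0(K)$ and $\pi_1(f):\pi_1(G^0)\to\pi_1(K^0)$ induced by $f$ then produces the desired equivalence $\operatorname{Con}(G)\cong\operatorname{Con}(K)$.

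The main obstacle is the pullback description in step one: one must verify that the extension gluing $\tilde G^0$ to $\pi_0(H)$ to form $\tilde G$ carries no extra twist beyond what is dictated by the requirement that $p$ be a covering group homomorphism over $H$. Once this rigidity is established, the remaining content is essentially classical covering space theory translated into the language of $H$-spaces, where all invariants (including the $\pi_0$-action on $\pi_1$ and the underlying subgroup lattices) are manifestly preserved by any homotopy equivalence. A minor technical point is arranging for the homotopy equivalence to be basepoint-preserving, which can be done by an adjustment up to homotopy before applying $\pi_0$ and $\pi_1$.
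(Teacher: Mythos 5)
Your plan hinges on the ``rigidity'' claim in step one --- that a connective covering $p:\tilde G\to G$ is determined up to isomorphism over $G$ by the pair $(\bar H,N)$, where $\bar H\le\pi_0(G)$ is the image in $\pi_0$ and $N\le\pi_1(G^0)$ classifies the restriction $\tilde G^0\to G^0$ --- and that claim is false; it is exactly where the real content of the proposition sits. Take $G=O(2)$. The two double covers $\operatorname{Pin}^+(2)\to O(2)$ and $\operatorname{Pin}^-(2)\to O(2)$ are both connective coverings with image all of $O(2)$ and with the \emph{same} restriction over the identity component, namely the connected double cover $\operatorname{Spin}(2)\to SO(2)$, yet they are not isomorphic over $O(2)$: lifts of reflections have order $2$ in $\operatorname{Pin}^+(2)$ but order $4$ in $\operatorname{Pin}^-(2)$. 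The paper's Figure 1 already exhibits this, listing both the $\operatorname{Pin}_N^+(2)$ and the $\operatorname{Pin}_{2M}^-(2)$ families as distinct connective coverings of $O(2)$. The point is that gluing $\tilde G^0$ to $\pi_0(H)$ carries a genuine extension datum (when lifts of the extension $1\to G^0\to H\to\pi_0(H)\to1$ exist, they form a torsor under a group-cohomology group of $\pi_0(H)$ with coefficients in the deck group $\pi_1(G^0)/N$; for $O(2)$ this is $H^2(\Z/2,\Z/2)\cong\Z/2$, accounting for $\operatorname{Pin}^\pm$). So your classification of objects is too coarse, the transported ``equivalence'' is not even well defined, and the heart of the proposition --- showing that this extra twist is itself an invariant of the homotopy type of $G$ (e.g.\ via the $2$-type of $BG$ and its $k$-invariant) and transports along the equivalence --- is never addressed.

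Two further remarks. First, for $\pi_0(f)$ to be a group isomorphism and for the conjugation action of $\pi_0$ on $\pi_1$ (and the extension data above) to be preserved, the homotopy equivalence must be compatible with the multiplications; in the application the paper needs, $O(2)\hookrightarrow GL(2,\R)\to\Homeo(S^1)$ are homomorphisms of topological groups that happen to be homotopy equivalences, and this is the hypothesis you should use --- a basepoint adjustment of an unstructured homotopy equivalence does not give multiplicativity. Second, note that the paper itself gives no argument but cites \cite{DK} (Proposition 1.30), where the proof is organized precisely so that \emph{all} of the classifying data of a connective covering, including the extension/$k$-invariant part that distinguishes $\operatorname{Pin}^+$ from $\operatorname{Pin}^-$, is visibly homotopy invariant; any repair of your argument would have to incorporate that layer.
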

\begin{proof}
(\cite{DK}, proposition 1.30)
\end{proof}

\begin{cor}
A planar Lie group $G$ uniquely corresponds to a connective covering 
\begin{equation*}
p_{Homeo}: \Homeo^G(S^1)\to \Homeo(S^1)
\end{equation*}
and a connective covering
\begin{equation*}
p_\mathbb{G}: \mathbb{G}\to GL(2,\R)
\end{equation*}
via the homotopy equivalences 
\begin{equation*}
O(2)\to GL(\R,2)\to \Homeo(S^1)
\end{equation*}
\end{cor}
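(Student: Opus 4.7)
The plan is to view this corollary as a direct application of the previous proposition to the chain of homotopy equivalences
\begin{equation*}
O(2)\hookrightarrow GL(2,\R)\to \Homeo(S^1),
\end{equation*}
transporting the planar Lie group $G$, which by definition is an object of $\operatorname{Con}(O(2))$, along the induced equivalences of categories.

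First I would record that both arrows above are homotopy equivalences of topological groups. The inclusion $O(2)\hookrightarrow GL(2,\R)$ is the standard deformation retraction obtained from the Gram--Schmidt / polar decomposition, so the unitary part is a strong deformation retract. For $GL(2,\R)\to \Homeo(S^1)$, the map is given by the induced action on the unit circle (equivalently, on rays from the origin); both spaces are homotopy equivalent to $O(2)$ (indeed, $\Homeo(S^1)$ deformation retracts onto the subgroup of rigid rotations and reflections), and the composite $O(2)\to \Homeo(S^1)$ is the standard inclusion, which is already a homotopy equivalence. Hence the full composite and each factor are homotopy equivalences.

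Next I would apply the preceding proposition twice to obtain a commutative (up to canonical natural isomorphism) diagram of equivalences of categories
\begin{equation*}
\operatorname{Con}(O(2)) \xrightarrow{\;\simeq\;} \operatorname{Con}(GL(2,\R)) \xrightarrow{\;\simeq\;} \operatorname{Con}(\Homeo(S^1)).
\end{equation*}
Starting from the object $(G\to O(2))\in \operatorname{Con}(O(2))$ corresponding to the planar Lie group $G$, the image in $\operatorname{Con}(GL(2,\R))$ is a connective covering which we define to be $p_{\mathbb G}\colon \mathbb G\to GL(2,\R)$, and the image in $\operatorname{Con}(\Homeo(S^1))$ is defined to be $p_{\Homeo}\colon \Homeo^G(S^1)\to \Homeo(S^1)$. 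Uniqueness up to unique isomorphism is immediate from these being equivalences of categories, which is exactly the content of the word ``uniquely'' in the statement.

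The only real content beyond a formal invocation of the proposition is confirming that the three groups in the chain are genuinely homotopy equivalent \emph{as topological groups} in the sense required by the cited proposition, so that the equivalence transports connective coverings (a property that involves connectedness of the preimage of the identity component). This is the step I would flag as the main obstacle, but it is well known: $O(2)$, $GL(2,\R)$, and $\Homeo(S^1)$ all have exactly two connected components and their identity components are homotopy equivalent to $S^1$, and the maps above are compatible with these decompositions, so the equivalences of categories preserve and reflect the connectivity condition on coverings.
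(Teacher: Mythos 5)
Your proposal is correct and matches the paper's approach: the paper treats this corollary as an immediate consequence of the preceding proposition (the equivalence $\operatorname{Con}(G)\cong\operatorname{Con}(K)$ from \cite{DK}), applied to the standard homotopy equivalences $O(2)\to GL(2,\R)\to \Homeo(S^1)$, exactly as you do. Your extra verification that these maps are homotopy equivalences of topological groups compatible with the component structure is the same implicit content the paper relies on, so there is nothing genuinely different to compare.
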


Using this corollary, we can find an alternative model for $\Delta\G$-structured sets.

\begin{defn}
We define a \emph{Marked Circle} to be a pair $(C,J)$ where $C$ is a topological space homeomorphic to $S^1$ and $J$ is a closed subset of $C$ which is homeomorphic to a disjoint union of a finite number of intervals. 

A \emph{morphism} of marked circles from $(C,J)$ to $(C^\prime,J^\prime)$ is an element of the set
\begin{equation*}
\Homeo ((C,J),(C^\prime,J^\prime))=\left\lbrace \phi\in\Homeo(C,C^\prime) \big\vert \phi(J)\subset(J^\prime) \right\rbrace
\end{equation*}

A \emph{$G$-Structured Marked Circle} is a circle $C$ equipped with a marking $J$ 
and a reduction of the structure group $\rho:F\to \Homeo(S^1,C)$ 
equivariant along $p_{\Homeo}$. 
Further, we define $\Homeo^G ((C,J,\rho),(C^\prime,J^\prime,\rho^\prime))$ to be the set of pairs $(\phi,\tilde{\phi})$ where $\phi\in\Homeo((C,J),(C^\prime,J^\prime))$ and $\tilde{\phi}$ is a lift of $\phi$ such that TFDC:
\begin{equation*}
\xymatrix{
F\ar[r]^{\tilde{\phi}}\ar[d]_\rho & F^\prime\ar[d]^{\rho^\prime}\\
\Homeo(S^1,C)\ar[r]_\phi & \Homeo(S^1,C^\prime)
}
\end{equation*}
\end{defn}

\begin{defn}
We define the \emph{category of structured circles}  $\mathcal{C}_G$ to be the category with objects given by marked structured circles and hom-sets given by
\begin{equation*}
\Hom_{\mathcal{C}_G}\left( (C,J,\rho),(C^\prime,J^\prime,\rho^\prime) \right)= \pi_0 \Homeo^G \left((C,J,\rho),(C^\prime,J^\prime,\rho^\prime)\right)
\end{equation*}
\end{defn}

This category admits a functor $\lambda_{\mathcal{C}_G}:\mathcal{C}_G\to\mathbf{FSet}$ given by sending $(C,J,\rho)$ to $\pi_0(J)$. On morphisms, it is given by the induced map on connected components of $\phi$. 

\begin{prop}
Let $G$ be a planar Lie group corresponding to the crossed simplicial $G$. Then there is an equivalence of categories $\pi:\mathcal{C}_G\to \mathcal{G}$ such that the functor $\lambda_{\mathcal{C}_G}$ factors as
\begin{equation*}
\xymatrix{
\mathcal{C}_G\ar[r]^{\pi}\ar[ddr]_{\lambda_{\mathcal{C}_G}} & \mathcal{G}\ar[dd]_{\lambda_{\mathcal{G}}} & \Delta\G\ar[l]^\cong \ar[ddl]_{\lambda}\\
  & & \\
  & \mathbf{FSet} & 
}
\end{equation*}
\end{prop}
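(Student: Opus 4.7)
The plan is to construct the functor $\pi$ explicitly, verify it is essentially surjective and fully faithful, and then check the factorization of $\lambda_{\mathcal{C}_G}$. First I would fix, for each $n\geq 0$, a standard $G$-structured marked circle $\mathbf{C}_n=(S^1, J_n, \rho_n)$, where $J_n$ is a disjoint union of $n+1$ closed arcs on $S^1$ arranged cyclically and labelled by $\{0,1,\ldots,n\}$, and $\rho_n$ is a canonical reduction of the structure group. The preliminary technical step is to establish that $\Aut_{\mathcal{C}_G}(\mathbf{C}_n)$ is naturally isomorphic to $\G_n$ in such a way that the induced action on $\pi_0(J_n)=\{0,\ldots,n\}$ recovers $\lambda_n:\G_n\to \Sigma_{n+1}$. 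This is precisely the content of the correspondence between planar Lie groups and crossed simplicial groups, and follows from the Fiedorowicz--Loday identification $|\G_\bullet|\cong G$ combined with the connective covering $p_{\Homeo}:\Homeo^G(S^1)\to \Homeo(S^1)$.

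Next I would define $\pi$ on objects by sending $(C,J,\rho)$ with $|\pi_0(J)|=n+1$ to the triple
\begin{equation*}
\pi(C,J,\rho)=\bigl(\operatorname{Isom}_{\mathcal{C}_G}(\mathbf{C}_n,(C,J,\rho)),\;\pi_0(J),\;\rho_\pi\bigr),
\end{equation*}
where the torsor structure comes from precomposition with $\Aut_{\mathcal{C}_G}(\mathbf{C}_n)\cong\G_n$, and $\rho_\pi$ sends a class $[\phi,\tilde\phi]$ to the induced bijection $\pi_0(\phi):\{0,\ldots,n\}\to\pi_0(J)$. Equivariance along $\lambda_n$ is immediate from the identification in the previous paragraph. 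On morphisms, a class $[\phi,\tilde\phi]:(C,J,\rho)\to(C',J',\rho')$ is sent to the collection $\{\psi_{f,f'}\}$ with $\psi_{f,f'}$ corresponding to $(f')^{-1}\circ[\phi,\tilde\phi]\circ f\in \Hom_{\mathcal{C}_G}(\mathbf{C}_n,\mathbf{C}_m)$ under the identification with $\Hom_{\Delta\G}([n],[m])$; the equivariance and composition axioms of a $\Delta\G$-structured set morphism then follow formally.

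Essential surjectivity holds because $\pi(\mathbf{C}_n)\cong\epsilon[n]$, and every $\Delta\G$-structured set is equivalent to some $\epsilon[n]$ after trivialisation of its torsor. Full faithfulness reduces to the identification
\begin{equation*}
\Hom_{\mathcal{C}_G}(\mathbf{C}_n,\mathbf{C}_m)\cong \Hom_{\Delta\G}([n],[m]),
\end{equation*}
which is the topological analogue of canonical factorization: a homotopy class of marking-preserving $G$-lifted homeomorphism is uniquely determined by its effect on $\pi_0$ (a morphism in $\Delta$) together with an element of $\G_n$ parametrising the lift. Commutativity of the final diagram is immediate, since $\lambda_\cG(\pi(C,J,\rho))=\pi_0(J)=\lambda_{\mathcal{C}_G}(C,J,\rho)$ and the equivalence $\epsilon:\Delta\G\to\cG$ intertwines $\lambda$ with $\lambda_\cG$ by construction.

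The main obstacle is the foundational identification $\Aut_{\mathcal{C}_G}(\mathbf{C}_n)\cong\G_n$ together with its compatibility with the action on $\pi_0(J_n)$; once this is in hand, the rest is a formal consequence of the structured-set formalism and of canonical factorization in $\Delta\G$.
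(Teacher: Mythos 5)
Your overall architecture (map out of standard models $\mathbf{C}_n$, torsor by precomposition, reduce everything to an identification of hom-sets, then formal verification of equivariance, essential surjectivity, and the factorization) is the natural one, and the formal parts are handled correctly. Note, however, that the paper does not prove this proposition at all: it cites Theorem 2.13 of \cite{DK}, and your outline is essentially a skeleton of what that theorem's proof must contain. The problem is that, as a self-contained argument, your proposal begs the question at its only substantive point. The identification $\Aut_{\mathcal{C}_G}(\mathbf{C}_n)\cong\G_n$ compatibly with $\lambda_n$ --- and more generally $\Hom_{\mathcal{C}_G}(\mathbf{C}_n,\mathbf{C}_m)\cong\Hom_{\Delta\G}([n],[m])$, which you invoke for full faithfulness --- does not ``follow from'' the Fiedorowicz--Loday realization $|\G_\bullet|\cong G$ together with the mere existence of the connective covering $p_{\Homeo}$. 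The realization theorem is a statement about the simplicial group $\G_\bullet$ as a topological group; to pass from it to $\pi_0$ of the space of marking-preserving structured homeomorphisms of marked circles one must restrict the covering $\Homeo^G(S^1)\to\Homeo(S^1)$ over the subspace of homeomorphisms $\phi$ with $\phi(J_n)\subset J_m$, establish the homotopy type (contractible components indexed by the appropriate monotone data) of that subspace, and then run the covering-space argument that produces the canonical factorization ``morphism of $\Delta$ plus group element.'' That analysis is precisely the content of the cited result, so asserting it as a ``preliminary technical step'' leaves the proof with a genuine gap rather than reducing it to something already available in the paper.

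Two smaller points in the same vein: your object assignment $\pi(C,J,\rho)=\operatorname{Isom}_{\mathcal{C}_G}(\mathbf{C}_n,(C,J,\rho))$ is only a $\G_n$-torsor once you also know this set is nonempty, i.e.\ that every $G$-structured marked circle with $n+1$ marked intervals is isomorphic in $\mathcal{C}_G$ to the standard model; this again requires the same lifting analysis and is not purely formal. The paper's remark after the proposition instead models the torsor concretely as $\pi_0(F_J)$, the components of the restriction of the bundle to marking-compatible homeomorphisms, which is exactly the device that makes the key identification provable; your $\operatorname{Isom}$-based model is equivalent, but it pushes all of the content into the automorphism computation you have not supplied.
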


\begin{proof}
(\cite{DK}, Theorem 2.13)
\end{proof}

\begin{rem}
Though we will not reproduce the proof here, it will be of use to briefly write down the set and torsor associated to a structured marked circle $(C,J)$, $F\to\Homeo(S^1,C)$.  The set in question will simply be
\begin{equation*}
I=\pi_0 (J)
\end{equation*}

To define the `torsor,' Let $\Homeo((S^1,[n]),(C,J))$ be the subspace of $\Homeo(S^1,C)$ which maps the standard set of $n+1$ marked points (roots of unity) bijectively to the intervals comprising  $J$. Let $F_J$ be the restriction of the bundle $F$ to $\Homeo((S^1,[n]),(C,J))$. Then $\pi_0(F_J)$ can be given a canonical $\G_n$-torsor structure such that the obvious map 
\begin{equation*}
\pi_0(F_J)\to\text{Isom}_{\mathbf{FSet}}([n],I)=\pi_0\left( \Homeo\left((S^1,[n]),(C,J)\right)\right)
\end{equation*} 
is equivariant. 
\end{rem}

\begin{lem}\label{lem:dualitylem}
There is a duality $D$ on $\mathcal{C}_G$, which sends the equivalence class of $[n]$ to the equivalence class of $[n]$. 
\end{lem}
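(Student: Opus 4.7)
The plan is to exploit the orientation-reversing involution of the standard circle to build $D$. Fix $\tau:S^1\to S^1$ to be an orientation-reversing homeomorphism, for concreteness complex conjugation, so that $\tau^2=\mathrm{id}$. Conjugation by $\tau$ is a continuous involution of the topological group $\Homeo(S^1)$ which preserves the identity component. Because $p_{\Homeo}:\Homeo^G(S^1)\to \Homeo(S^1)$ is a connective covering, this involution lifts canonically to an involution $\tilde{\tau}_{\ast}$ of $\Homeo^G(S^1)$. Note that when $G$ itself does not contain reflections (the cyclic case $G=SO(2)$ is paradigmatic), $\tau$ does not itself lift, but conjugation by $\tau$ still does; this is the subtle point.

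With this lift in hand, define $D$ on objects by $D(C,J,\rho:F\to\Homeo(S^1,C))=(C,J,\rho^D)$, where $\rho^D$ is obtained by post-composing each $\rho(f)$ with $\tau$ on the standard-circle side, while simultaneously twisting the action of $\Homeo^G(S^1)$ on $F$ through $\tilde{\tau}_{\ast}$. The compatibility between conjugation and the right torsor action ensures that $\rho^D$ remains equivariant along $p_{\Homeo}$. On morphisms, given a representative $(\phi,\tilde\phi)$ of a class in $\Hom_{\mathcal{C}_G}((C,J,\rho),(C',J',\rho'))$, the underlying $\phi$ is a homeomorphism of circles and is hence invertible; set $D(\phi,\tilde\phi):=(\phi^{-1},\tilde\phi^{-1})$ and view it as a morphism $D(C',J',\rho')\to D(C,J,\rho)$. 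The failure of $\phi^{-1}$ to send $J'$ into $J$ is precisely the ``marking reversal'' that the $\tau$-twist built into $\rho^D$ and $(\rho')^D$ absorbs.

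After checking that this construction descends to $\pi_0$ and intertwines composition in the reversed order — a routine but careful verification using the compatibility of $\tilde{\tau}_{\ast}$ with the lifts $\tilde\phi$ — two things remain. First, $D\circ D\cong\mathrm{id}$, which is immediate from $\tau^2=\mathrm{id}$ and $(\phi^{-1})^{-1}=\phi$, so $D$ is a duality. Second, to see $D([n])\sim[n]$, note that the standard object $[n]$ is $S^1$ with $n+1$ equally spaced marked intervals, and $\tau$ permutes these intervals; the reflected $G$-structure then differs from the standard one by an element of $\G_n$, so $D([n])$ and $[n]$ lie in the same isomorphism class of $\mathcal{C}_G$. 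The principal obstacle is the canonical production of $\tilde{\tau}_{\ast}$, handled via the model of connective coverings in terms of $\Homeo^G(S^1)$ established just before the lemma, combined with the uniqueness of lifts of component-preserving automorphisms of $\Homeo(S^1)$ through a connective cover.
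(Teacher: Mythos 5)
There is a genuine gap at the level of morphisms. Your $D$ leaves the marked circle $(C,J)$ untouched and only twists the reduction $\rho$ by $\tau$; but a morphism in $\mathcal{C}_G$ from $D(C',J',\rho')$ to $D(C,J,\rho)$ must in particular be a morphism of marked circles, i.e.\ its underlying homeomorphism must carry $J'$ into $J$. For a general $(\phi,\tilde\phi)$ with $\phi(J)\subset J'$ one does \emph{not} have $\phi^{-1}(J')\subset J$: for instance, if $\phi$ maps the two marked intervals $A_0\sqcup A_1$ of $(C,J)$ into the single marked interval $A'$ of $(C',J')$, then $\phi^{-1}(A')$ is a closed interval containing $A_0$, $A_1$ and one of the arcs between them, so it is not contained in $J$. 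No twist of $\rho$ can repair this: the containment condition involves only $\phi$ and the subsets $J,J'$, which your construction does not change, so the asserted ``absorption'' of the marking reversal by the $\tau$-twist does not occur, and $(\phi^{-1},\tilde\phi^{-1})$ is simply not a morphism between your dual objects.

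The missing idea --- and what the paper actually does --- is to change the marking rather than the structure: define $D(C,J,\rho)=(C,C\setminus J,\rho)$, the same structured circle with the complementary (interstitial) marking, and $D(\phi,\tilde\phi)=(\phi^{-1},\tilde\phi^{-1})$. Then $\phi(J)\subset J'$ immediately gives $\phi^{-1}(C'\setminus J')\subset C\setminus J$, so inverses are honest morphisms of the dual marked circles; contravariant functoriality and $D^2=\mathrm{id}$ are clear, and $D$ preserves the class of $[n]$ because $C\setminus J$ has the same number of components as $J$. In particular no lift of conjugation by $\tau$ to $\Homeo^G(S^1)$ is needed at all. As a side remark, your closing claim that the reflected structure ``differs from the standard one by an element of $\G_n$'' is also problematic in the cyclic case, where no element of $\G_n\cong\Z/(n+1)\Z$ realizes a reflection; the object-level statement would instead follow from the fact that isomorphism classes in $\mathcal{C}_G$ are determined by the number of marked intervals.
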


\begin{proof}
We can write down this duality explicitly. On objects, we define 
\begin{equation*}
(C,J,\rho)\mapsto (C,C\setminus J,\rho)
\end{equation*}
And on morphisms, 
\begin{equation*}
(\phi,\tilde{\phi})\mapsto (\phi^{-1},\tilde{\phi}^{-1})
\end{equation*}
One can quickly verify that this is functorial, and it is obvious that $D^2=Id$. Notably, the set $C\setminus J$ is comprised of the same number of intervals as the set $J$ itself. 
\end{proof}

The final useful connection realized by this topological framework is that of graphs. Given a surface with a specific sort of structure, we can obtain a $\Delta\G$-structured graph. 

\begin{defn}
Let $p:G\to O(2)$ be a planar Lie group corresponding to a connective covering $\mathbf{p}:\mathbb{G}\to GL(\R,2)$.
A \emph{$G$-structured surface} is a surface $S$ equipped with a reduction of the structure group
\begin{equation*}
 \rho:F\to \text{Fr}_S
\end{equation*}
  along the map $\mathbf{p}:\mathbb{G}\to GL(\R,2)$, where $\text{Fr}_S$ denotes the frame bundle of the tangent bundle of $S$. 

  A structured diffeomorphism of structured surfaces is simply a diffeomorphism together with a lift to the principal
  $\mathbb{G}$-bundles. 
  \end{defn}
  
 We then have the following result (proposition 4.8, \cite{DK}):
 
 \begin{prop}\label{prop:structgraphprop}
  Let $S$ be a $G$-structured surface and $\Gamma$ be a graph embedded in $S$. Then $\Gamma$ is endowed with a canonical 
  $\Delta\G$-structure by the embedding.
 \end{prop}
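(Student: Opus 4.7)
The approach I would take is to reduce the problem to the topological description of $\Delta\G$-structured sets as $G$-structured marked circles, using the equivalence $\pi:\mathcal{C}_G\to\mathcal{G}$ from the preceding proposition. Since $\lambda_{\mathcal{C}_G}=\lambda_\mathcal{G}\circ\pi$, it is enough to construct a functor $\tilde{I}_\Gamma:I(\Gamma)\to\mathcal{C}_G$ whose composition with $\lambda_{\mathcal{C}_G}$ recovers $I_\Gamma$; composing with $\pi$ afterwards yields the desired lift to $\cG$. Throughout the argument I would fix a tubular neighborhood $N(\Gamma)\subset S$ of the embedded graph.

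On objects, for each vertex $v$ I would pick a small closed disk $D_v\subset S$ centered at $v$ so that $D_v\cap\Gamma$ consists only of the initial arcs of the half-edges in $H(v)$. Setting $C_v=\partial D_v$ and $J_v=C_v\cap N(\Gamma)$, the latter is a disjoint union of closed arcs in $C_v$ canonically indexed by $H(v)$. The reduction $\rho:F\to\text{Fr}_S$ of the structure group of $S$ along $\mathbb{G}\to GL(2,\R)$, restricted over the loop $C_v$, transports via the homotopy equivalence $GL(2,\R)\simeq O(2)\simeq\Homeo(S^1)$ and the equivalence of categories of connective coverings discussed earlier into a reduction $\rho_v:F_v\to\Homeo(S^1,C_v)$ along $p_{\Homeo}$. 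This gives an object $(C_v,J_v,\rho_v)\in\mathcal{C}_G$ with $\lambda_{\mathcal{C}_G}(C_v,J_v,\rho_v)=\pi_0(J_v)\cong H(v)$. For an edge $e=\{h,\mathfrak{y}(h)\}$, the analogous construction applied to a small disk $D_e$ transverse to $e$ at its midpoint yields $(C_e,J_e,\rho_e)$ with $\pi_0(J_e)=\{h,\mathfrak{y}(h)\}$. On morphisms, for an incidence $v\to e$ I would choose an isotopy of embedded disks $\{D_t\}_{t\in[0,1]}$ in $S$ starting at $D_v$, with the center sliding along the half-edge $h$, and ending at $D_e$. The resulting homeomorphism $\phi:C_v\to C_e$, after a small modification within its isotopy class, sends $J_v$ into $J_e$, mapping the $h$-arc of $J_v$ into the $h$-arc of $J_e$ and collapsing all other arcs of $J_v$ into the $\mathfrak{y}(h)$-arc of $J_e$, so that the induced map on $\pi_0$ is exactly $I_\Gamma(v\to e)$. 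The $\mathbb{G}$-structure of $S$ extends along the isotopy to give a lift $\tilde\phi$, yielding the required morphism $[(\phi,\tilde\phi)]$ in $\mathcal{C}_G$.

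Functoriality of $\tilde I_\Gamma$ is then automatic: there is no composable pair of non-identity morphisms in $I(\Gamma)$, since the source of an incidence is a vertex while the target is an edge. The main obstacle I anticipate is showing that the objects and morphisms constructed above are well-defined independently of the auxiliary choices of tubular neighborhood, disks, and interpolating isotopy, up to the homotopy relation defining morphisms in $\mathcal{C}_G$; this should reduce to the contractibility of the space of small disks about a fixed point in $S$ together with the functoriality of the equivalence of categories of connective coverings, but keeping track of the compatibility of the $\mathbb{G}$-structure with all the choices requires some care.
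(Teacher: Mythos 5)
Your construction is essentially the proof the paper relies on: the paper gives no argument of its own here (it simply cites \cite{DK}, Proposition 4.8), and that proof is exactly your scheme of attaching to each vertex and edge a $G$-structured marked circle, transporting it through the equivalence $\mathcal{C}_G\simeq\mathcal{G}$, obtaining the incidence morphisms by sliding along half-edges, and noting functoriality is automatic since $I(\Gamma)$ has no composable non-identity morphisms. The one imprecision is the phrase ``restricted over the loop $C_v$'': the $\Homeo^G(S^1)$-reduction of $\Homeo(S^1,C_v)$ comes from the associated bundle $F_\mathbb{G}\times_\mathbb{G}\Homeo^G(S^1)$ over the contractible disk $D_v$ (equivalently at the center point, identifying $C_v$ radially with the circle of directions $C(T_vS)$, just as the paper later uses $C(T_xM)$), not from restricting $F$ over the curve $C_v$ itself, but this is a routine repair rather than a gap in the approach.
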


\subsection{Balanced Crossed Simplicial Groups}

Several properties displayed by planar crossed simplicial groups have a fundamental bearing on the construction of topological field theories. So much so, in fact, that these properties on their own suffice to allow to construct a combinatorial version of a topological field theory (a `Crossed Simplicial Field Theory,' as defined in section 4).

For any crossed simplicial group $\Delta\G$, the morphisms in $\Delta$ 
\begin{equation*}
i_n: [0]\to[n]
\end{equation*}
given by sending $0$ to $i$ define pullback maps:
\begin{equation*}
i_n^\ast:\G_n\to \G_0
\end{equation*}
under canonical factorization. In particular, if we restrict the source to $\Stab(i)\subset\G_n$, we can immediately verify that these maps become group homomorphisms. 

\begin{defn}
A crossed simplicial group $\Delta\G$ is called \emph{balanced} if
\begin{itemize}
\item $\Delta\G$ admits a duality 
\begin{equation*}
D_\G: \Delta\G\overset{\cong}{\longrightarrow} \Delta\G^{\text{op}}
\end{equation*}
such that, denoting by $\{i,j\}$ the map in $\Hom_\Delta([1],[n])$ sending $0$ to $i$ and $1$ to $j$, we have 
\begin{equation*}
D_\G(\{i-1,i\})=\psi_i \;\;\;\; 0<i\leq n
\end{equation*}
where $\psi_i^{-1}(i)=\{i\}$ and 
\begin{equation*}
D_\G(\{0,n\})=\phi
\end{equation*}
where $\phi^{-1}(0)=\{0\}$.
\item The pullback maps 
\begin{equation*}
i_n^\ast:\Stab(i)\to \G_0
\end{equation*}
are all isomorphisms. 
\item $1_1^\ast=0_1^\ast$ on $\Stab(1)=\Stab(0)$.
\end{itemize}
\end{defn}

\begin{lem}
Any planar crossed simplicial group $\Delta\G$ is balanced.
\end{lem}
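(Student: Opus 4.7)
The plan is to exploit the topological model $\mathcal{C}_G \simeq \cG \simeq \Delta\G$ provided in the preceding section, with the duality of Lemma~\ref{lem:dualitylem} supplying the hardest piece of data.

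First, I would transport the duality $D$ of Lemma~\ref{lem:dualitylem} along the equivalence $\pi:\mathcal{C}_G \to \cG$ to obtain a duality $D_\G$ on $\Delta\G$. Since $D$ sends $(C,J,\rho)$ to $(C,C\setminus J,\rho)$, and $\pi_0(C\setminus J)$ is in canonical bijection with $\pi_0(J)$ for a marked circle, this duality fixes isomorphism classes of objects. To verify the required action on morphisms, I would describe $\{i-1,i\}:[1]\to[n]$ topologically as a structured morphism of marked circles which identifies the two arcs of $[1]$ with the $(i-1)$-th and $i$-th arcs of $[n]$. Applying $D$ sends this to the morphism identifying the two complementary gaps of $[1]$ with the gap in $[n]$ lying between the $(i-1)$-th and $i$-th arcs, and with the union of all other gaps respectively; after identifying gaps of $[n]$ with its arcs via the standard labelling, this is precisely $\psi_i$. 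The boundary case $\{0,n\}\mapsto\phi$ is handled identically.

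For the pullback maps $i_n^\ast:\Stab(i)\to \G_0$, I would use the explicit torsor description $\G_n = \pi_0(F_J)$ from the remark following the equivalence $\mathcal{C}_G \simeq \cG$. An element of $\Stab(i)\subset\G_n$ is represented by a structured homeomorphism fixing the $i$-th marked arc setwise; restricting its $\mathbb{G}$-lift to the fiber over a chosen interior point of that arc yields an element of $\G_0$, and this restriction is $i_n^\ast$. Injectivity follows because a structured homeomorphism with trivial restriction may be isotoped through such homeomorphisms to the identity, using the connectedness of the space of cyclic configurations of the remaining $n$ arcs. Surjectivity follows because any element of $\mathbb{G}$ extends to a structured self-homeomorphism supported in a small neighborhood of the $i$-th arc. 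The final condition $1_1^\ast = 0_1^\ast$ on $\G_1^0$ then reduces to the observation that any element of $\G_1^0$ is represented by a structured self-homeomorphism acting as the identity on the underlying circle, hence corresponds to a global $\mathbb{G}$-section over the identity, whose class at one fiber agrees with its class at any other.

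The main obstacle I expect is the explicit combinatorial identification of $D_\G(\{i-1,i\})$ with $\psi_i$: one has to track carefully how the labelling inherited from $\mathcal{C}_G \simeq \cG$ offsets the components of $C\setminus J$ relative to those of $J$, so that geometric complementation really yields $\psi_i$ and not, for example, $\psi_{i-1}$ or $\psi_i$ twisted by a nontrivial element of $\G_0$. Once the labelling conventions are pinned down, the remaining two conditions reduce to routine fibration arguments over $\mathbb{G}\to GL(2,\R)$.
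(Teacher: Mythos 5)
Your strategy coincides with the paper's: work in the structured circle model, transport the duality of Lemma \ref{lem:dualitylem} across $\mathcal{C}_G\simeq\cG\simeq\Delta\G$, and use the torsor description $\pi_0(F_J)$ for the pullback maps. However, the step you defer as ``the main obstacle'' is in fact the substantive content of the first balancedness axiom, so the proposal as written leaves it unproven. The naive complementation $(C,J,\rho)\mapsto(C,C\setminus J,\rho)$ only determines the dual structured set up to the action of $\G_0$: to get $D_\G(\{i-1,i\})=\psi_i$ on the nose, rather than $\psi_i$ twisted by a nontrivial element of $\G_0$ or shifted in index, one must specify a trivialization of the dual structured set compatibly with a chosen frame of the original one. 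The paper does exactly this: given a frame $x\in\pi_0(F_I)$ lying over an interval $A\subset I$, it takes the interstice $B\subset I^\vee$ directly anti-clockwise of $A$, a point $z\in B$, an interval $J$ containing $A$ and $z$, and lets $x$ induce a frame of $\pi_0(F_J)$ and hence of $\pi_0\left(F(I^\vee)\right)$; with this convention the stated identities hold. Asserting that ``geometric complementation really yields $\psi_i$'' once conventions are pinned down is precisely what needs to be, and in the paper is, constructed.

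The second gap is in your mechanism for $i_n^\ast$. You propose to read off an element of $\G_0$ from the value of the $\mathbb{G}$-lift on a single fiber over an interior point of the $i$-th arc, and to prove surjectivity by extending ``any element of $\mathbb{G}$'' to a structured self-homeomorphism supported near that arc; your argument for $1_1^\ast=0_1^\ast$ (``the class at one fiber agrees with its class at any other'') relies on the same picture. But $\G_0$ is not $\pi_0(\mathbb{G})$, nor is it visible in a single fiber: for the paracyclic case $\G_0\cong\Z$ while $\mathbb{G}=\widetilde{GL}^+(2,\R)$ is connected, and similarly for the $N$-cyclic and quaternionic cases; the class of an automorphism in $\G_0$ is a winding-type invariant of the entire lift. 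The correct home for this invariant is $\pi_0(F_A)$, the set of components of the restriction of the bundle $F\to\Homeo(S^1,C)$ to homeomorphisms carrying the base marked point into the arc $A$. The paper's argument is that an automorphism of the $n$-marked structured circle fixing $A$ is uniquely determined by the component of $F_A$ to which its lift sends a chosen $y\in\pi_0(F_A)$, that the same holds for the one-marked circle with $x\in\pi_0(F_I)$, and that the bundle map of the chosen morphism induces a bijection $\pi_0(F_I)\to\pi_0(F_A)$, which is precisely $i_n^\ast$; injectivity and surjectivity come for free from this bijection rather than from isotopy and extension arguments. (For the last axiom the paper simply verifies $1_1^\ast=0_1^\ast$ case by case; your uniform argument would need to be re-founded on $\pi_0(F_A)$ rather than on fiber values to have a chance of working.)
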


\begin{proof}
The first property is immediate from lemma \ref{lem:dualitylem}, with one slight caveat: We need to choose a specific trivialization of the structured set extracted from the dual circle, such that the duality has the desired property on the morphisms mentioned in the definition. 

Suppose we have a structured circle $(C,I,\rho)$, with a trivialization $x\in\pi_0\left(F_I\right)$ with 
\begin{equation*}
\rho_{\pi_0(I)}(x)=A
\end{equation*}
For an interval $A\subset I$. Then, letting $I^\vee$ be the interstices of the marked circle, consider a point $z$ in the interval $B\subset I^\vee$ directly anti-clockwise from $A$. Let $J$ be an interval containing $A$ and $z$. Then $x$ defines a trivialization of $\pi_0(F_J)$, which in turn defines a trivialization of $\pi_0\left(F(I^\vee)\right)$. It is easy to verify that this procedure yields the desired properties.

To verify the second property, we again resort to the structured circle model. Let $(C,I,\rho)$ be a structured circle with one marked interval, and let $(\tilde{C},J,\tilde{\rho})$ be a structured circle with $n$ marked intervals. Choose a morphism of structured circles
\begin{equation*}
\Phi=(\phi,\tilde{\phi}):(C,I,\rho)\to(\tilde{C},J,\tilde{\rho})
\end{equation*}
which sends $I$ into an interval $A\subset J$. We can identify this morphism with $i_n$ in $\Delta$ by choosing connected components $x\in \pi_0\left( F_I\right)$ and $y\in \pi_0\left(F_A \right)$.

An automorphism $(\gamma,\tilde{\gamma})$ of $(\tilde{C},J,\tilde{\rho})$ which fixes $A\subset J$ can be uniquely specified by the connected component of $F_A$ to which $\tilde{\gamma}$ sends $y$. Similarly, we can specify an automorphism $(\delta,\tilde{\delta})$ of $(C,I,\rho)$ by specifying the connected component of $F_I)$ to which $\tilde{\delta}$ sends $x$. Since $\tilde{\phi}$ is a bundle map, it induces an bijection 
\begin{equation*}
\pi_0\left( F_I\right)\to \pi_0\left(F_A \right)
\end{equation*}
This bijection is precisely the map $i_n^\ast$, proving the second part of the lemma. 

The equality $1_1^\ast=0_1^\ast$ can be checked case-by-case. 
\end{proof}

We call the special case $\Stab(1)=\Stab(0)\subset \G_1$, $\G_1^0$. For a balanced crossed simplicial group, we have a canonical identification
\begin{equation*}
\G_0^1\cong\G_0
\end{equation*}
As a result, we have the following:

\begin{lem}
For a balanced crossed simplicial group, there is a canonical identification $P_{\Delta\G}(1)\cong\G_0$
\end{lem}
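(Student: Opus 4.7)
The plan is to simply chain together two results that are already in hand. By Proposition \ref{prop:monoid}, we have a canonical identification $P_{\Delta\G}(1)\cong \G_1^0$, where $\G_1^0$ denotes the kernel of $\lambda_1:\G_1\to \Sigma_2$. The assumption that $\Delta\G$ is balanced then gives us exactly the machinery needed to identify $\G_1^0$ with $\G_0$.

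First I would unpack the notation $\G_1^0$. An element $g\in \G_1$ lies in $\ker(\lambda_1)$ precisely when the induced set map on $\{0,1\}$ is trivial, i.e.\ when $g$ fixes both $0$ and $1$. In other words,
\begin{equation*}
\G_1^0 = \Stab(0)\cap \Stab(1)\subset \G_1.
\end{equation*}
In the balanced setting, the paper has just observed that $\Stab(0)=\Stab(1)$ as subgroups of $\G_1$, and has labelled this common subgroup $\G_1^0$, so the two pieces of notation are consistent.

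Next I would invoke the second condition in the definition of a balanced crossed simplicial group: the pullback maps $i_n^\ast:\Stab(i)\to \G_0$ are all group isomorphisms. Taking $n=1$ and $i=0$ (equivalently $i=1$, by the third condition $1_1^\ast=0_1^\ast$ on $\G_1^0$), this gives a distinguished isomorphism
\begin{equation*}
0_1^\ast:\G_1^0 = \Stab(0) \xrightarrow{\;\cong\;} \G_0.
\end{equation*}
Composing with the identification from Proposition \ref{prop:monoid} yields the desired canonical identification $P_{\Delta\G}(1)\cong \G_0$.

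There is essentially no obstacle here; the only thing one might want to double-check is that $0_1^\ast$ really is a group homomorphism on $\Stab(0)$ (and not merely a map of sets, as the pullback maps $\phi^\ast$ are in general). This is exactly the point remarked on just before the definition of \emph{balanced}: restricting $i_n^\ast$ to $\Stab(i)$ promotes it from a set map to a group homomorphism, because the second factor of the canonical factorization of $i_n\circ g$ is $i_n$ itself for $g\in\Stab(i)$, which makes the interchange law collapse to ordinary composition in $\G_0$. Once this is noted, the statement is immediate.
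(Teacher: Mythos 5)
Your argument is correct and is exactly the route the paper takes: the paper leaves the lemma as an immediate consequence of Proposition \ref{prop:monoid} ($P_{\Delta\G}(1)\cong\G_1^0$) together with the balancedness condition that $0_1^\ast:\Stab(0)\to\G_0$ is an isomorphism. Your extra remarks (that $\G_1^0=\Stab(0)=\Stab(1)$ and that the restricted pullback is a genuine group homomorphism) correctly fill in the details the paper only gestures at.
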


We also immediately obtain another proposition relevant to structured graphs:

\begin{lem}\label{lem:augpullback}
For a balanced crossed simplicial group $\Delta\G$, an augmentation map 
\begin{equation*}
\phi: [n]\to [1]
\end{equation*}
gives an isomorphism 
\begin{equation*}
\psi^\ast:\G_1^0\overset{\cong}{\to}\Stab(i)\subset \G_n
\end{equation*}
where $i$ is the element of $[n]$ not collapsed by $\phi$.
\end{lem}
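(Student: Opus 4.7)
The plan is to decompose $\phi^*|_{\G_1^0}: \G_1^0 \to \Stab(i)$ as a composition of isomorphisms supplied by the three balanced axioms, with $i_n^*: \Stab(i) \to \G_0$ serving as the anchor. I would first reduce to the case $\phi \in \Delta$ (so $i \in \{0, n\}$): writing $\phi = \phi_\Delta \circ g_\phi$ via canonical factorization, a short computation shows $\phi^*(g) = g_\phi^{-1} \cdot \phi_\Delta^*(g) \cdot g_\phi$, and conjugation by $g_\phi$ identifies $\Stab(i)$ with $\Stab(i')$ for $i' = \lambda_n(g_\phi)(i)$, so it suffices to treat the $\Delta$-case.

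Step 1 establishes $\phi^*(\G_1^0) \subseteq \Stab(i)$. For $g \in \G_1^0$ we have $\lambda_1(g) = \mathrm{id}$, so passing the canonical factorization square $\phi \circ \phi^*g = g \circ g^*\phi$ to underlying sets gives $g^*\phi = \phi \circ \lambda_n(\phi^*g)$. Monotonicity of $g^*\phi \in \Hom_\Delta([n],[1])$ combined with the sparse shape of $\phi$ (every element except $i$ is sent to the same value) forces the permutation $\lambda_n(\phi^*g)$ to fix $i$; as a byproduct one also reads off $g^*\phi = \phi$.

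Step 2 verifies that $\phi^*|_{\G_1^0}$ is a group homomorphism. Using $g^*\phi = \phi$ for $g \in \G_1^0$, I expand
\begin{equation*}
\phi \circ \phi^*(g_1 g_2) = (g_1 g_2) \circ \phi = g_1 \circ \phi \circ \phi^*(g_2) = \phi \circ \phi^*(g_1)\, \phi^*(g_2),
\end{equation*}
and uniqueness of canonical factorization yields $\phi^*(g_1 g_2) = \phi^*(g_1)\,\phi^*(g_2)$.

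Step 3 identifies $\phi^*|_{\G_1^0}$ as an isomorphism by pre-composing the square with $i_n: [0] \to [n]$. Because $\phi^*g \in \Stab(i)$, the left edge factors as $i_n \circ i_n^*(\phi^*g)$; because $\phi \circ i_n = \epsilon_1$ with $\epsilon = \phi(i)$, the right edge factors as $\epsilon_1 \circ \epsilon_1^*(g)$. Equating yields $i_n^* \circ \phi^*|_{\G_1^0} = \epsilon_1^*|_{\G_1^0}$. By the third balanced axiom the right-hand side is the canonical isomorphism $\G_1^0 \cong \G_0$, and by the second balanced axiom $i_n^*: \Stab(i) \to \G_0$ is an isomorphism; these together force $\phi^*: \G_1^0 \to \Stab(i)$ to be an isomorphism. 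The main obstacle I anticipate is Step 1, where both conclusions $g^*\phi = \phi$ and $\lambda_n(\phi^*g)(i) = i$ have to be extracted cleanly from a single monotonicity argument without circular use of the pullback; once that is in place, the remainder is essentially bookkeeping against the balanced axioms.
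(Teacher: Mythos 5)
Your proposal is correct and follows essentially the same route as the paper's proof: reduce to $\phi\in\Delta$ by conjugation, observe that the image lies in $\Stab(i)$, and deduce the identity $i_n^\ast\circ\phi^\ast=j_1^\ast$ (with $j=\lambda(\phi)(i)$) so that the balanced axioms force $\phi^\ast$ to be an isomorphism by a two-out-of-three argument; you merely make explicit some steps the paper leaves implicit (the monotonicity argument for $\phi^\ast(\G_1^0)\subseteq\Stab(i)$ and the homomorphism check). One small attribution slip: that $\epsilon_1^\ast:\G_1^0\to\G_0$ is an isomorphism is the second balanced axiom applied with $n=1$; the third axiom only ensures the identification is independent of $\epsilon$, which is not actually needed here.
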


\begin{proof}
It is clear that $\psi^\ast(\G_1^0)\subset\Stab(n)$. To see that it is an isomorphism, it suffices to prove it is an isomorphism for $\phi\in\Delta$, since $\Stab(i)$ is related to $\Stab(0)$ and $\Stab(n)$ by conjugation by an element in $\G_n$ for any $i$. 

In this case, we notice that TFDC:
\begin{equation*}
\xymatrix{
 & [n]\ar[dr]^\phi & \\
 [0]\ar[ur]^{i_n}\ar[rr]_{j_1} & & [1]
}
\end{equation*} 
where $j=\lambda(\phi)(i)$. This means that we have:
\begin{equation*}
i_n^\ast \circ \phi^\ast=j_1^\ast
\end{equation*}
And, since two of these maps are isomorphisms, so is the third. 
\end{proof}

\begin{exmp}[Non-examples]
The trivial crossed simplicial group $\Delta$ is not balanced, since 
\begin{equation*}
\Hom_\Delta([0],[n])\not\cong\Hom_\Delta([n],[0])
\end{equation*}
for $n\geq 1$.

Similarly, the braid crossed simplicial group $\Delta B$ is not balanced. To see this, we notice that, if it were balanced, we would have that
\begin{eqnarray*}
B_{n+1} & \cong &\Hom_{\Delta B}([0],[n])\cong\Hom_{\Delta} ([0],[n]) 
\end{eqnarray*}
which contradicts the infinitetude of $B_{n+1}$ for $n>0$. 
\end{exmp}

\begin{exmp}\label{ex:BH}
We already have shown that planar crossed simplicial groups provide examples of balanced crossed simplicial groups. However, we can also construct new examples of balanced crossed simplicial groups from old ones. 

Let $H$ be a group, and $BH$ be the associated groupoid, and let $\Delta\G$ be a balanced crossed simplicial group. We can form the product
\begin{equation*}
\Delta\G\times BH
\end{equation*}
to get a new crossed simplicial group (Canonical factorization is immediate from definitions) $\Delta\mathfrak{GH}$. We have that
\begin{equation*}
\mathfrak{GH}_n=\G_n\times H
\end{equation*}
Since both $BH$ and $\Delta\G$ are self-dual, we can construct a duality on $\Delta\mathfrak{GH}$. And, since a pullback morphisms are simply the identity on $H$, we see that the remaining properties are satisfied. 
\end{exmp}

We can make some more sense of the condition that a crossed simplicial group be balanced by relating out definition to the elegant classification given in \cite{FL}.

\begin{defn}
The \emph{Weyl Crossed Simplicial Group $\Delta\W$} is a crossed simplicial group with automorphism groups 
\begin{equation*}
\W_n=\Z/2\Z \wr S_{n+1}
\end{equation*}
More explicitly, it is the category whose objects are the sets $\{0,1\ldots n\}$, and whose morphisms are maps of sets together with signed linear orders on fibers. (See \cite{DK} section 1.2 for more details).
\end{defn}

\begin{prop}
For $\Delta\G$ a crossed simplicial group
\begin{enumerate}
\item There is a canonical functor $\pi:\Delta\G\to\Delta\W$ preserving $\Delta$.
\item There is a sequence of functors, unique up to isomorphism of crossed simplicial groups
\begin{equation*}
\Delta\G^\prime \to \Delta\G \overset{\pi}{\to} \Delta\G^{\prime\prime}
\end{equation*}
Such that the induced sequences of automorphism groups are all short exact. $\Delta\G^{\prime\prime}$ is the image of $\pi$ (a crossed simplicial subgroup of $\Delta\W$), and $\Delta\G^\prime$ is a simplicial group.  
\end{enumerate}
\end{prop}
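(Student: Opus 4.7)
The strategy is to build the functor $\pi$ in two stages: first on automorphism groups $\G_n$ using the canonical parity, then extend to all of $\Delta\G$ via canonical factorization. Parts (1) and (2) then follow by setting $\G_n' := \ker(\pi_n)$ and $\G_n'' := \operatorname{im}(\pi_n)$.

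For each $n$, define $\pi_n : \G_n \to \W_n = (\Z/2\Z)^{n+1} \rtimes \Sigma_{n+1}$ by taking the $\Sigma_{n+1}$-component to be $\lambda_n$, and for each index $i$ the sign component to be $\varepsilon(CF_\G(g \circ i_n)) \in \Z/2\Z$, where $i_n \in \Hom_\Delta([0],[n])$ represents the inclusion of the index $i$ and $\varepsilon : \G_0 \to \Z/2\Z$ is the canonical parity from Section 1.3. That this is a group homomorphism is a direct computation: applying canonical factorization twice to $gh \circ i_n = g \circ (h \circ i_n)$ yields
\begin{equation*}
CF_\G(gh \circ i_n) = CF_\G\bigl(g \circ (\lambda_n(h)(i))_n\bigr) \cdot CF_\G(h \circ i_n),
\end{equation*}
and after applying $\varepsilon$ (which lands in the abelian group $\Z/2\Z$) this matches the wreath-product multiplication law. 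I then extend $\pi$ over all of $\Delta\G$ by letting it act as the canonical inclusion $\Delta \hookrightarrow \Delta\W$ on morphisms of $\Delta$, and setting $\pi(\phi \circ g') := \pi(\phi) \circ \pi(g')$ for morphisms in canonical form. Functoriality reduces to verifying that the canonical-factorization identity $g \circ (g^\ast \phi) = \phi \circ (\phi^\ast g)$ is preserved by $\pi$, which is one more round of bookkeeping with $\lambda$ and $\varepsilon$.

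For part (2), define $\Delta\G''$ to be the crossed simplicial subgroup of $\Delta\W$ whose $n$th automorphism group is $\operatorname{im}(\pi_n)$, and $\Delta\G'$ to be the preimage of $\Delta \hookrightarrow \Delta\W$ under $\pi$; the latter has $n$th automorphism group $\ker(\pi_n)$. Both inherit crossed simplicial group structure from $\Delta\G$ because $\pi$ acts as the identity on $\Delta$, and the short exact sequences $1 \to \G_n' \to \G_n \to \G_n'' \to 1$ are tautological. The only substantive remaining point is that $\Delta\G'$ is a simplicial group, i.e.\ that $\phi^\ast : \G'_m \to \G'_n$ is a group homomorphism for $\phi \in \Hom_\Delta([n],[m])$. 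The key observation is that if $g \in \ker(\pi_m)$ then $g^\ast \phi = \phi$: applying the functor $\pi$ to the canonical-factorization identity gives $\pi(g^\ast \phi) = \pi(g)^\ast \pi(\phi) = \operatorname{id}^\ast \phi = \phi$ in $\Delta\W$, and since $g^\ast \phi$ already lies in $\Delta$ and $\pi$ restricts to the identity there, this forces $g^\ast \phi = \phi$ in $\Delta\G$. Consequently $g \circ \phi = \phi \circ \phi^\ast(g)$ for $g \in \G'_m$, and a one-line computation then yields $\phi^\ast(gh) = \phi^\ast(g) \cdot \phi^\ast(h)$ for $g, h \in \G'_m$. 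Uniqueness of the sequence up to isomorphism is immediate from the kernel/image characterization.

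The main obstacle is the bookkeeping required to verify that $\pi_n$ is a homomorphism and that $\pi$ is functorial on all of $\Delta\G$, both of which rest on careful tracking of canonical factorizations combined with the canonical parity. Once $\pi$ is in hand, assembling $\G_n' = \ker(\pi_n)$ and $\G_n'' = \operatorname{im}(\pi_n)$ into crossed simplicial groups and extracting the simplicial-group property of $\Delta\G'$ is essentially formal.
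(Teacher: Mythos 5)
For the record, the paper does not prove this proposition at all: its ``proof'' is a citation of \cite{FL} (Prop.~3.16) and \cite{DK} (Thm.~1.7). Your proposal is therefore a reconstruction of the cited argument, and it follows the standard route: permutation component $\lambda_n$, sign components given by the canonical parity $\varepsilon$ of the vertex pullbacks $i_n^\ast g\in\G_0$, and then $\G_n'=\ker(\pi_n)$, $\G_n''=\operatorname{im}(\pi_n)$. The pieces you spell out are correct: the displayed cocycle identity does make $\pi_n$ a homomorphism (since $\varepsilon$ is a homomorphism to an abelian group), the sign part of pullback compatibility follows from $(\phi\circ i_n)^\ast g=i_n^\ast(\phi^\ast g)$, and your argument that the kernel acts trivially on $\Delta$ and that $\phi^\ast$ is a homomorphism on it, so that $\Delta\G'$ is a simplicial group, is exactly right. (The uniqueness claim is treated as lightly as in the paper's own statement, which is acceptable here.)

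The one point where ``one more round of bookkeeping with $\lambda$ and $\varepsilon$'' undersells the work is the verification that $\pi$ respects the crossed structure, i.e.\ that $\pi_n(\phi^\ast g)=\phi^\ast\pi_m(g)$ in $\W_n$. The $\Delta$-part causes no trouble ($g^\ast\phi$ is determined by $\phi$ and $\lambda_m(g)$ by counting fiber sizes), and the signs match formally, but the permutation component of $\phi^\ast g$ is not determined by $\lambda_m(g)$ alone: you must show that $\lambda_n(\phi^\ast g)$ carries each fiber $\phi^{-1}(j)$ onto the corresponding fiber of $g^\ast\phi$ either order-preservingly or order-reversingly, with the alternative governed precisely by $\varepsilon(j^\ast g)$. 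This is the actual content of the Fiedorowicz--Loday structure theorem --- it is what forces the image to lie in $\Delta\W$ --- and it is not a formal consequence of the identities you list. It does hold, and can be closed along your lines: if $\delta\colon[p]\to[n]$ is the monotone inclusion of the fiber $\phi^{-1}(j)$, then $\phi\circ\delta$ factors through $[0]$, so functoriality of pullback gives $\delta^\ast(\phi^\ast g)=\omega_p^\ast\bigl(j^\ast g\bigr)$; and the semidirect product description $\G_0\ltimes\Delta$ of the semi-constant subgroup $\{\omega_n^\ast\G_0\}$ shows that $\lambda_p(\omega_p^\ast h)$ is the identity when $h\in\G_0$ is even and the full reversal of $[p]$ when $h$ is odd. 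Insert that lemma where you currently wave at the bookkeeping and your proof is complete and agrees with the cited one.
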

\begin{proof}
\cite{FL} proposition 3.16 or \cite{DK} theorem 1.7
\end{proof}

As a result of the proposition, we see that there is a classification of crossed simplicial groups: they are all extensions of crossed simplicial subgroups of the Weyl group by a simplicial group. We can list these subgroups, as in figure \ref{tab:csgclass}.

\begin{figure}[h]
\begin{tabular}{l | c }
\hline 
Name & Subgroup of $\Delta\W$\\
\hline 
trivial & $\{1\}$ \\
reflexive & $\{\Z/2\Z\}$ \\
cyclic & $\Lambda$ \\
dihedral & $\Xi$\\
Symmetric & $\{S_{n+1}\}$\\
Reflexosymmetric & $\{\Z/2\Z \ltimes S_{n+1}\}$\\
Weyl & $\{ \Z/2\Z \wr S_{n+1}\}$ \\
\end{tabular}
\caption{Crossed Simplicial subgroups of $\Delta\W$.}
\label{tab:csgclass}
\end{figure}

\begin{rem}
Note that the canonical functor $\pi$, in the case of all planar crossed simplicial groups, has image $\Lambda$ or $\Xi$, and kernel a constant simplicial group. See, for instance, the example of the quaternionic crossed simplicial group as worked out in \cite{FL}. This holds more generally, as we can see in the next lemma.
\end{rem}

\begin{lem}\label{lem:ballem}
Every balanced crossed simplicial group is an extension of $\Lambda$ or $\Xi$ by a constant simplicial group. 
\end{lem}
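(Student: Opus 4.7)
The plan is to apply the preceding proposition to write $\Delta\G$ as an extension $\Delta\G' \to \Delta\G \to \Delta\G''$, where $\Delta\G'$ is a simplicial group and $\Delta\G''$ is one of the seven crossed simplicial subgroups of $\Delta\W$ enumerated in table \ref{tab:csgclass}, and then to use the balance hypothesis to pin down $\Delta\G''$ and show $\Delta\G'$ is constant. The first step is to extract cardinality information from balance: the duality $D_\G$ together with canonical factorization gives $|\G_n| = (n+1)|\G_0|$, and the isomorphism $i_n^\ast : \Stab(i) \xrightarrow{\sim} \G_0$ combined with orbit-stabilizer forces the $\G_n$-action on $\{0,\ldots,n\}$ to be transitive with stabilizer of order $|\G_0|$.

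For identifying $\Delta\G''$, transitivity immediately eliminates the trivial and reflexive subgroups, whose images in $\Sigma_{n+1}$ are non-transitive for $n \ge 2$. For each of the three large subgroups (Symmetric, Reflexosymmetric, Weyl), the image $\rho(\G''_n) \subseteq \Sigma_{n+1}$ is all of $\Sigma_{n+1}$; combining $|\Stab(i)| = |\G_0|$ with the short exact sequence $\G'_n \to \Stab(i) \twoheadrightarrow S_n$ forces $|\G'_n| = |\G_0|/n!$, which contradicts the existence of the injective degeneracy section $s_0: \G'_{n-1} \hookrightarrow \G'_n$ of the simplicial group $\Delta\G'$. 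Only $\Lambda$ and $\Xi$ remain.

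Since $|\G'_n| = |\G_n|/|\G''_n| = |\G'_0|$ is now constant, it remains to show that all face maps of $\Delta\G'$ are isomorphisms. The key observation is that the projection $\pi: \Delta\G \to \Delta\W$ preserves canonical factorization, so for $g \in \G'_n$ the pullback $i_n^\ast(g)$ lies in $\G'_0$; the restriction of the balance isomorphism $i_n^\ast : \Stab(i) \xrightarrow{\sim} \G_0$ to $\G'_n \subseteq \G_n^0 \subseteq \Stab(i)$ is therefore an injective group homomorphism $\G'_n \hookrightarrow \G'_0$ between groups of equal cardinality, hence an isomorphism. Decomposing $n_n = (\delta^0)^n$ as an iterated coface identifies this isomorphism with the iterated face map $(d_0)^n : \G'_n \to \G'_0$, and a short induction using the factorization $(d_0)^n = (d_0)^{n-1} \circ d_0$ forces each $d_0 : \G'_n \to \G'_{n-1}$ to be an isomorphism; the analogous argument with the other points $i_n$ handles the remaining face maps, and the simplicial identity $d_i s_i = \mathrm{id}$ forces the degeneracies to be inverses of face maps.

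The main obstacle will be the rule-out of the three large subgroups when $|\G_0|$ is infinite, since the ratio $|\G_0|/n!$ then collapses to $|\G_0|$ and the degeneracy-cardinality contradiction evaporates. In that case the fallback is to descend the duality $D_\G$ itself to $\Delta\G''$ (by verifying $D_\G(\G'_n) \subseteq \G'_n$ from the compatibility of $\pi$ with canonical factorization), which directly yields the balance cardinality $|\G''_n| = (n+1)|\G''_0|$ and rules out each of Symmetric, Reflexosymmetric, and Weyl by table inspection.
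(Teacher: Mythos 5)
Your overall strategy is the paper's: invoke the Fiedorowicz--Loday extension $\Delta\G'\to\Delta\G\to\Delta\G''$, use balance to pin down $\Delta\G''$, and use the isomorphisms $i_n^\ast:\Stab(i)\to\G_0$ to show $\Delta\G'$ is constant (your third paragraph is essentially the paper's kernel argument, with the faces and degeneracies spelled out). The genuine gap is in the step you yourself flag: ruling out the symmetric, reflexosymmetric, and Weyl quotients. Your primary argument rests on the cardinality identity $|\G_n|=(n+1)|\G_0|$ and a counting contradiction with the injective degeneracies, and this collapses when $\G_0$ is infinite. That is not a marginal case here --- the paracyclic and paradihedral categories $\Lambda_\infty$, $\Xi_\infty$ are planar, hence balanced, and have $\G_n\cong\Z$ and $D_\infty$; so the main line of your proof simply does not cover some of the central examples the lemma is meant to describe. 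Your fallback, descending $D_\G$ to a duality on $\Delta\G''$, is only asserted: you would need to show both that $D_\G$ preserves the kernel ($D_\G(\G'_n)\subseteq\G'_n$) and, more than that, that it is compatible with the congruence defining the quotient on \emph{all} hom-sets so that it induces a contravariant equivalence of $\Delta\G''$; ``compatibility of $\pi$ with canonical factorization'' does not by itself deliver this, and no argument is given. (A smaller slip: the kernel of $\Stab(i)\twoheadrightarrow S_n$ is $\G_n^0$, not $\G'_n$, in the reflexosymmetric and Weyl cases; since $\G'_n\subseteq\G_n^0$ this does not destroy the finite-case contradiction, but the sequence as written is wrong. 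Likewise your transitivity claim eliminating the trivial and reflexive quotients uses the same finite orbit--stabilizer count.)

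The paper sidesteps the finiteness issue entirely by phrasing the growth condition as an \emph{index} statement rather than a cardinality statement: using the duality, canonical factorization, and the pullbacks along $\omega_n:[n]\to[0]$, it shows the cosets of $\omega_n^\ast\G_0$ in $\G_n$ are in bijection with $\Hom_\Delta([0],[n])$, i.e.\ $[\G_n:\omega_n^\ast\G_0]=n+1$, which holds for infinite groups as well. Comparing this linear growth with the candidate quotients excludes the three large subgroups of $\Delta\W$, and the trivial and reflexive cases are discarded at the end because a constant-by-(trivial or reflexive) extension is semi-constant and a semi-constant group violates the same index condition. If you want to salvage your proof, the cleanest repair is to replace your cardinality identity with this index computation (or otherwise prove the descent of $D_\G$ in detail); as it stands the infinite case is a hole, not a corner.
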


\begin{proof}
We can consider the injective homomorphisms 
\begin{equation*}
\omega_n^\ast:\G_0 \to \G_n
\end{equation*}
defined by the maps $\omega_n: [n]\to[0]$ in $\Delta$.  Given $k,h\in\G_n$, the cosets of $\omega_n^\ast \G_0$ in $\G_n$ associated to $h$ and $k$ will be the same if and only if, for every $g\in\G_0$, there is an $m\in\G_0$ such that 
\begin{equation*}
(\omega_n^\ast g) k=(\omega_n^\ast m)h
\end{equation*}
that is, if and only if
\begin{equation*}
g\circ\omega_n\circ k=m\circ \omega_n \circ h
\end{equation*}
By canonical factorization $\omega_n^\vee=\gamma\circ a$, and so, applying duality, we have the condition:
\begin{equation*}
k^\vee \circ \gamma\circ a\circ g^\vee=h^\vee \circ \gamma\circ a\circ m^\vee
\end{equation*}
or
\begin{equation*}
(k^\vee)^\ast(\gamma)\circ\gamma^\ast(k^\vee)\circ a\circ g^\vee=(h^\vee)^\ast(\gamma)\gamma^\ast(h^\vee) \circ a \circ m^\vee 
\end{equation*}
So we see that the two cosets will be the same if and only if we have that
\begin{equation*}
(k^\vee)^\ast (\gamma)=(h^\vee)^\ast(\gamma)
\end{equation*}
So that our cosets are in one-to-one correspondence with elements of $\Hom_\Delta([0],[n])$. Therefore, we have that the duality places a `linear growth' condition on the order of the automorphism groups:
\begin{equation*}
\left| \frac{\G_n}{\omega_n^\ast \G_0} \right|=n+1
\end{equation*}
Since we need the functor $\pi$ from the theorem to be surjective, this means that $\Delta\G$ can only be an extension of the trivial, reflexive, dihedral, or cyclic crossed simplicial groups.

In a simplicial group, the group elements act trivially on the morphisms in $\Delta$, so that, in the extension sequence for $\Delta\G$
\begin{equation*}
\Delta\G^\prime\to\Delta\G\to\Delta\G^{\prime\prime}
\end{equation*}
we have that, in $\Delta\G^\prime$
\begin{equation*}
\Stab(i)=\G_n^\prime
\end{equation*}
Now, the condition that $i_n^\ast:\Stab(n)\to\G_0$ be an isomorphism means that 
\begin{equation*}
i_n^\ast:\G_n^\prime\to\G_0^\prime
\end{equation*}
must be an isomorphism. This tells us that $\omega_n^\ast$ is also an isomorphism for all $n$, and so $\Delta\G^\prime$ must be constant.  Now, note that an extension of the trivial or reflexive groups by a constant simplicial group must itself be semi-constant, hence it cannot be equipped with a duality. Therefore, the only cases left are extensions of $\Lambda$ and $\Xi$ by constant simplicial groups.
\end{proof}

\begin{rem}
While a more complete classification of balanced crossed simplicial groups in terms of the Fiedorowicz-Loday exact sequence could provide a clear topological interpretation of the meaning of a `crossed simplicial field theory' (once such a notion has been defined), lemma \ref{lem:ballem} already provides us with some insight into a topological meaning of balanced-ness. 

As we will see in the next section, graphs structured over a planar crossed simplicial group correspond to bordisms equipped with a certain reduction of the structure group. In particular, graphs structured over $\Lambda$ correspond to oriented bordisms, and graphs structured over $\Xi$ correspond to unoriented bordisms. From the lemma we can see that, given a balanced crossed simplicial group $\Delta\G$, a $\Delta\G$-structure on a graph $\Gamma$ amounts to a pair of lifts
\begin{equation*}
\xymatrix{
 & \mathcal{G}\ar[d]\\
 & \mathcal{K}\ar[d] \\
 I(\Gamma)\ar[r]\ar[ur]\ar[uur] & \mathbf{FSet}
}
\end{equation*}
of the incidence diagram (where $\mathcal{K}$ is either $\Lambda$ or $\Xi$). Since the intermediate lift has a topological significance, the lemma tells us that, in some sense, graphs structured over a balanced crossed simplicial group correspond to oriented or unoriented bordisms, together with some additional structure. In all the worked examples, this additional structure appears to, itself, be topological in nature. We might therefore loosely conjecture that every balanced crossed simplicial group corresponds to a topologically defined bordism category.
\end{rem}
 
\section{The $G$-Structured Cobordism Category}

We are interested in topological field theories on a very specific bordism category. Before defining that, however, we recall some more basic constructions.

\subsection{$2\text{Cob}$}
We here follow \cite{K} in giving a brief description of the 2-dimensional oriented cobordism category. 

\begin{defn}
A \emph{strict cobordism} from $S_0$ to $S_1$ (closed manifolds of dimension $n-1$) is a manifold with boundary $(M,\partial M)$ of dimension $n$ along with inclusion maps 
\begin{equation*}
f: S_0 \rightarrow M \leftarrow S_1: g
\end{equation*} 
such that 
\begin{equation*}
(f,g):S_0\sqcup S_1 \overset{\cong}{\rightarrow} \partial M
\end{equation*}
is a diffeomorphism. We say two strict cobordisms are equivalent if there is a diffeomorphism $M\to M_\prime$ fixing the boundary such that TFDC
\begin{equation*}
\xymatrix{
 & M^\prime & \\
 S_0\ar[dr]\ar[ur] & & S_1\ar[dl]\ar[ul] \\
 & M\ar[uu]^{\cong} &
}
\end{equation*}
\end{defn}

To glue strict cobordisms, we note that, by the regular interval theorem (\cite{K} 1.2.3), for any strict cobordism $(M,S_0,S_1)$, we can find a collar neighborhood $S_0\times [0,1]$ in $M$ (and similarly for $S_1$). We can then glue the collar neighborhoods by the rule that 
\begin{equation*}
S_0\times[0,1] S_0\times [0,1]\xymatrix@C=1.5em{{}\ar@{~>}[r]&{}} S_0\times [0,2]
\end{equation*}  

The equivalence class of the strict cobordism created by such a gluing is uniquely defined. More precisely (see \cite{K} 1.2.1) 
\begin{thm}\label{thm:smoothcob}
Let $(M_1,S_0,S_1)$ and $(M_2,S_1,S_2)$ be two strict cobordisms. There exists a smooth structure on $M_1M_2:=M_1\bigsqcup_{S_1} M_2$ such that $M_1\to M_1M_2$ and $M_2\to M_1M_2$ are diffeomorphisms onto their image. This smooth structure is unique up to diffeomorphism fixing $(S_0,S_1,S_2)$. 
\end{thm}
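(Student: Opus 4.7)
The plan is to construct the smooth structure using collar neighborhoods as suggested by the preceding discussion, and then obtain uniqueness via the uniqueness of collars up to isotopy. First, I would apply the regular interval theorem (in the form of the collar neighborhood theorem) to produce smooth embeddings $c_1: S_1\times [0,1]\to M_1$ and $c_2: S_1\times [-1,0]\to M_2$ whose restrictions to $S_1\times \{0\}$ recover the given identifications of $S_1$ with a boundary component of each $M_i$. On the topological pushout $M_1M_2=M_1\sqcup_{S_1}M_2$, one can then declare an atlas consisting of the pre-existing charts on $M_i\setminus S_1$ together with a distinguished neighborhood of $S_1$, namely
\begin{equation*}
c_1\bigl(S_1\times [0,1]\bigr)\cup_{S_1} c_2\bigl(S_1\times [-1,0]\bigr)\;\cong\; S_1\times[-1,1],
\end{equation*}
endowed with the product smooth structure. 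Overlap smoothness away from $S_1$ is immediate, and on the collar neighborhood it follows from the fact that $c_1$ and $c_2$ are smooth embeddings, so the gluing produces a globally smooth atlas.

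For uniqueness, given two smooth structures on $M_1M_2$ arising from collar choices $(c_1,c_2)$ and $(c_1',c_2')$, I would produce a diffeomorphism between them fixing $S_0\sqcup S_1\sqcup S_2$. This rests on the standard fact that any two collar embeddings of a closed boundary component are isotopic through collar embeddings rel the boundary. Choosing such isotopies on each side of $S_1$, and combining them with a suitable bump function in the collar direction, one obtains a diffeomorphism that is the identity on $M_i$ away from a neighborhood of $S_1$ and interpolates smoothly between the two collar structures across $S_1$. This gives the required equivalence of smooth cobordisms in the sense of the preceding definition.

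The main obstacle is verifying smoothness precisely at the gluing locus $S_1$, since the pushout topology has no a priori smooth atlas there. Committing to collar-based charts on both sides of $S_1$ sidesteps this by reducing the question to the compatibility of smooth collar embeddings, after which the remaining chart overlaps are routine. The subtler point is the uniqueness statement, where one must carefully interpolate between two pairs of collars without disturbing the smooth structures already present on the interiors of $M_1$ and $M_2$; this is exactly the content of the collar isotopy theorem, and its invocation is the key nontrivial input. Once this is in hand, the rest of the argument is a diagram-chase in the category of smooth manifolds with corners.
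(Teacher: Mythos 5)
Your collar-based construction together with the collar-isotopy argument for uniqueness is exactly the standard proof; the paper itself does not prove this statement but cites it from \cite{K}, and its surrounding discussion (the regular interval theorem and collar gluing) sets up precisely the approach you take, so your proposal matches it. The only point worth making explicit is that an arbitrary smooth structure on $M_1M_2$ for which both inclusions are diffeomorphisms onto their images admits a bicollar of $S_1$ whose halves are collars in $M_1$ and $M_2$, hence is itself of the form produced by your construction, so comparing two collar-built structures does cover the full uniqueness claim.
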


\begin{defn}
Given an oriented strict cobordism $(M,S_0,S_1)$, we call $S_i$ an \emph{in-boundary} if its orientation matches that induced by the orientation on $M$, and an \emph{out-boundary} otherwise. In such a case, we only allow gluings out-boundary to in-boundary, so that there is a unique orientation defined on the composition.
\end{defn}

\begin{defn}
The category $2\text{Cob}^{cl}$, the \emph{closed two-dimensional oriented cobordism category}, has objects given by disjoint unions of (oriented) circles, and morphisms given by equivalence classes of oriented strict cobordisms. 
\end{defn}

This definition, however, is quite strict in some of it's requirements. We can relax the definition a little to get a more useful notion. 

\begin{defn}
A \emph{cobordism} $(M,S_0,S_1)$ is defined in precisely the same way as a strict cobordism, except that $S_0$ and $S_1$ are no longer expected to exhaust the boundary of $M$, but instead, be embedded into it. Gluing and orientation still operate in precisely the same way, and do the definitions of the in- and out-boundaries. If $\partial M_{in}$ and $\partial M_{out}$ denote the in- and out-boundaries respectively, we call $\partial M_{free}:=\partial M\setminus (\partial M_{in}\sqcup \partial M_{out})$ the \emph{free boundary} of the cobordism. 
\end{defn}

\begin{rem}
There is an version of Theorem \ref{thm:smoothcob} for cobordisms. 
\end{rem}

Now, we can modify our definition somewhat (for more details, see, eg, \cite{B} definition 1.1).

\begin{defn}
The category $2\text{Cob}$, the \emph{two-dimensional oriented cobordism category}, is defined to have objects closed oriented 1-manifolds (disjoint unions of circles and intervals), and morphisms given by equivalence classes of oriented cobordisms. $2\text{Cob}^{cl}$ is a (not full) subcategory of $2\text{Cob}$. 

There is another important subcategory of $2\text{Cob}$, the \emph{open cobordism category} $2\text{Cob}^o$, which is the \textbf{full} subcategory whose objects are disjoint unions of intervals. 
\end{defn}

\begin{rem}
By a historical accident of linguistics and notation, the terms cobordism and bordism have come to be used for the same objects. In this paper, we will favor the term cobordism. 
\end{rem}

\begin{rem}
The categories above are examples of a more general notion of \emph{cobordism category}, for which we will not give an exact definition. In general, we will use the term to refer to any category whose morphisms are defined as equivalence classes of cobordisms of some sort (possibly with additional structure, as we will see shortly).  
\end{rem}

\subsection{$\Cob$ and $\Cor$}

We now turn our attention to cobordism categories with special relevance to the notions developed in the first 3 sections.

\begin{defn}
For a planar Lie Group $G$, a \emph{$\mathbb{G}$-structured surface} is a manifold (possibly with boundary) $M$ equipped with a $\mathbb{G}$-principal bundle $F_\mathbb{G}\to M$ and a reduction of the structure group equivariant along $p_\mathbb{G}: \mathbb{G}\to GL(2,\R)$
\begin{equation*}
\pi:F_\mathbb{G}\to Fr_M
\end{equation*} 

A \emph{structured diffeomorphism} 
\begin{equation*}
(\phi,\tilde{\phi}): (M,F_\mathbb{G}, \pi)\to (M^\prime,F_\mathbb{G}^\prime,\pi^\prime)
\end{equation*}
is a pair where $\phi:M\to M$ is a diffeomorphism, and $\tilde{\phi}:F_\mathbb{G} \to F_\mathbb{G}^\prime$ is a $\mathbb{G}$-equivariant diffeomorphism such that TFDC
\begin{equation*}
\xymatrix{
F_\mathbb{G}\ar[d]_\pi \ar[r]^{\tilde{\phi}} & F_\mathbb{G}^\prime\ar[d]^{\pi^\prime} \\
Fr_M \ar[r]_{d\phi} & Fr_{M^\prime}
}
\end{equation*}
\end{defn}

\begin{exmps} 
\begin{enumerate}
\item An $GL^+(2,\R)$-structured surface is simply an oriented surface.
\item An $GL(2,\R)$-structured surface has no additional data.
\item A $\tilde{GL}^{+}(2,\R)$-structured surface is an oriented surface together with an chosen isotopy class of trivializations of the frame bundle (see \cite{DK} examples 3.3 for more details).
\end{enumerate}
\end{exmps}

\begin{defn}
For a planar lie group $\mathbb{G}$, the \emph{$\mathbb{G}$-structured interval} $\scri$ is the interval $I=[0,1]$ equipped with the reduction of structure group
\begin{equation*}
\mathbb{G}\times I\to Fr(TI\oplus\R)
\end{equation*} 
Where the map in question is given by the homomorphism
\begin{equation*}
\mathbb{G}\times I\to GL(2,\R)\times I
\end{equation*}
and the identification $GL(2,\R)\cong Fr(TI\oplus\R)$ given by choosing the frame $\partial_x\in TI$ (where $x$ is the coordinate on $I$) and $1\in\R$. 

A \emph{structured embedding of $\scri$} into a structured surface $(M,F_\mathbb{G})$ is data $(f,\hat{f},\tilde{f})$  where $f:I\to M$ is a $C^\infty$ embedding, $\hat{f}:Fr(TI\oplus\R)\to Fr_M$ extends $df$, and $\tilde{f}:\mathbb{G}\otimes I\to F_\mathbb{G}$ is a bundle map covering $\hat{f}$.  
\end{defn}

\begin{defn}
A \emph{$G$-structured cobordism} $(M,S_0,S_1)$ is a $G$-structured surface with boundary $(M,F_\mathbb{G})$, with
\begin{equation}
S_i=\bigsqcup_{\ell_i} \scri
\end{equation}
Equipped with structured embeddings 
\begin{equation*}
(f,\tilde{f}):S_0\to (M,F_\mathbb{G}) \leftarrow S_1:(g,\tilde{g})
\end{equation*}
with image in $\partial M$. 

Two such structured cobordisms $M_1$ and $M_2$ are considered equivalent if there is a structured diffeomorphism $(\phi,\tilde{\phi})$ fixing the boundary such that TFDC
\begin{equation*}
\xymatrix{
 & M_2 & \\
 S_0\ar[dr]\ar[ur] & & S_1\ar[dl]\ar[ul] \\
 & M_1\ar[uu]_{(\phi,\tilde{\phi})} &
}
\end{equation*}
\end{defn}

\begin{rem}
Notice that, in the case of a circle $S^1$ embedded in the boundary of $M$, there is the possibility of the datum of a reduction of the structure group
\begin{equation*}
F_{S^1}\to Fr(TS^1\oplus\R)
\end{equation*} 
being non-trivial (see, for example, \cite{NR}, 2.4). However, in the case of an embedded interval $I$, contractability allows us to argue that this additional structure is trivial. This will be important in the construction of the cobordism category.
\end{rem}

\begin{rem}
When discussing in- and out- boundaries for $\mathbb{G}$-structured cobordisms, there is a subtle distinction to be made between two cases. In general, we refer to $S_0$ as the in-boundary and $S_1$ as the out-boundary. However, if the map $\mathbb{G}\to GL(2,\R)$ factors through $GL^+(2,\R)$, then the choice of a reduction of structure group includes a choice of orientation. Moreover, the choice of structured embedding defines an outward normal by the image of $1\in\R$ under $\hat{f}$. We require that, for $S_0$ this point inwards, and for $S_1$, this point outwards. 
\end{rem}

\subsubsection{Gluing Structured Cobordisms}

We first consider the cylinder over a structured boundary component
\begin{equation*}
S=\bigsqcup_{\ell_i} \scri
\end{equation*}
That is, the cobordism given by 
\begin{eqnarray*}
C & = & S\times [0,1]\\
\pi_C: \mathbb{G}\times S\times [0,1] & \to & Fr(TC)\\
\end{eqnarray*}
With the obvious boundary embeddings

We can glue two such cylinders over $(S,_SF)$ $C_1$ and $C_2$ simply by extending the interval to [0,2]. This yields, as above, a cobordism $C_1C_2:=S\times[0,2]$ such that $C_i\to C_1C_2$ is a structured diffeomorphism onto its image. 

Now, given two more general structured cobordisms $(M_1,S_0,S_1)$ and $(M_2,S_1,S_2)$, we try to find a "trivializing collar neighborhood", that is, a neighborhood of $S_1$ and a structured diffeomorphism to the cylinder over $S_1$. 
\begin{lem}
There is a neighborhood of $S_1$ in $M_i$ that is equivalent to the structured cylinder over $S_1$ via structured diffeomorphism. 
\end{lem}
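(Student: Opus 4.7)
The plan is to combine the classical collar neighborhood theorem with an argument that the extra $\mathbb{G}$-structure extends essentially uniquely off the boundary because $S_1$ is a disjoint union of intervals.

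First I would apply the regular interval theorem to the underlying smooth manifold to produce a neighborhood $N$ of $S_1$ in $M_i$ together with a diffeomorphism
\begin{equation*}
\phi:S_1\times[0,\epsilon)\longrightarrow N
\end{equation*}
which restricts to the given embedding on $S_1\times\{0\}$. Differentiating and taking frames gives a bundle isomorphism
\begin{equation*}
d\phi: Fr(T(S_1\times[0,\epsilon))) \xrightarrow{\;\cong\;} Fr_{M_i}|_N
\end{equation*}
extending the data of the structured embedding.

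Next I would lift $d\phi$ to the principal $\mathbb{G}$-bundles. On the cylinder side we have the trivial bundle $\mathbb{G}\times S_1\times[0,\epsilon)$, and on $N$ we have $F_\mathbb{G}|_N$; the task is to produce a $\mathbb{G}$-equivariant bundle map $\tilde\phi$ covering $d\phi$ and compatible with both reductions of structure group, whose restriction to $S_1\times\{0\}$ is the bundle-map part $\tilde f$ of the given structured embedding. Since $S_1=\bigsqcup_{\ell_1}\scri$ is a disjoint union of contractible pieces, the pullback bundle $\phi^{*}F_\mathbb{G}$ on $S_1\times[0,\epsilon)$ is a principal $\mathbb{G}$-bundle over a disjoint union of contractible sets, hence trivializable; moreover the trivialization can be chosen to restrict to $\tilde f$ on $S_1\times\{0\}$ because a section over the closed subset $S_1\times\{0\}$ of the contractible base extends to a section over the whole base. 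This extension supplies $\tilde\phi$.

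Finally I would check that this lift is actually a morphism of $\mathbb{G}$-structured surfaces, that is, it intertwines the two reductions $\pi_{\text{cyl}}$ and $\pi\circ\tilde\phi$ of the structure group to $Fr$. The two bundle maps $F_\mathbb{G}|_N\to Fr_{M_i}|_N$ obtained from $\pi$ and from transporting $\pi_{\text{cyl}}$ through $\tilde\phi$ agree over $S_1\times\{0\}$ by the structured-embedding datum, and they are both $\mathbb{G}$-equivariant lifts of the same map $d\phi$; over each contractible component of $S_1\times[0,\epsilon)$ such lifts form a torsor over maps into the (discrete, by the connective covering hypothesis) kernel $\ker(p_\mathbb{G})$, so an agreement on a nonempty subset forces agreement everywhere. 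I expect this last compatibility to be the only subtle step; everything else is the standard collar theorem together with triviality of bundles over contractible bases.
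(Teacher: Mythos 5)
Your overall strategy (smooth collar plus triviality of the $\mathbb{G}$-bundle over the contractible pieces, matched against the boundary data) is the same as the paper's, but the verification in your last paragraph contains a genuine gap. Two maps $F_\mathbb{G}|_N\to Fr_{M_i}|_N$ that are equivariant along $p_\mathbb{G}$ and cover the same base map do \emph{not} form a torsor over maps into $\ker(p_\mathbb{G})$: evaluating on a trivializing section, two such maps differ by an arbitrary continuous map from the base into $GL(2,\R)$ (suitably conjugation-twisted), so discreteness of the kernel gives no rigidity at this point. Concretely, take $\mathbb{G}=GL^+(2,\R)$ with $p_\mathbb{G}$ the inclusion, so $\ker(p_\mathbb{G})$ is trivial and the reduction is just an orientation. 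A trivialization of $\phi^{*}F_\mathbb{G}$ extending $\tilde f$ is then an arbitrary positively oriented frame field on the collar extending the boundary frame field, whereas compatibility of $\tilde\phi$ with the two reductions forces that frame field to be exactly $d\phi$ applied to the standard cylinder frame field; an arbitrary extension is almost never of that form, yet your rigidity claim would assert that it automatically is. So the $\tilde\phi$ produced in your second paragraph need not be a structured diffeomorphism, and the third paragraph does not repair this.

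The repair is to build the compatibility into the choice of section rather than checking it afterwards. Push the standard cylinder frame field forward by $d\phi$; it lies in the image of $\pi$ along $S_1\times\{0\}$ by the structured-embedding datum, and since the image of $\pi$ is an open and closed sub-bundle of $Fr_{M_i}|_N$ and each collar component is connected, it lies in the image everywhere. Its preimage under $\pi$ is then a principal $\ker(p_\mathbb{G})$-covering of the collar, i.e.\ an honest covering space; the value of $\tilde f$ on the identity section gives a section of this covering over $S_1\times\{0\}$, which extends uniquely over each (simply connected) collar component. Defining $\tilde\phi(g,x)=s(x)\cdot g$ from this section $s$ yields a bundle map intertwining $\pi_{\mathrm{cyl}}$ and $\pi$ by construction, and this is where discreteness of $\ker(p_\mathbb{G})$ genuinely enters --- for existence and uniqueness of the \emph{compatible} extension, not for upgrading an arbitrary one. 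This is, in effect, the content of the paper's terser proof: over the contractible collar one trivializes the structured bundle, i.e.\ the bundle together with its reduction, compatibly with the trivialization on $\scri$ given by the structured embedding.
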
   
\begin{proof}
We prove the lemma for a single boundary interval. Take a smooth collar neighborhood $C$ of $I$ in $M$. Since $C$ is contractible, we can trivialize $F_\mathbb{G}$ over $C$, and can identify this trivialization with the one on $I$ defined by the structured embedding of $\scri$.
\end{proof}

This allows us to define a gluing of the two cobordisms, using the gluings for cylinders. 

\begin{prop}
Let $(M_1,S_0,S_1)$ and $(M_2,S_1,S_2)$ be two structured cobordisms. There exists a $\mathbb{G}$-structure on $M_1M_2:=M_1\bigsqcup_{S_1} M_2$ such that $M_1\to M_1M_2$ and $M_2\to M_1M_2$ are structured diffeomorphisms onto their image. This structure is unique up to structured diffeomorphism fixing $(S_0,S_1,S_2)$. 
\end{prop}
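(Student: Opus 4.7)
The plan is to mirror the proof of Theorem \ref{thm:smoothcob} for smooth cobordisms, but upgrading each step to keep track of the $\mathbb{G}$-structure, using the collar trivialization established in the preceding lemma as the key new input.

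First, I would invoke Theorem \ref{thm:smoothcob} directly to obtain a smooth manifold structure on $M_1 M_2 = M_1 \bigsqcup_{S_1} M_2$ such that the inclusions $M_i \hookrightarrow M_1 M_2$ are smooth diffeomorphisms onto their images. This reduces the problem to constructing a compatible $\mathbb{G}$-principal bundle $F_\mathbb{G}$ on $M_1 M_2$, together with a reduction of structure group $\pi$, whose restriction to each $M_i$ reproduces the given structured cobordism data.

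Next, I would use the lemma to choose trivializing collar neighborhoods $U_1 \subset M_1$ and $U_2 \subset M_2$ of $S_1$, each structurally diffeomorphic to the cylinder $C = S_1 \times [0,1]$ over $S_1$. Under these identifications, both $F_{\mathbb{G},1}|_{U_1}$ and $F_{\mathbb{G},2}|_{U_2}$ are identified with the pullback of the standard cylinder bundle $\mathbb{G} \times S_1 \times [0,1]$, and the reduction maps are the standard ones. I would then glue $F_{\mathbb{G},1}$ and $F_{\mathbb{G},2}$ along the common cylinder structure just as one glues two cylinders to obtain $S_1 \times [0,2]$, and similarly glue the frame bundle maps. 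The resulting $\mathbb{G}$-bundle on $M_1 M_2$ has, by construction, a well-defined reduction of structure group to $\operatorname{Fr}_{M_1 M_2}$, and by construction each $M_i \hookrightarrow M_1 M_2$ is a structured diffeomorphism onto its image.

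For uniqueness, I would argue that any two $\mathbb{G}$-structures $(F_\mathbb{G}, \pi)$ and $(F_\mathbb{G}', \pi')$ on $M_1 M_2$ which restrict to the given structures on $M_1$ and $M_2$ agree on the complement of a collar of $S_1$. On the collar itself the lemma produces structured diffeomorphisms of each to the standard cylinder over $S_1$, and these match on the ends $S_0, S_1, S_2$. Gluing these collar diffeomorphisms with the identity off the collar gives the required structured diffeomorphism fixing the boundary. The main obstacle I anticipate is not conceptual but bookkeeping: one must ensure that the trivializations chosen on $U_1$ and $U_2$ are genuinely compatible at $S_1$ (so that the bundle gluing is well-defined rather than only well-defined up to a locally-constant $\mathbb{G}$-valued function), and that the smoothing of the collars in $M_1 M_2$ does not disturb the reduction of structure group — both issues dissolve once one notes, as in the lemma, that contractibility of the collar allows us to rigidify the trivialization to the one pulled back from $\scri$.
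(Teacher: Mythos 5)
Your existence argument is essentially the paper's: smooth the underlying manifold via Theorem \ref{thm:smoothcob}, then glue the principal bundles over trivializing collar neighborhoods supplied by the preceding lemma, using contractibility of the collar to match the collar trivializations with the ones determined by the structured embeddings of $\scri$. That half is fine.

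The uniqueness half has a genuine gap. First, two $\mathbb{G}$-structures on $M_1M_2$ for which the inclusions are structured diffeomorphisms onto their images do not literally ``agree on the complement of a collar of $S_1$'': what you are given are lifts $\tilde\iota_i\colon F_{M_i}\to F\vert_{M_i}$ and $\tilde\iota_i'\colon F_{M_i}\to F'\vert_{M_i}$, hence bundle isomorphisms $h_i=\tilde\iota_i'\circ\tilde\iota_i^{-1}$ over each half, and the entire content of uniqueness is that these two isomorphisms, which need not coincide over $S_1$ (the two structures may differ precisely in how the halves are glued across $S_1$), can be corrected to a single structured diffeomorphism fixing $(S_0,S_1,S_2)$. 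Second, your proposed correction --- identify each structure with the standard cylinder on a collar of $S_1$ and glue these identifications with the identity off the collar --- does not produce a well-defined map: the structured diffeomorphisms to the cylinder furnished by the lemma are not the identity on the outer ends of the collars, so they fail to match the identity there; making them match requires a uniqueness-of-collars/isotopy argument (rel the outer boundary and rel $S_1$) that you have not supplied. The paper sidesteps exactly this by applying Theorem \ref{thm:smoothcob} not to the base but to the total spaces $F_{M_1}$ and $F_{M_2}$: the explicit trivializations over the embedded boundary intervals give a canonical identification of the boundaries of the total spaces, the smooth gluing uniqueness then yields a diffeomorphism $\tilde\phi\colon F'\to F_{cyl}$ commuting with the inclusions of the bundles, and this descends to a diffeomorphism of $M_1M_2$ whose $\mathbb{G}$-equivariance follows since it commutes with the structured inclusions. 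If you wish to keep your collar-by-hand route, you must prove the structured analogue of uniqueness of collars and show the resulting isotopy can be damped out to the identity away from $S_1$.
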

\begin{proof}
Firstly, we know that one such structure $F_{cyl}$ exists. Now let $(M_1M_2,F^\prime)$ be a second such structure. We can apply theorem \ref{thm:smoothcob} to $F_{M_2}$ and $F_{M_1}$ to see that there is a diffeomorphism $\tilde{\phi}:F^\prime \to F_{cyl}$ commuting with the inclusions of the bundles $F_{M_2}$ and $F_{M_1}$ (note that we can only apply the theorem because of the explicit choice of trivialization of the bundle over the embedded boundary components implicit in our choice of structured embedding, which gives us a canonical identification of the boundaries of the total spaces). This descends to a diffeomorphism $\phi:M_1M_2 \to M_1M_2$. 

The only thing, then, remaining to be checked, is $\mathbb{G}$-equivariance. However, this is clear since $\tilde{\phi}$ must commute with the structured inclusion of $M_1$ and $M_2$. 
\end{proof}

\begin{defn}
The \emph{$\mathbb{G}$-structured cobordism category} $\Cob$ has objects closed structured 1-manifolds, and morphisms equivalence classes of $\mathbb{G}$-structured cobordisms equipped with linear orderings of the inputs and outputs. 
\end{defn}

\begin{rem}
While it has not been discussed above, all of the cobordism categories defined above have the obvious monoidal structure (disjoint union), making them into symmetric monoidal categories. 
\end{rem}

\begin{rem}
There is another way to think about the various boundary components of a cobordism. First note that, for an open structured cobordism $(M, S_0,S_1)$, we can think of $M$ as a surface $\Sigma$ with boundary punctured at a set of interior marked points corresponding to the components of the boundary that contain no images of points in $S_0$ or $S_1$. We will call this set of marked points $P^\circ$. Additionally, for a boundary component $A$ that does contain intervals $\{I_j\}_j$ from $S_0$ or $S_1$, we can define a set of marked points for $A$ by choosing one point in each component of $A\setminus \bigcup_j I_j$. We call the set of these points $P^\partial$.  

From this construction, we can represent our cobordism $(M,S_0,S_1)$ as a marked surface $(\Sigma,P)$ where $P=P^\circ\cup P^\partial$ , equipped with a $\mathbb{G}$-structure on $\Sigma\setminus P$. Up to structured diffeomorphism, we can retrieve $(M,S_0,S_1)$ from this data. 

We will in general refer to the marked points $P^\circ$ in the interior as \emph{punctures} of $\Sigma$. 
\end{rem}

There is a connection between $\Cob$ and the more algebraic/combinatorial formalism developed in previous sections. This comes via the use of an intermediate category $\Cor$.

\begin{defn}
The \emph{structured corolla category} $\Cor$ is the symmetric monoidal category whose objects are disjoint unions of an object $\mathscr{I}$ (including the empty union), and in which a morphism
\begin{equation*}
\mathscr{I}^{\otimes n} \to \mathscr{I}^{\otimes m}
\end{equation*}
is given by an equivalence class of $\Delta\G$-structured augmented graphs with $n$ incoming half-edges and $m$ outgoing half-edges, equipped with an ordering of the incoming and of the outgoing half-edges.
\end{defn}

\begin{rem}
Note that the concept of a $\Delta\G$-structured graph with only one vertex and no half-edges is meaningless under the definitions given.  
\end{rem}

\subsection{Topological Field Theories}

\begin{defn}
A \emph{Topological Field Theory (TFT)} is a symmetric monoidal functor from a cobordism category $\text{Cob}$ to an abelian category $\mathcal{C}$. 
\begin{equation*}
Z: \text{Cob} \to \mathcal{C}
\end{equation*}
For our purposes, we will consider functors to $\text{Vect}_k$ for some field $k$.
\end{defn}

\begin{exmp}
It is folkloric that TFT's on $2\text{Cob}^o$ are frobenius algebras over $k$.
\end{exmp}

\begin{defn}
Let $\GOTFT_k$ ($G$-structured open TFT over $k$) 
be the category 
\begin{equation*}
\GOTFT_k=\text{Fun}^\otimes(\Cob,\Vect_k)
\end{equation*}
whose objects are topological field theories:
\begin{equation*}
Z: \Cob \to \text{Vect}_k
\end{equation*}
and whose morphisms are natural transformations.
\end{defn}

\begin{defn}
Let $\CSFT_k$ (crossed simplicial field theory over $k$) 
be the category
\begin{equation*}
\CSFT_k=\text{Fun}^\otimes (\Cor,\Vect_k)
\end{equation*}
whose objects are topological field theories
\begin{equation*}
Z:\Cor\to \Vect_k
\end{equation*}
and whose morphisms are natural transformations.
\end{defn}

\begin{rem}
In the construction of $\Cob$, we implicitly included as morphisms equivalence classes of structured surfaces without any boundary components. However, within the framework of $\text{GOTFT}_k$, these morphisms correspond to choices of field automorphisms  (multiplication by elements in the field). Their compositions only yield more morphisms without boundary components, and they cannot be realized as the composition of morphisms with boundary components. Since, for any field $k$, one can arbitrarily assign field elements to connected closed manifolds without boundary, these morphisms are not relevant to a classification of GOTFT's. For the remainder of this paper, including the following theorem, we will assume that these morphisms are not included in $\Cob$.  
\end{rem}

\begin{thm}\label{thm:cobequiv}
There is an equivalence of categories $\Cob \cong \Cor$.
\end{thm}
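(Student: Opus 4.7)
The plan is to construct two symmetric monoidal functors
\begin{equation*}
T: \Cor \longrightarrow \Cob, \qquad S: \Cob \longrightarrow \Cor
\end{equation*}
that are mutually inverse up to natural isomorphism. The functor $T$ is a \emph{thickening} construction: given a $\Delta\G$-structured augmented graph $\Gamma$, regard $|\Gamma|$ as embedded in an oriented surface $\Sigma_\Gamma$ (for instance, take $\Sigma_\Gamma$ to be a regular neighborhood of $\Gamma$ inside $\R^3$, or more intrinsically the surface obtained by gluing disks along the corollas at each vertex), and use the $\Delta\G$-structure on $\Gamma$ to canonically specify the $\mathbb{G}$-reduction of the frame bundle on $\Sigma_\Gamma \setminus V(\Gamma)$, then extend across the vertices by removing a small neighborhood of each vertex (turning each vertex into a puncture, or a boundary component if it lies on an external edge). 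The augmentation data at external half-edges cuts out the structured intervals $\scri$ forming the in- and out-boundaries. The functor $S$ is a \emph{spine} construction: for a $\mathbb{G}$-structured cobordism $(M,S_0,S_1)$, choose an embedded graph $\Gamma \hookrightarrow M$ that is a deformation retract of $M$ minus the free boundary and the boundary intervals of $S_0 \sqcup S_1$, with one external half-edge terminating on each interval of $S_0 \sqcup S_1$ (incoming or outgoing as indicated), and apply Proposition~\ref{prop:structgraphprop} to endow $\Gamma$ with a canonical $\Delta\G$-structure.

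First I would verify that each of $T$ and $S$ is well-defined on equivalence classes. For $T$, the key point is that two different thickenings of the same structured graph are connected by a structured diffeomorphism: the combinatorial data (incidence category, $\cG$-valued lift of the incidence diagram, augmentations) determines the gluing pattern of the vertex-neighborhoods and boundary intervals up to canonical choices, and the $\mathbb{G}$-reduction is pieced together using the equivalence $\mathcal{C}_G \simeq \cG$ to identify each half-edge with a structured interval on the boundary of the vertex-disk. For $S$, the key point is that any two spines of the same cobordism are related by a finite sequence of edge-contractions and their inverses, and these are precisely the equivalences of structured graphs defined in the previous section (invoking the pullback lemma and its corollary to guarantee that the $\Delta\G$-structure behaves correctly under contraction of a single edge).

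Second, I would verify functoriality: that is, that gluing two cobordisms along a common boundary component corresponds exactly to concatenation of the associated structured graphs at the matching external half-edges. This reduces to a local model computation in a collar neighborhood of the shared boundary (which, by the trivializing collar lemma proved just before the definition of $\Cob$, is a structured cylinder), together with the observation that attaching a structured cylinder at an external half-edge does not change the equivalence class of the associated graph.

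Finally, one checks that $T \circ S \simeq \mathrm{Id}_{\Cob}$ and $S \circ T \simeq \mathrm{Id}_{\Cor}$. The first is the statement that the thickening of a spine of $M$ is structured-diffeomorphic to $M$, which follows because a regular neighborhood of a spine is, by definition, homotopy-equivalent to $M$, and both carry compatible $\mathbb{G}$-structures determined by the same combinatorial data. The second is the statement that any spine of a thickened graph $\Gamma$ is equivalent to $\Gamma$ itself. Symmetric monoidality of both $T$ and $S$ is immediate from the fact that disjoint union of graphs corresponds to disjoint union of thickenings.

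The main obstacle will be the well-definedness of $S$ on equivalence classes of structured cobordisms: one must show that any two embedded spines in a $\mathbb{G}$-structured surface can be connected by a finite sequence of ambient structured isotopies followed by edge-contractions, and that under these moves the induced $\Delta\G$-structures from Proposition~\ref{prop:structgraphprop} agree via the contraction equivalence. This is where the balanced hypothesis and the pullback lemma of Section 2 are essential, since they guarantee that the lifted incidence diagram transforms correctly under the elementary move of collapsing an edge between two trivalent or higher-valent vertices.
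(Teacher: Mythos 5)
Your overall architecture (a spine functor $\Cob\to\Cor$ and a thickening construction in the other direction) is the same one the paper uses, but as written the plan has genuine gaps at exactly the points where the real work lies. First, the well-definedness of the spine construction on equivalence classes: you correctly flag as ``the main obstacle'' that any two spines must be connected by contractions and their inverses with compatible induced $\Delta\G$-structures, but you never supply the argument, and you attribute its eventual resolution to the balanced hypothesis and the pullback lemma of Section 2. Those only guarantee that a single-edge contraction of a structured graph exists; they do not connect two arbitrary spines. The actual input is geometric: for \emph{stable} marked surfaces, ideal triangulations are in bijection with isotopy classes of $3$-valent spanning graphs (Proposition \ref{prop:spangraph}), any two ideal triangulations are related by $2$-$2$ Pachner moves, each Pachner move factors through edge contractions of the dual graphs, and the induced $\Delta\G$-structures agree because the structure of Proposition \ref{prop:structgraphprop} is defined locally (together with the reconstruction statement of Lemma \ref{lem:structrecon} for the boundary trivializations determined by the augmentations). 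Without restricting to $3$-valent spanning graphs and invoking this triangulation machinery, ``any two spines are related by contractions'' is an unproved assertion, not a reduction.

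Second, the stability hypothesis is not cosmetic: the triangulation/spanning-graph correspondence fails for the degenerate cobordisms ($D^2$ with one or two boundary intervals, $S^2$ with one or two marked points, $\R P^2$ with one marked point), and these must be assigned graphs and checked by hand; in particular $D^2$ with two boundary intervals carries a whole family of inequivalent structured cobordisms indexed by an automorphism of $[1]$, matched bijectively with structured graphs only via the boundary-trivialization argument of Lemma \ref{lem:structrecon}. Your plan is silent on all of these cases. Finally, your description of the thickening is not quite right: a regular neighborhood of $|\Gamma|$ in $\R^3$ is not a surface canonically attached to $\Gamma$, and the thickened surface need not be orientable (for $\Xi$, $\Xi_N$, or the quaternionic cases it generally is not). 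The correct statement is that the $\Delta\G$-structure induces, via the canonical functor $\Delta\G\to\Lambda$ or $\Delta\G\to\Xi$, a ribbon or M\"obius graph whose thickening recovers the underlying marked surface, after which the $\mathbb{G}$-reduction and the boundary trivializations are reconstructed from the structured graph and its augmentations. With these ingredients supplied (as the paper does, proving full faithfulness of the spine functor rather than exhibiting an explicit quasi-inverse), your outline would close up; as it stands, the central well-definedness step, the non-stable cases, and the thickening itself are all left open or misattributed.
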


Before we prove this theorem, we need some concepts and notation.

\begin{defn}
A \emph{simple curve} on a marked surface $(\Sigma,P)$ is a continuous map $\gamma:[0,1]\to\Sigma$ such that 
\begin{itemize}
\item The endpoints of $\gamma$ are in $P$
\item $\gamma$ is non-self-intersecting
\item If the endpoints of $\gamma$ are the same point $q$, then $\gamma\neq0\in \pi_1 \left((\Sigma\setminus P)\cup q,q\right)$
\end{itemize}
An \emph{arc} is an equivalence class of simple curves in $\Sigma$ under isotopy and reversal of parameterization. Two arcs are said to be \emph{compatible} if they can be represented by non-intersecting curves. 
\end{defn}

\begin{defn}
An \emph{ideal triangulation} of a marked surface $(\Sigma,P)$ is a maximal collection of pairwise compatible arcs.

A \emph{spanning graph} for $(\Sigma,P)$ is a graph $\Gamma$ embedded into $\Sigma\setminus P$ such that $\partial\Gamma\subset\partial \Sigma$ and such that the maps
\begin{eqnarray*}
\Gamma &\to & \Sigma\setminus P\\
\partial\Gamma & \to & \partial\Sigma\setminus P
\end{eqnarray*}
are homotopy equivalences. A spanning graph is called 3-valent if all of its vertices have valency 1 or 3.
\end{defn}

For much of what follows, we will need to work in a more specific case for our marked surfaces.

\begin{defn}
Let $(\Sigma,P)$ be a marked $C^\infty$ surface with boundary. Denote $\Sigma^\circ=\Sigma\setminus\partial\Sigma$. The \emph{Schottky double} $\Sigma^{\#}$ of $\Sigma$ is obtained by taking the orientation cover $\widetilde{\Sigma^\circ}$ then compactifying it by gluing in a single copy  of $\partial\Sigma$. This yields a two-sheeted covering $\pi:\Sigma^\# \to \Sigma$ ramified along the boundary. Additionally, We can equip $\Sigma^\#$ with the structure of a marked surface by taking $P^\#=\pi^{-1}(P)$.
\end{defn}

\begin{defn}
A marked surface $(\Sigma,P)$ is called \emph{stable} if
\begin{enumerate}
\item $P\neq\emptyset$ and $P$ meets every boundary component of $\Sigma$.
\item $\eta(\Sigma^\# \setminus P^\#)<0$
\end{enumerate}
\end{defn}

\begin{rem}
To clarify this definition somewhat, we note that in the oriented case,  the second condition amounts to requiring that $(\Sigma,P)$ is not
\begin{itemize}
\item $S^2$ with $\vert P\vert\leq 2$
\item $D^2$ with $\vert P\vert=1$
\item $D^2$ with $\vert P\vert=2$ and $P\subset \partial \Sigma$.
\end{itemize}

In the unoriented case, we additionally prohibit:

\begin{itemize}
\item $\R P^2$ with $\vert P\vert = 1$
\end{itemize}
\end{rem}

We then have the following (see, eg, \cite{DK2} for the oriented case)

\begin{prop}\label{prop:spangraph}
Let $(\Sigma,P)$ be a stable marked surface. Then taking the dual graph creates a bijection between the set of ideal triangulations  of $(\Sigma,P)$ and the set of isotopy classes of 3-valent spanning graphs of $(\Sigma,P)$.
\end{prop}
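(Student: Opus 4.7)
The plan is to construct the dual-graph map explicitly, produce an inverse by the standard ``cut-along-$\Gamma$'' construction, and verify both descend to isotopy classes and are mutually inverse.

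First, given an ideal triangulation $T$ realized by pairwise non-intersecting curves, I define $\Gamma(T)$ by placing one vertex in the interior of each complementary triangle, one transversal edge crossing each interior arc of $T$, and one half-edge terminating on $\partial\Sigma$ for each arc of $T$ lying in the boundary. Since each triangle has exactly three sides, every interior vertex of $\Gamma(T)$ is $3$-valent and every boundary vertex is $1$-valent. A deformation retract of each complementary triangle (minus its corners in $P$) onto the tripod formed by its central vertex and three emanating edge-segments shows that $\Gamma(T)\hookrightarrow \Sigma\setminus P$ is a homotopy equivalence, and the restriction to the boundary is likewise, so $\Gamma(T)$ is a $3$-valent spanning graph. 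Ambient isotopy of curve representatives of $T$ carries the dual construction along, so the assignment descends to isotopy classes.

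Next, I would construct the inverse $\Gamma\mapsto T(\Gamma)$. Given a $3$-valent spanning graph $\Gamma$, the condition that $\Gamma\hookrightarrow\Sigma\setminus P$ be a homotopy equivalence, together with the Euler characteristic count
\begin{equation*}
V - E \;=\; \chi(\Gamma) \;=\; \chi(\Sigma\setminus P) \;=\; \chi(\Sigma) - |P|,
\end{equation*}
forces the complement $\Sigma\setminus \Gamma$ to be a disjoint union of $|P|$ open disks, each containing exactly one marked point. At each $3$-valent vertex $v$, the three adjacent faces correspond to three marked points, which I connect pairwise by simple curves crossing the incident edges of $\Gamma$ exactly once in a small neighborhood of $v$. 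The resulting arcs are pairwise compatible (they meet only at marked points) and form an ideal triangulation; maximality follows because the number of arcs equals $|E(\Gamma)|$, matching the count for any ideal triangulation on a stable surface. Verifying mutual inverseness on isotopy classes is then routine: $T(\Gamma(T)) = T$ because each vertex of $\Gamma(T)$ sits inside a unique triangle whose three sides are recovered by the arc construction, and $\Gamma(T(\Gamma))$ is ambient isotopic to $\Gamma$ because both carry the same local tripod structure at each $3$-valent vertex and agree on the boundary.

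The main obstacle will be the bookkeeping at $\partial\Sigma$: one must coherently match boundary arcs of $T$, $1$-valent vertices of $\Gamma$ on $\partial\Sigma$, and marked points in $P\cap\partial\Sigma$, making sure that the face-counting argument still delivers exactly one marked point per face when boundary components intervene. Stability is used precisely here, to exclude the degenerate cases (sphere with few punctures, disk with $\leq 2$ punctures, and in the unoriented setting $\mathbb{R}P^2$ with one puncture) where the complement fails to decompose into disks with one marked point apiece. The oriented version of the proposition is classical and handled in \cite{DK2}; the general case reduces to it by lifting the spanning graph and the triangulation to the Schottky double $\Sigma^\#$ and descending, which is exactly why the stability condition was formulated on $\Sigma^\#$.
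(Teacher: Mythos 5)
Your route is fine, but note that the paper does not prove this proposition at all: it is recorded as a known fact with a pointer to \cite{DK2} for the oriented case, so your argument supplies a proof the text omits rather than paralleling one. Your sketch is the standard dual-graph/spine argument and is correct in outline; two steps deserve tightening. First, the decomposition of $\Sigma\setminus\Gamma$ into $|P|$ disks with one marked point apiece does not follow from the Euler characteristic count alone: you need the spine property coming from the homotopy equivalence $\Gamma\hookrightarrow\Sigma\setminus P$ (each complementary face retracts onto an interior puncture or a boundary segment), together with the fact that $\partial\Gamma\hookrightarrow\partial\Sigma\setminus P$ being a homotopy equivalence forces points of $\partial\Gamma$ and marked points to alternate along each boundary circle; the Euler count by itself does not exclude, say, a face with two marked points balanced against a non-disk face. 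Second, the inverse construction is cleaner indexed by edges of $\Gamma$ (one arc per edge, crossing it once and otherwise disjoint from $\Gamma$); your per-vertex description produces each dual arc twice, and in either formulation one must still verify that the resulting curves are essential, pairwise non-isotopic and compatible --- this, rather than the face count, is where stability genuinely enters, since in the excluded cases (disk with one or two marked points, sphere with at most two, $\R P^2$ with one) the dual curves degenerate to nullhomotopic or mutually isotopic ones. Finally, the closing reduction of the unoriented case to the oriented case via the Schottky double is only gestured at and would require working equivariantly with respect to the deck involution; since your direct argument never uses an orientation, it is simpler to run it on $\Sigma$ itself and use $\Sigma^{\#}$ only through the stability hypothesis, which is how the paper's formulation intends it.
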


Additionally, we can notice that, as a particular case of the construction of a $\G$-structure on a graph embedded in a $\mathbb{G}$-structured surface $M$, if we have an edge $e$ leading to a point $x$ on the boundary, it's endpoint can be made a $\G$-structured set by taking as our $\G_1$-torsor $\mathcal{O}_x$ the preimage of the germs defined by the edge viewed as elements of $C(T_x M)$ under our $\Homeo^G(S^1)$-bundle. Given a spanning graph $\Gamma$ embedded in the surface, we can define an "augmentation map" by noticing that, WLOG, we can take the point $m(e)$ defining the edge $e$ connected to $x$ in $\Gamma$ to be contained in a trivializing open set containing $x$, thereby canonically identifying the torsors defined for $x$ and $m(e)$. 

This does not quite define an augmentation, since we still have not chosen an identification of $(C(T_x M), J, \mathcal{O}_x)$ with $[1]$. However, the trivialization of the $\mathbb{G}$-bundle $F_\mathbb{G}\to M$ on the boundary gives us a trivialization of the $\Homeo^G(S^1)$ bundle
\begin{equation*}
\xymatrix{
F_\mathbb{G}\times_\mathbb{G}\Homeo^G(S^1)\ar[d]_\pi & & \\
Fr_M \times_{GL(2,\R)}\Homeo(S^1) \ar[rr]^{=} & & \Homeo(S^1,C(TM)
}
\end{equation*} 
on the boundary. The identity element in this trivialization projects down to the unit normal $v\in C(T_x M)$ defined by the image of $1\in\R$ under the embedding of $\scri$ into $M$, and so, taking WLOG $(v,-v)$ 
to be the germs in $C(T_x M)$ defined by the edge attached to $x$, the trivializing element of $F_\mathbb{G}\times_\mathbb{G}\Homeo^G(S^1)$ defines a trivialization of the torsor $\mathcal{O}_x$. 

\begin{lem}
The augmentations and $\G$-structure on $\Gamma$ defined by the process above are invariant under equivalence of cobordisms
\end{lem}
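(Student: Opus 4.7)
The plan is to prove invariance by tracking each ingredient of the construction through a structured diffeomorphism. Two equivalent cobordisms $M_1$ and $M_2$ are by definition related by a structured diffeomorphism $(\phi,\tilde\phi)$ that commutes with the structured embeddings of the incoming and outgoing boundaries. Since $\tilde\phi$ is $\mathbb{G}$-equivariant and covers $d\phi$, it induces an equivariant isomorphism of the associated $\Homeo^G(S^1)$-bundles on the two surfaces, which I will use to transport structure from $\Gamma \subset M_1$ to $\phi(\Gamma) \subset M_2$.

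First, I would verify that the $\Delta\G$-structure from Proposition \ref{prop:structgraphprop} is functorial under structured diffeomorphism. The torsor $\mathcal{O}_x$ at a vertex or boundary endpoint $x$ is defined as the preimage in the $\Homeo^G(S^1)$-bundle of the germs in $C(T_xM)$ cut out by the edges incident to $x$. Because $\tilde\phi$ is a bundle map sending edges of $\Gamma$ to edges of $\phi(\Gamma)$, these preimages are carried isomorphically, and the equivariance map $\rho$ is preserved. Thus $\tilde\phi$ induces an isomorphism of structured sets at each vertex, compatibly with the incidence diagram, giving an isomorphism of $\Delta\G$-structures on $\Gamma$ and $\phi(\Gamma)$.

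Second, I would check that the augmentation data at each external half-edge is also preserved. The augmentation was constructed from two pieces of information: the trivialization of the $\mathbb{G}$-bundle over the boundary induced by the structured embedding of $\scri$, and the identification of the pair of germs $(v,-v) \in C(T_xM)$ with the marked points of $[1]$ via the unit normal $v$ coming from the image of $1 \in \R$. Equivalence of cobordisms precisely requires $(\phi,\tilde\phi)$ to intertwine the structured embeddings of $S_0$ and $S_1$, so both the normal $v$ and the trivialization of $F_\mathbb{G}$ near $x$ are carried to the corresponding data at $\phi(x)$. Consequently the identification of $(C(T_xM), J, \mathcal{O}_x)$ with $\epsilon[1]$ used in $M_1$ matches the one used in $M_2$, and the resulting augmentation maps agree under the isomorphism from the previous paragraph.

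The only genuine subtlety, and therefore the main obstacle, lies in confirming that the auxiliary identification of the torsor $\mathcal{O}_x$ at the boundary point $x$ with the torsor at an interior edge midpoint $m(e)$ (obtained by passing through a trivializing open neighborhood) is independent of the chosen neighborhood and compatible with $\tilde\phi$. This reduces to the observation that any two trivializing neighborhoods differ by a $\mathbb{G}$-valued transition function, which $\tilde\phi$ respects by equivariance; hence the induced identification of torsors is intrinsic to the bundle data and is automatically preserved under structured diffeomorphism. Once this point is dealt with, the lemma follows by combining the three observations above.
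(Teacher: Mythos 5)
Your proposal is correct and takes essentially the same route as the paper: the structured diffeomorphism induces a map of the associated $\Homeo^G(S^1)$-bundles covering $d\phi$, hence isomorphisms of the structured sets at every point compatibly with the incidence diagram (a step the paper delegates to Lemma 4.26 of \cite{DK}), and commutation with the structured embeddings of $\scri$ forces the augmentation elements to agree under the identifications with $[1]$. The only difference is cosmetic: you spell out the torsor-level transport and the well-definedness of the identification with the edge midpoint $m(e)$, which the paper leaves implicit.
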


\begin{proof}
A structured diffeomorphism $(\phi,\tilde{\phi})$ yields maps 
\begin{equation*}
\xymatrix{
F_\mathbb{G}\times_\mathbb{G}\Homeo^G(S^1)\ar[d]_\pi \ar[rr]^{\tilde{\phi}} & &F_\mathbb{G}\times_\mathbb{G}\Homeo^G(S^1)\ar[d] \\
\Homeo(S^1,C(TM)\ar[rr]^{\circ d\phi} & & \Homeo(S^1,C(TM)
}
\end{equation*}
Which give isomorphisms of structured circles for every point $y\in M$. By Lemma 4.26 in \cite{DK}, this gives the desired equivalence of $\G$-structures. However, since the structured diffeomorphism must commute with the inclusions of $\scri$, the induced map of structured circles is the identity on the boundary (augmentation) elements under the identifications with $[1]$ given by the inclusions of $\scri$.
\end{proof}

And, conversely:

\begin{lem}\label{lem:structrecon}
Given $\G$-structured augmented spanning graph $\Gamma$ embedded in a cobordism $M$, there is an induced $\mathbb{G}$-structure $F_\mathbb{G}$ on $M$, and trivializations of $F_\mathbb{G}$ over the embedded boundary intervals defined by $\partial\Gamma$. These structures are unique up to structured diffeomorphism. 
\end{lem}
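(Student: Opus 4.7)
The plan is to reverse the construction behind Proposition \ref{prop:structgraphprop}: using the structured-circle model $\mathcal{C}_G\simeq\cG$, I would read off a local $\mathbb{G}$-bundle at each vertex of $\Gamma$, glue these along edges using the morphisms in the lifted incidence diagram $\tilde{I}_\Gamma$, and extend across the complementary regions by appealing to the spanning property.

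First I would pass to the marked surface picture $(\Sigma,P)$ and choose a regular neighborhood $N(\Gamma)\subset M$ that deformation-retracts onto $\Gamma$, decomposed into disks around vertices and strips around edges. At each vertex $v$, the $\Delta\G$-structured set $\tilde{I}_\Gamma(v)$ corresponds under the equivalence $\pi:\mathcal{C}_G\to\cG$ to a marked structured circle $(C_v,J_v,\rho_v)$; I realize the disk component of $N(\Gamma)$ around $v$ as the cone on $(C_v,J_v)$ and take $F_\mathbb{G}$ there to be the radial pullback of $\rho_v$ along $p_\mathbb{G}$. Along each internal edge $e=\{h,\mathfrak{y}(h)\}$, the morphisms $H(v)\to\{h,\mathfrak{y}(h)\}\leftarrow H(v')$ in $\tilde{I}_\Gamma$ translate to morphisms of structured circles in $\mathcal{C}_G$, hence to $\mathbb{G}$-equivariant identifications of the restricted bundles on matching boundary arcs, which I use as transition data over the tubular strip of $e$. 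At each external half-edge $h$ the augmentation $\phi_h:H(v)\to\epsilon[1]$ is, under $\pi^{-1}$, a morphism of structured circles into the canonical trivial one; this is exactly the data identifying $F_\mathbb{G}$ along the boundary arc containing $h$ with the bundle on $\scri$, which I take as the required structured embedding of $\scri$. Finally, since $\Gamma$ is a spanning graph, each connected component of $M\setminus N(\Gamma)$ is either a disk around an interior puncture of $P^\circ$ or a half-disk along a boundary arc outside $\partial\Gamma$; over each such contractible region any $\mathbb{G}$-bundle is unique up to isomorphism, so the bundle already defined on the boundary circle extends uniquely to the interior.

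For uniqueness, given two $\mathbb{G}$-structures on $M$ inducing the same $\Delta\G$-data on $\Gamma$ and the same boundary trivializations, I would first produce a structured diffeomorphism on a neighborhood of $\Gamma$ by applying the local converse to Lemma 4.26 of \cite{DK} (the same tool used in the preceding lemma), then extend across each complementary contractible region. The hard part will be verifying the cocycle condition built into the gluing: functoriality of $\tilde{I}_\Gamma$ guarantees that the structured morphisms compose correctly around any loop in $\Gamma$, but I must translate this into the statement that the transition maps for $F_\mathbb{G}$ satisfy the required identity around every face of the dual ideal triangulation supplied by Proposition \ref{prop:spangraph}. This should reduce to identifying the monodromy around a face with the image in $\Homeo^G(S^1)$ of the composite of canonical factorizations, which must be trivial because the face is itself a disk; making this identification explicit, and checking that it is unaffected by the choices of trivialization at each vertex and augmentation at each external half-edge, is where the real bookkeeping lives.
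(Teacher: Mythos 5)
There is a genuine gap, and it sits exactly where the paper's proof does its real work. Your proposal treats the augmentation $\phi_h:H(v)\to\epsilon[1]$ as if it were already bundle-level data: you say it ``is exactly the data identifying $F_\mathbb{G}$ along the boundary arc containing $h$ with the bundle on $\scri$.'' But the augmentation is discrete data --- it trivializes the $\G_1$-torsor $\pi_0$ of the relevant fiber, equivalently it singles out a component $d$ of the associated $\Homeo^G(S^1)$-bundle $F_\mathbb{G}\times_\mathbb{G}\Homeo^G(S^1)$ over the boundary point. To obtain an actual trivialization of $F_\mathbb{G}$ over the embedded interval (i.e.\ a structured embedding of $\scri$) one must still choose a point $z$ in the preimage of $d$ under $F_\mathbb{G}\to F_\mathbb{G}\times_\mathbb{G}\Homeo^G(S^1)$, and the uniqueness-up-to-structured-diffeomorphism asserted in the lemma requires showing that different choices of $z$ yield equivalent cobordisms. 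The paper proves exactly this: using that $\mathbb{G}\to\Homeo^G(S^1)$ is a homotopy equivalence, the homotopy from $g\circ f$ to $\mathrm{id}_\mathbb{G}$ produces a path between any two lifts $y,z$, and a collar $I\times I$ of the boundary interval converts that path into a structured diffeomorphism that is the identity in the interior and moves one boundary trivialization to the other. Your proposal never addresses this choice, so the boundary-trivialization part of the statement (and its uniqueness clause) is not actually established.

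On the existence of the $\mathbb{G}$-structure on $M$ itself, you take a genuinely different route: the paper simply cites Lemma 4.26 of \cite{DK} for the equivalence between $\Delta\G$-structures on a spanning graph and $\mathbb{G}$-structures on the surface, whereas you attempt to rebuild the bundle by hand from vertex disks, edge strips, and the complementary contractible regions. That is a reasonable and more self-contained strategy in principle, but as you yourself note, the cocycle/monodromy verification around faces of the dual triangulation --- the ``real bookkeeping'' --- is left undone, and that verification is essentially the content of the cited lemma. So as written the proposal defers the existence argument to an unproved check and omits the lift-choice argument that is the paper's actual new contribution in this lemma; you should either carry out the face-monodromy computation or, as the paper does, quote \cite{DK} for the surface structure and concentrate on the boundary trivializations.
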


\begin{proof}
The first part of the lemma is again covered by Lemma 4.26 in \cite{DK}. To find trivializations, first note that, for a point $x\in\partial\Gamma$ with germs $(v,-v)\in C(T_x M)$ the augmentation defines a trivialization of the $\G_1$ torsor $\pi^{-1}(v,-v)$, and the identity element in this trivialization gives an element $d$ which defines a trivialization of the $\Homeo^G(S^1)$-bundle $F_\mathbb{G}\times_\mathbb{G}\Homeo^G(S^1)$.

The map $f:\mathbb{G}\to\Homeo^G(S^1)$ has a homotopy inverse $g$. We can define a trivialization of $F_\mathbb{G}$ by any element $z\in\pi^{-1}(d)$ where $\pi:F_\mathbb{G}\to F_\mathbb{G}\times_\mathbb{G}\Homeo^G(S^1)$. To see that this yields a well-defined cobordism, notice the homotopy $H_t$ from $g\circ f$ to $id_\mathbb{G}$ gives rise to a path from $y$ to $z$ for any $y,z\in\pi^{-1}(d)$. Choosing a collar neighborhood of the embedded interval in question, and parameterizing it by $I\times I$, this path gives rise to a structured diffeomorphism which moves from the trivialization defined by $y$ to that defined by $z$ on the boundary, and reduces to the identity on the interior. 
\end{proof}

It is clear from the definitions that the processes in the two lemmas are inverse to one another, so we can now return to Theorem \ref{thm:cobequiv}.

\begin{proof}[Proof of Theorem \ref{thm:cobequiv}]
Fix a crossed simplicial group $\Delta\G$ corresponding to the planar Lie group $G$. We want to construct a functor 
\begin{equation*}
T: \Cob\to\Cor
\end{equation*}
the value of which on objects is obvious. 

If we restrict ourselves to stable marked surfaces $(\Sigma,P)$, we can give this functor's value by taking an ideal triangulation of $(\Sigma,P)$, and taking the dual graph $\Gamma$ of this triangulation. By \ref{prop:structgraphprop}, $\Gamma$ is endowed with a canonical $\Delta\G$-structure. It is a classical fact that any two such triangulations of $(\Sigma,P)$ will be be related by the 2-2 Pachner move, as pictured in figure \ref{fig:pachmove}. This means that any two structured graphs obtained through this method will be related by edge contractions, so that the equivalence class of $\Delta\G$-structured graphs obtained through this procedure is well-defined. 
\begin{figure}[h]
\begin{tikzpicture}
\draw [thick] (0,0) -- (0,2);
\draw [thick] (0,0) -- (2,0);
\draw [thick] (0,2) -- (2,2);
\draw [thick] (2,0) -- (2,2);
\draw [thick] (0,0) -- (2,2);
\draw [thin,<->] (2.5,1) -- (3.5,1);
\draw [thick] (4,0) -- (4,2);
\draw [thick] (4,0) -- (6,0);
\draw [thick] (4,2) -- (6,2);
\draw [thick] (6,0) -- (6,2);
\draw [thick] (6,0) -- (4,2);
\draw [thin, red] (1.5,-0.5)--(1.5,0.5)--(2.5,0.5);
\draw [thin, red] (0.5,2.5)--(0.5,1.5)--(-0.5,1.5);
\draw [thin, red] (1.5,0.5)--(0.5,1.5);
\draw [thin, red] (3.5,0.5)--(4.5,0.5)--(4.5,-0.5);
\draw [thin, red] (5.5,2.5)--(5.5,1.5)--(6.5,1.5);
\draw [thin, red] (4.5,0.5)--(5.5,1.5);
\path [fill=red] (0.5,1.5) circle [radius=0.05];
\path [fill=red] (1.5,0.5) circle [radius=0.05];
\path [fill=red] (4.5,0.5) circle [radius=0.05];
\path [fill=red] (5.5,1.5) circle [radius=0.05];
\end{tikzpicture}
\caption{The 2-2 Pachner Move, with the dual graph of the triangulation marked in red.} \label{fig:pachmove}
\end{figure}
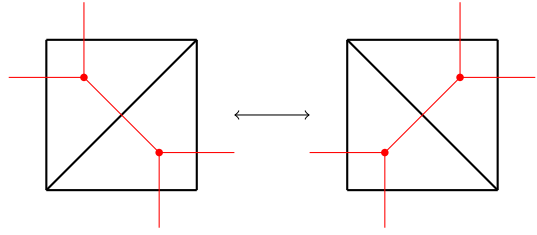

From the theory of ribbon/m\"obius graphs (see, eg \cite{B}) we know that the underlying $C^\infty$ marked surface $(\Sigma,P)$ can be reconstructed from an equivalence class of ribbon/m\"obius graphs. Every planar crossed simplicial group admits a canonical functor $\Delta\G\to\Xi$ or $\Delta\G\to\Lambda$ which in some sense forgets the extra structure. Hence, by passing through this functor, and then performing the `thickening' procedure, we can retrieve the $C^\infty$ marked surface from its image under the functor $T$ as defined above. 

Now, we need to confirm that we can retrieve the $\mathbb{G}$-structure on $\Sigma$ from our equivalence class of graphs. However, lemma 4.26 in \cite{DK} tells us precisely this. Hence, the functor we have defined on the restriction of $\Cob$ to stable surfaces is faithful (presuming, of course, that the assignment defined above is a functor, which we will prove shortly). Moreover, we can immediately see that it is full by looking at the analogous cases where we extract m\"obius or ribbon graphs from the surface. Actually, there is a subtlety here. We have not yet showed that the trivializations over embedded boundary components can be reconstructed, so the proof of fully faithfulness is incomplete. However, this amounts to precisely the statement of lemma \ref{lem:structrecon}.

Before proving functorality, we want to define $T$ in the remaining (non-stable) cases. We define $\Delta\G$-structured graphs by embedding graphs in the surfaces in question, and then taking the induced $\Delta\G$ structure:
\begin{itemize}
\item For $D^2$ with a single marked point, the image of $T$ will be the equivalence class defined by the graph with a single vertex embedded in the disk, and a single half-edge leading to the boundary of the disk.
\item For $D^2$ with two marked points on the boundary, we again define the image by an embedded graph. The marked points break the boundary into two sections. The image under $T$ is defined to be the graph with a single vertex embedded in the center of the disk, and a half-edge leading to each section of the boundary.
\item For $S^2$ with two marked points, the image of $T$ is the graph with a single vertex and a loop, embedded so that the loop generates the fundamental group of $S^2\setminus P$.
\item For $S^2$ with one marked point, the image of $T$ is the graph with two embedded vertices, and an edge between them.
\item For $\R P^2$ (where applicable) with one marked point, the image of $T$ is the graph with a single vertex and a loop embedded so that the loop generates the fundamental group of $\R P^2 \setminus P$.  
\end{itemize}

It remains, then, to show functorality and finish proving fully faithfulness. To see functorality in the case of stable structured cobordisms, we notice that composing two such cobordisms concatenates their spanning graphs, and yields a spanning graph of the composed cobordism. Since such spanning graphs are in bijection with triangulations of the cobordism, we see that the concatenated graph can be realized as the image under $T$ of the composed cobordism. Since the $\G$-structure is defined locally, the $\G$ structure of the inherited from the composed cobordism agrees with the $\G$-structure inherited from the concatenation of graphs. To see that this agrees with our notion of augmentations, we need only notice that the structured graph
\begin{equation*}
\xymatrix{
[n]\ar[r]^\phi & [1] & [m]\ar[l]_\psi
}
\end{equation*}
is a contraction of 
\begin{equation*}
\xymatrix{
[n]\ar[r]^\phi & [1] & [1]\ar[l]_{id} \ar[r]^{id} & [1] & [m]\ar[l]_\psi
}
\end{equation*}
Functorality in the non-stable case can be checked case-by-case. 

To see fully faithfulness in the non-stable case, we first notice that there is only one $\mathbb{G}$ structure on $D^2$ up to structured diffeomorphism. This means that, in the case of $D^2$ with a single marked point, we still get a single equivalence class of cobordisms. Since there is only a single equivalence class of structured augmented graphs with a single vertex and a single half-edge, fully faithfulness comes free. 

In the case of $D^2$ with two boundary marked points, we can identify the center vertex with one of the vertices on the edge. The remaining data amounts to an automorphism of $[1]$. Conversely, given any such automorphism $g$, taking the trivial $\mathbb{G}$-bundle over $D^2$ (with the same trivialization as over the first boundary interval), we can use the process outlined in  lemma \ref{lem:structrecon} to find a trivialization on the second boundary interval such that the $\G$-structure extracted from the resulting cobordism is precisely the structured graph corresponding to $g$. 

The remaining non-stable cases can be checked by cutting the surfaces into contractable pieces, and applying the two $D^2$ cases. 
\end{proof}

\begin{cor}
There is an equivalence of categories $\GOTFT_k\cong\CSFT_k$.
\end{cor}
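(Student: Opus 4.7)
The plan is to deduce the corollary almost formally from Theorem \ref{thm:cobequiv}, by invoking the $2$-functoriality of $\operatorname{Fun}^\otimes(-,\Vect_k)$. Since both $\GOTFT_k$ and $\CSFT_k$ are defined as categories of symmetric monoidal functors out of $\Cob$ and $\Cor$ respectively (with natural transformations as morphisms), any symmetric monoidal equivalence $T:\Cob\to\Cor$ induces an equivalence of the corresponding functor categories by the rule $Z\mapsto Z\circ T^{-1}$, with quasi-inverse $W\mapsto W\circ T$. Natural transformations are transported by whiskering, and the triangle identities follow from those of $T$ and $T^{-1}$.

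First I would upgrade Theorem \ref{thm:cobequiv} to a statement about symmetric monoidal categories. The monoidal structure on $\Cob$ is given by disjoint union, while that on $\Cor$ is given by disjoint union of structured augmented graphs; in both cases the unit is the empty object. On objects, $T$ sends $\bigsqcup_n \scri$ to $\scri^{\otimes n}$, which is manifestly compatible with disjoint union. On morphisms, $T$ is defined via embedded spanning graphs, and the disjoint union of two cobordisms admits a spanning graph given by the disjoint union of the two chosen spanning graphs; the induced $\Delta\G$-structure on the disjoint union graph is precisely the disjoint union of the $\Delta\G$-structures (this is a local statement, already implicit in Proposition \ref{prop:structgraphprop}). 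Thus $T$ carries natural coherence isomorphisms $T(M)\otimes T(N)\cong T(M\sqcup N)$, and one verifies the associator/unitor/braiding diagrams commute using the fact that all relevant structure is detected by the underlying combinatorics of the graph.

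Having established that $T$ and its quasi-inverse $T^{-1}$ (constructed via the thickening/reconstruction argument outlined in Lemma \ref{lem:structrecon}) are symmetric monoidal equivalences, the equivalence at the level of functor categories is then formal. Explicitly, define
\begin{equation*}
T^*:\CSFT_k\to\GOTFT_k,\quad Z\mapsto Z\circ T,
\end{equation*}
and similarly $(T^{-1})^*:\GOTFT_k\to\CSFT_k$. The natural isomorphism $T\circ T^{-1}\cong\operatorname{id}_{\Cor}$ (and its partner) gives, by whiskering, natural isomorphisms $T^*\circ(T^{-1})^*\cong\operatorname{id}_{\GOTFT_k}$ and $(T^{-1})^*\circ T^*\cong\operatorname{id}_{\CSFT_k}$. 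Since whiskering of monoidal natural isomorphisms by monoidal functors is again monoidal, these land in the correct functor categories.

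The main obstacle is the verification that the equivalence of Theorem \ref{thm:cobequiv} is genuinely symmetric monoidal rather than merely an equivalence of underlying categories; once that is in hand, the rest is a standard $2$-categorical argument. In practice this amounts to checking that the augmentation data and the $\Delta\G$-structures constructed in the proof of Theorem \ref{thm:cobequiv} are compatible with disjoint unions of graphs and of cobordisms, which is essentially automatic since both are defined by local data on each connected component.
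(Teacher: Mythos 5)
Your proposal is correct and is essentially the argument the paper intends: the corollary is meant to follow formally from Theorem \ref{thm:cobequiv} by precomposition with the equivalence, exactly as you describe. Your additional care in checking that the equivalence is symmetric monoidal (compatibility of $T$ with disjoint unions, which is local on components) fills in the one point the paper leaves implicit, and is the right thing to verify.
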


\section{The Structure of CSFTs}

To classify CSFTs (and thus GOTFTs) we must first understand the structure with which a CSFT endows its target. For the remainder of this section, we will assume that we have a fixed CSFT corresponding to a balanced crossed simplicial group $\Delta\G$,
\begin{equation*}
Z:\Cor\to\Vect_k
\end{equation*}
and codify a set of properties this places on the target object. In the next section, we will show that this set of properties is sufficient to define a CSFT. 

The first point to notice, indicated by the use of the word `target' above, is that $Z$ defines an object 
\begin{equation*}
A:=Z(\scri)
\end{equation*}
The restriction of our functor $Z$ to the subcategory with morphisms in $P_{\Delta\G}$ yields a map of operads
\begin{equation*}
P_{\Delta\G}\to\text{End}(A)
\end{equation*}
where $\text{End}(A)$ is the endomorphism operad of $A$. This endows $A$ with the structure of an algebra over $P_{\Delta\G}$ as described in Proposition \ref{prop:assprop}. 

The additional structure we will explore will amount to understanding the restriction of $Z$ to a subcategory. To define this subcategory, however, we first need some additional notation.

\begin{defn}
A \textit{corolla} is a $\Delta\mathfrak{G}$-structured graph with only one vertex and no loops. A \textit{rose} is a $\Delta\mathfrak{G}$-structured graph with only one vertex. 
\end{defn}

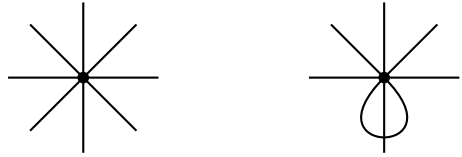
\begin{figure}[h]
\begin{tikzpicture}
\draw [fill=black] (1,1) circle [radius=0.07];
\draw [fill=black] (5,1) circle [radius=0.07];
\draw [thick] (1,1)--($ (1,1)+(0:1cm)$);
\draw [thick] (1,1)--($(1,1)+(45:1cm)$);
\draw [thick] (1,1) -- ($(1,1)+(90:1cm)$);
\draw [thick] (1,1) -- ($(1,1)+(135:1cm)$);
\draw [thick] (1,1) -- ($(1,1)+(180:1cm)$);
\draw [thick] (1,1) -- ($ (1,1)+(225:1cm)$);
\draw [thick] (1,1) -- ($ (1,1)+(270:1cm)$);
\draw [thick] (1,1) -- ($(1,1)+(315:1cm)$);
\draw [thick] (5,1)--($ (5,1)+(0:1cm)$);
\draw [thick] (5,1)--($(5,1)+(45:1cm)$);
\draw [thick] (5,1) -- ($(5,1)+(90:1cm)$);
\draw [thick] (5,1) -- ($(5,1)+(135:1cm)$);
\draw [thick] (5,1) -- ($(5,1)+(180:1cm)$);
\draw [thick] (5,1) -- ($ (5,1)+(270:1cm)$);
\draw [thick] (5,1) to [out=225,in=315,loop, distance=1.5cm] (5,1);
\end{tikzpicture}
\caption{A corolla (left) with 8 half edges and a rose with 8 half-edges and one loop.}
\end{figure}

We also introduce some terminology for discussing specific types of corollas (see figure \ref{fig:corfig}). For the sake of clarity, we will use gothic letters for morphisms in $\Cor$ and greek letters for morphisms in $\Vect_k$. 

\begin{figure}[htb]
\begin{tabular}{| m{2.5cm} | m{4cm} | m{2.5cm} |}
\hline
Name & 
Definition &
Picture \\ \hline
$n$-Trace &
A corolla with $n$ half-edges all labeled "in" under the augmentation. &
\begin{tikzpicture}
\path(0,0) node(c){} 
($ (0,0)+(45:1cm) $) node(f){1} 
($ (0,0)+(90:1cm) $) node(g){1}
($ (0,0)+(135:1cm) $) node(h){1};
\draw [thick] (c)--(f) (c)--(g) (c)--(h);
\draw [fill=black] (c) circle [radius=0.07];
\end{tikzpicture}\\
\hline
$n$-Cotrace &
A corolla with $n$ half-edges all labeled "out" under the augmentation. &

\begin{tikzpicture}
\path(0,0) node(c){} 
($ (0,0)+(-45:1cm) $) node(f){0} 
($ (0,0)+(-90:1cm) $) node(g){0}
($ (0,0)+(-135:1cm) $) node(h){0};
\draw [thick] (c)--(f) (c)--(g) (c)--(h);
\draw [fill=black] (c) circle [radius=0.07];
\end{tikzpicture}\\ \hline
Muliplication &
Elements of the operad $P_{\Delta\G}$ &
\begin{tikzpicture}
\path(0,0) node(c){} 
($ (0,0)+(45:1cm) $) node(f){1} 
($ (0,0)+(-90:1cm) $) node(g){0}
($ (0,0)+(135:1cm) $) node(h){1};
\draw [thick] (c)--(f) (c)--(g) (c)--(h);
\draw [fill=black] (c) circle [radius=0.07];
\end{tikzpicture}\\ \hline
Comultiplication &
Morphisms $\scri\to\scri^{\otimes k}$ in $\Cor$ represented by corollas &
\begin{tikzpicture}
\path(0,0) node(c){} 
($ (0,0)+(-45:1cm) $) node(f){0} 
($ (0,0)+(90:1cm) $) node(g){1}
($ (0,0)+(-135:1cm) $) node(h){0};
\draw [thick] (c)--(f) (c)--(g) (c)--(h);
\draw [fill=black] (c) circle [radius=0.07];
\end{tikzpicture}\\ \hline
\end{tabular}
\caption{Basic morphisms in $\Cor$}
\label{fig:corfig}
\end{figure}

\begin{prop}\label{prop:assprop}
For any balanced crossed simplicial group $\Delta\G$, there is a homomorphism 
\begin{equation*}
\chi_2:\G_0\to \Sigma_n\wr \G_0 
\end{equation*}
such that an algebra $A$ over the operad $P_{\Delta\G}$ is precisely a monoid equipped with an action of $\G_0$ such that the multiplication 
\begin{equation*}
m: A\otimes A\to A
\end{equation*}
is equivariant under $\chi_2$. In particular, $P_\Delta\G$ contains a copy of the associative operad $\mathcal{ASS}$.
\end{prop}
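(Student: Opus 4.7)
The plan is to isolate a canonical element $\mathfrak{m} \in P_{\Delta\G}(2)$ playing the role of binary multiplication, to verify the monoid axioms by explicit contractions of structured trees, and then to read off the homomorphism $\chi_2$ from the way the unary operad operations (identified with $\G_0$ via the balanced-case lemma $P_{\Delta\G}(1)\cong \G_0$) commute past $\mathfrak{m}$. Throughout, I work inside $\mathcal{T}_{\Delta\G}(n)$ and use the canonical edge-contraction from Corollary following the pullback lemma to move between representatives.

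First I would identify $\mathfrak{m}$. A representative of any element of $P_{\Delta\G}(2)$ is a corolla with $H(v)=[2]$, two incoming external half-edges and one outgoing external half-edge. Using Lemma \ref{lem:augpullback}, I normalize the $\G$-structure at $v$ relative to the outgoing augmentation so as to pick out a preferred representative $\mathfrak{m}$; its image under a CSFT $Z$ is the candidate multiplication $m=Z(\mathfrak{m}):A\otimes A\to A$. The $\G_0$-action on $A$ is just $Z$ restricted to $P_{\Delta\G}(1)$.

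Second, I would prove that $(A,m)$ is a monoid. The unit comes from the unique $0$-trace corolla, which exists by canonical factorization. For associativity, both composites $m\circ(m\otimes \mathrm{id})$ and $m\circ(\mathrm{id}\otimes m)$ are represented by two-vertex structured trees glued along an inner edge; contracting that edge in either composite, by the edge-contraction construction, produces a corolla in $P_{\Delta\G}(3)$ with three incoming external half-edges. Confluence of the two contractions to a common normal form is guaranteed by the uniqueness of the pullback produced in the lemma preceding Corollary 2.15, so both composites represent the same class, giving associativity.

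Third, for the equivariance: given $g\in\G_0\cong P_{\Delta\G}(1)$, the composites $m\circ(g\otimes \mathrm{id})$ and $m\circ(\mathrm{id}\otimes g)$ correspond to trees in which a $g$-labelled unary interval is spliced onto one input of $\mathfrak{m}$. Contracting the inner edge and applying canonical factorization inside $\Delta\G$ produces an equivalent corolla whose $\G$-data decomposes, via Lemma \ref{lem:augpullback} together with the balanced equality $1_1^\ast=0_1^\ast$, into (i) a permutation $\sigma\in \Sigma_2$ of the two inputs, (ii) a pair $(g_1,g_2)\in \G_0\times \G_0$ acting on the inputs, and (iii) a residual element of $\G_0$ acting on the output. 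The assignment $g\mapsto (\sigma;(g_1,g_2))$ is the map $\chi_2:\G_0\to \Sigma_2 \wr \G_0$; multiplicativity of $\chi_2$ follows from iterating the rewrite and using the cocycle identity built into canonical factorization.

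The main obstacle is in the last step: verifying that $\chi_2$ is a genuine group homomorphism, independent of the normalization choices made for $\mathfrak{m}$ and the particular order of edge-contraction. This is precisely where the balanced hypothesis bites; the equality $1_1^\ast=0_1^\ast$ on $\Stab(0)=\Stab(1)$ ensures that the residual $\G_0$-action appearing on the output is the same whether computed by pulling back along the ``left'' or ``right'' augmentation $\phi\colon[2]\to[1]$, and Lemma \ref{lem:augpullback} makes the two normalizations match. Once the homomorphism is in hand, the converse---reconstructing an algebra over $P_{\Delta\G}$ from such a $\chi_2$-equivariant monoid---is formal, since any element of $P_{\Delta\G}(n)$ contracts to a corolla whose structure is determined up to the $\G_0$-decorations on the external half-edges. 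The inclusion $\mathcal{ASS}\hookrightarrow P_{\Delta\G}$ then follows at once: the suboperad generated by $\mathfrak{m}$ carrying trivial $\G_0$-decoration, modulo the associativity contraction established above, is exactly the (non-symmetric) associative operad.
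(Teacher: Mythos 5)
Your overall shape (choose a binary multiplication, verify monoid axioms, define $\chi_2$ by moving unary operations past $\mathfrak{m}$) matches the paper, but two steps have genuine gaps. First, your canonical $\mathfrak{m}$ does not exist as described: normalizing the outgoing augmentation to the standard $\phi\in\Delta$ still leaves a full $\Sigma_n\wr\G_0$-torsor of incoming augmentations, so Lemma \ref{lem:augpullback} alone does not pick out a preferred representative, and consequently your associativity argument has nothing to bite on. Moreover, ``confluence of the two contractions'' cannot give associativity, because $m\circ(m\otimes\mathrm{id})$ and $m\circ(\mathrm{id}\otimes m)$ are two \emph{different} two-vertex trees, not two contractions of one tree; their contractions could a priori differ by an element of the torsor action on $P_{\Delta\G}(3)$. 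The paper resolves both points at once: it trivializes the torsor by applying the duality $D_\G$ (this is where balancedness enters) to the diagram of maps $\{i-1,i\},\{0,n\}:[1]\to[n]$ in $\Delta$, and then checks by an explicit pushout computation in $\Delta$ that these chosen representatives are closed under concatenation and contraction. That computation is precisely what produces the copy of $\Ass$ and the associativity you need.

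Second, your construction of $\chi_2$ acts on the wrong side. The equivariance in the statement relates the $\G_0$-action on the \emph{output} of the multiplication to actions on the inputs, so one must graft $h\in\G_0\cong P_{\Delta\G}(1)$ onto the \emph{outgoing} half-edge of $\mathfrak{m}$, use canonical factorization to write the new outgoing augmentation as $\phi\circ g$ with $g\in\G_n$, apply $g^{-1}$ to the central vertex to restore the standard form, and then use the isomorphisms $\G_0\cong\Stab(k)$ (Lemma \ref{lem:augpullback}) to rewrite the resulting precomposition of the incoming augmentations as a permutation $\lambda_n(g^{-1})$ together with postcompositions by elements of $\G_0$; this element of $\Sigma_n\wr\G_0$ is $\chi_n(h)$. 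Grafting $g$ onto an input, as you do, is already literally an element of the input torsor action, so your ``decomposition'' is trivial and carries no content; and the ``residual element of $\G_0$ acting on the output'' that appears in your step (iii) cannot simply be dropped from $\chi_2$ --- it must be eliminated by the renormalization of the central vertex described above, which is exactly the step your argument is missing.
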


\begin{proof}
For any equivalence class in $P_{\Delta\G}(n)$, we can choose a unique standard representative such that the outgoing augmentation map is the unique augmentation morphism $\phi\in\Delta$ sending $0$ to $0$. Since the set of possible incoming augmentation maps forms a torsor under the operadic action of $\G_0$, we get that $P_{\Delta\G}(n)$ forms a torsor under
\begin{equation*}
H:=\underset{n \text{ times}}{\underbrace{\G_0\times\cdots\times\G_0}}\wr \Sigma_n
\end{equation*}  
Trivializing this torsor will give us a copy of $\Ass$ in $P_{\Delta\G}$. 

Following \cite{DK}, we consider the diagram in $\Delta$
\begin{equation*}
\xymatrix{
[1]\ar[drr]_{\{0,1\}} & [1]\ar[dr]^{\{1,2\}}  & & \cdots & [1]\ar[dll]^{\{n-1,n\}}\\
 & & [n] & & \\
 & & [1]\ar[u]_{\{0,n\}} & &
}
\end{equation*}
Applying the duality $D_\G$, we then obtain an augmented structured corolla, which we take as our trivialization of the torsor. To see that this choice respects composition, we simply compute that the pushout of a diagram in $\Delta$ of the form:
\begin{equation*}
\xymatrix{
[1]\ar[r]^{\{0,n\}}\ar[d]_{\{i-1,i\}} & [n]\\
[m] & 
}
\end{equation*} 
is given by the map $\gamma:[n]\to [n+m-1]$ with $\gamma(j)=i+j$ and the map  $\epsilon:[m]\to[n+m-1]$ given by 
\begin{equation*}
\epsilon(j)=\begin{cases}
j & j\leq i \\
j+n & j> i
\end{cases}
\end{equation*}
So that composing two such diagrams and taking a pushout gives us a diagram of the same form. Therefore, the trivializations given by the duality are closed under concatenation and contraction.

If we act on the outgoing edge of a multiplication $\mathfrak{m}$ by an element $h\in\G_0$, we can pull the group element back along the augmentation map by canonical factorization, so that the outgoing augmentation map of our new corolla is represented by $\phi\circ g$ for some $g\in \G_n$. Since two representatives of such multiplications are equivalent if and only if related by an automorphism of the central vertex, we can apply $g^{-1}$ to the central vertex, which gives us the standard representative for $\mathfrak{m}\circ h$. This changes the incoming augmentation maps by precomposing with $g^{-1}$, however, the original augmentation maps induce isomorphisms $\G_0\cong\Stab(k)$ for all $k\neq 0$ so that we can represent the new augmentation maps permuting the old ones according to $\lambda_n(g^{-1})$ and  postcomposing with elements of $\G_0$. That is, we can find an element of $H$ whose action on the incoming half-edges of $\mathfrak{m}$ gives the equivalence class of $\mathfrak{m}\circ h$. It is clear that this is compatible with the wreath product structure, and so for each $n$ we get a morphism 
\begin{equation*}
\chi_n:\G_0\to \Sigma_n\wr \G_0
\end{equation*}
under which the $n$-fold multiplication must be equivariant. However, it is easy to see that, under the composibility conditions for the copy of $\Ass$ in $P_{\Delta\G}$, it is sufficient to require that the multiplication $m_2:A\otimes A\to A$ be equivariant under $\chi_2$. 
\end{proof}

\begin{rem}
Working with an explicit copy of $\Ass$ in $P_{\Delta\G}$, one can compute more precisely what the condition specified by $\chi_2$ is. See the examples at the end of the section for more details.
\end{rem}

We can now introduce our subcategory of `generators' for $\Cor$. 

\begin{defn}
The category $\Gen$ is the subcategory of $\Cor$ generated under composition and disjoint union by the morphisms in $P_{\Delta\G}$, the unique 1-trace $\mathfrak{b}_1$ and the unique 1-cotrace $\mathfrak{p}_1$. It is clear that $\Gen$ will consist of all morphisms given by multiplications or traces.
\end{defn}

\begin{rem}
We made reference in the definition to \emph{the} 1-trace $\mathfrak{b}_1$. There is, in fact, only one such object. Since any incoming augmentation map $[0]\to[1]$ can be obtained from any other by applying an automorphism of $[0]$, all such morphisms represent the same equivalence class. In particular, this means that $\mathfrak{b}_1$ is invariant under the operadic action of $\G_0$. Similarly for the 1-cotrace.
\end{rem}

We now characterize functors $\Gen\to\Vect_k$. Taking the restriction $X:=Z\vert_\Gen$, it is obvious that $X$ will be completely determined by its values on $P_{\Delta\G}$ and $\mathfrak{b}_1$. Taking the copy of $\Ass$ in $P_{\Delta\G}$ with multiplications $id_A=\mathfrak{m_1},\mathfrak{m_2},\ldots$, we get traces 
\begin{equation*}
\mathfrak{b}_i=\mathfrak{b}_1\circ\mathfrak{m}_i
\end{equation*}

It is easy to see that, for any 2-trace, there is a uniquely defined 2-cotrace $\mathfrak{p}_2$ such that $\mathfrak{b}_2$ and $\mathfrak{p}_2$ compose to the identity. Graphically, figure \ref{fig:trfig} displays the non-degeneracy of $\mathfrak{b}_2$.
\begin{figure}[h]
\includegraphics[scale=1]{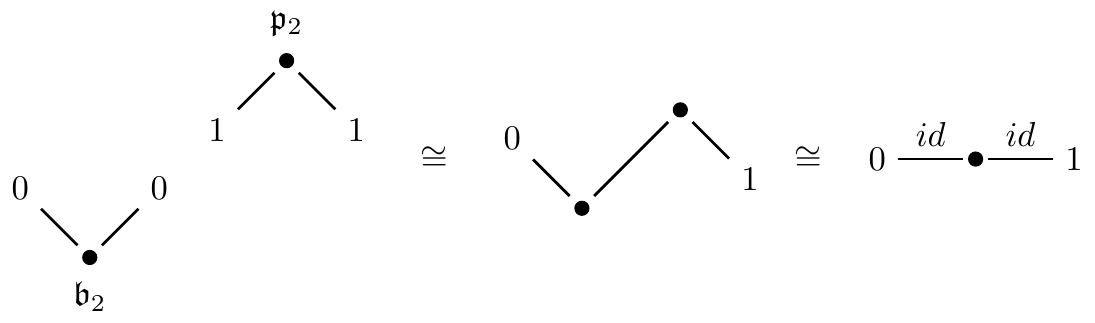}
\caption{Composing a 2-trace $\mathfrak{b}_2$ with the unique 2-cotrace displaying non-degeneracy of $Z(\mathfrak{b}_2)$.}
\label{fig:trfig} 
\end{figure}

\begin{lem}\label{lem:invhomo}
For any $n$-trace representative $\mathfrak{y}$ There is a homomorphism $\eta^\mathfrak{y}_n:\mathfrak{G}_n\to \mathfrak{G}_0\wr \Sigma_{n+1}$ such that the representative isomorphic to $\mathfrak{y}$ via $g$ is given by the operadic action of $\eta^\mathfrak{y}_n(g)$ on the incoming half-edges of $\mathfrak{y}$ 
\end{lem}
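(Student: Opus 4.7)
The plan is to follow the template of the proof of Proposition \ref{prop:assprop} in reverse: using canonical factorization together with Lemma \ref{lem:augpullback}, the action of $g \in \G_n$ on the central vertex can be rewritten as a permutation of the incoming half-edges followed by a tuple of $\G_0$-twists on those half-edges.

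Fix the representative $\mathfrak{y}$; after trivializing the $\G_n$-torsor at its central vertex, we may take the $i$-th incoming augmentation to be the unique morphism $\psi_i \in \Hom_\Delta([n],[1])$ sending $i$ to $1$ and every other element of $[n]$ to $0$. Changing the frame by $g \in \G_n$ replaces each $\psi_i$ by $\psi_i \circ g$. Passing through $\lambda$, the underlying set map of $\psi_i \circ g$ coincides with $\psi_{\lambda_n(g)^{-1}(i)}$, and since each $\psi_j$ is the unique element of $\Hom_\Delta([n],[1])$ with that set map, canonical factorization produces a unique $h_i \in \G_n$ with
\begin{equation*}
\psi_i \circ g \;=\; \psi_{\lambda_n(g)^{-1}(i)} \circ h_i.
\end{equation*}
Comparing set maps on both sides forces $\lambda_n(h_i)$ to fix $\lambda_n(g)^{-1}(i)$, whence $h_i \in \Stab(\lambda_n(g)^{-1}(i))$.

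Next I apply Lemma \ref{lem:augpullback} to write $h_i = \psi_{\lambda_n(g)^{-1}(i)}^{\ast}(\beta_i)$ for a unique $\beta_i \in \G_1^0 \cong \G_0$, and then the defining commutative square of canonical factorization rearranges the composition as $\psi_{\lambda_n(g)^{-1}(i)} \circ h_i = \beta_i \circ \psi_{\lambda_n(g)^{-1}(i)}$, so that
\begin{equation*}
\psi_i \circ g \;=\; \beta_i \circ \psi_{\lambda_n(g)^{-1}(i)}.
\end{equation*}
Comparing with the calculation in Proposition \ref{prop:monoid}, pre-composition of the $j$-th augmentation by $\beta \in \G_1^0$ is exactly the effect of operadically grafting the corresponding element of $P_{\Delta\G}(1) \cong \G_0$ onto the $j$-th input. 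Setting $\sigma_g := \lambda_n(g) \in \Sigma_{n+1}$, I define $\eta_n^{\mathfrak{y}}(g) := (\sigma_g;\, \beta_0, \ldots, \beta_n) \in \G_0 \wr \Sigma_{n+1}$ (with conventions fixed so that the action first twists the $i$-th input by $\beta_i$ and then permutes inputs by $\sigma_g$). By the displayed equation the operadic action of $\eta_n^{\mathfrak{y}}(g)$ on $\mathfrak{y}$ reproduces exactly the representative obtained by acting with $g$ on the central vertex.

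To verify that $\eta_n^{\mathfrak{y}}$ is a homomorphism, I argue as follows. Both $\G_n$ (by acting on the vertex trivialization) and $\G_0 \wr \Sigma_{n+1}$ (by the operadic action on half-edges) act on the set of representatives equivalent to $\mathfrak{y}$; the former action is free because the vertex is labeled by a torsor, the latter is faithful, and $\eta_n^{\mathfrak{y}}$ is defined so that the two actions agree pointwise. Consequently $\eta_n^{\mathfrak{y}}$ must respect group composition. Alternatively, one may verify directly using the canonical factorization of $\psi_i \circ gg'$ that the wreath product multiplication $\eta_n^{\mathfrak{y}}(g)\eta_n^{\mathfrak{y}}(g')$ equals $\eta_n^{\mathfrak{y}}(gg')$. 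The main obstacle is essentially bookkeeping: keeping the conventions of canonical factorization and of the semidirect product structure of $\G_0 \wr \Sigma_{n+1}$ consistent, so that the permutation factor and the tuple of $\G_0$-elements compose with the correct twist by $\sigma_g$.
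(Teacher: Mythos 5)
Your overall strategy is the paper's: the permutation component of $\eta_n^{\mathfrak{y}}(g)$ comes from $\lambda_n(g)$, and the $\G_0$-components come from identifying, for each incoming half-edge, the discrepancy between the twisted augmentation and the original augmentation at the permuted index with a unique element of $\G_1^0\cong\G_0$, the key input being balancedness via Lemma \ref{lem:augpullback}. However, your normalization step is faulty. Morphisms in $\Hom_\Delta([n],[1])$ are monotone, so the only one sending exactly one element to $1$ is the map collapsing everything but $n$; your $\psi_i$ for $i<n$ simply do not exist in $\Delta$, and in any case the augmentations of an arbitrary $n$-trace $\mathfrak{y}$ are given data which a trivialization of the vertex torsor does not let you choose (a change of frame precomposes all of them by a single element of $\G_n$). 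Consequently the justification ``since each $\psi_j$ is the unique element of $\Hom_\Delta([n],[1])$ with that set map'' is vacuous, and the appeal to ``the defining commutative square of canonical factorization'' (which the paper only sets up for morphisms of $\Delta$) to rewrite $\psi_j\circ h_i$ as $\beta_i\circ\psi_j$ is not available as stated; even in the $\Delta$-case it requires the small check that the $\Delta$-component $\beta_i^\ast\psi_j$ equals $\psi_j$. The repair is exactly what the paper's proof does: keep the given augmentations $\phi_i\in\Hom_{\Delta\G}([n],[1])$, observe that the set $\Hom_{\Delta\G}([n],[1])^j$ of morphisms sending exactly $j$ to $1$ is a torsor under precomposition by $\Stab(j)$ (existence and uniqueness of your $h_i$ then follow from canonical factorization, not from uniqueness in $\Delta$), and reduce the exchange relation $\phi_j\circ h=\beta\circ\phi_j$ to the case $j=n$ by conjugation, i.e.\ prove that postcomposition by $\G_1^0$ acts simply transitively on $\Hom_{\Delta\G}([n],[1])^j$ --- which is where Lemma \ref{lem:augpullback} does its work.

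The homomorphism property is also not established by your first argument: the $\G_n$-action and the wreath-product action agree only at the single representative $\mathfrak{y}$ --- at any other representative $\mathfrak{z}$ the comparison map is the generally different $\eta_n^{\mathfrak{z}}$, which is precisely why $\eta$ carries the superscript --- so ``the actions agree pointwise, the wreath action is faithful'' proves nothing. A salvage along these lines needs (i) that changing the vertex frame (precomposition of all augmentations) commutes with the operadic action on half-edges (postcomposition and relabelling), and (ii) that the wreath-product action has trivial stabilizer at $\mathfrak{y}$ (again a consequence of Lemma \ref{lem:augpullback}); and even then a one-point comparison of commuting actions yields a priori an anti-homomorphism unless the left/right conventions are arranged carefully. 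The direct computation from the canonical factorization of $\phi_i\circ gg'$ that you mention --- in effect the explicit formula of Remark \ref{rem:tracehomo} --- is the reliable route, but you have not carried it out.
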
 

\begin{proof}
We notice that, given such $g\in \G_n$, it induces a permutation of $\{0,1,\ldots, n\}=\Hom_\Delta ([0],[n])$ via the map $\lambda_n$. This permutation gives us a new labeling of the half-edges. 

The condition that we want, expressed in terms of the standard form of the original corolla, is that
\begin{equation*}
\phi_j \circ g = h_i\circ \phi_i 
\end{equation*} 
for a unique $h_i\in \G_1^0$. More clearly, if we let
\begin{equation*}
\Hom_{\Delta\G} ([n],[1])^i=\left\lbrace \psi\in \Hom_{\Delta\G} ([n],[1])\; \big\vert \; \psi(i)=1 \operatorname{ and } \psi^{-1}(1)=i \right\rbrace
\end{equation*}
then we want that the action of $\G_1^0$ on $\Hom_{\Delta\G} ([n],[1])^i$  by postcomposition is simply transitive. However, choosing a representative $\gamma\in \Hom_{\Delta\G} ([n],[1])^i$, we see that this is the same as saying that the subgroup $\gamma^\ast(\G_1^0)$ acts transitively by precomposition. 

Without loss of generality, we can reduce this to the case where $i=n$, since every morphism in $\Hom_{\Delta\G} ([n],[1])^i$ is given by a composition of a morphism in $\Hom_{\Delta\G} ([n],[1])^n$ with an element of $\G_n$. Reducing to this case, we see that the elements of $\G_n$ that act on $\Hom_{\Delta\G} ([n],[1])^n$ are precisely the members of $\operatorname{Stab}(n)\subset \G_n$. Since the action of $\G_n$ on $\Hom_{\Delta\G}([n],[0])$ is simply transitive, it suffices to show that $\G_0$ is isomorphic to $\operatorname{Stab}(n)$ via the homomorphism induced by pullback.

However, this is precisely the statement of lemma \ref{lem:augpullback}, so the proposition is proved.
\end{proof}
\begin{rem}\label{rem:tracehomo}
Given a trace $\mathfrak{y}$ with augmentation morphisms given by $\phi_i=\psi\circ g_i$, we can compute the form of the homomorphism $\eta_n^\mathfrak{y}$. Allowing $n$ to represent the morphism in $\Hom_\Delta ([0],[n])$ with target $n$, we can write:
\begin{eqnarray}\label{invform}
\eta_n^\mathfrak{y}:\G_n & \to & \G_0\wr \Sigma_{n+1}\\
 g & \mapsto & \left(n^\ast (g_0\circ g\circ g_{\sigma^{-1}(0)}^{-1}),\ldots,n^\ast (g_n\circ g\circ g_{\sigma^{-1}(n)}^{-1}), \sigma \right)
\end{eqnarray}
where $\sigma:=\lambda_n(g)$. 

We will, in particular, denote by $\eta_n$ the invariance condition defined by $\mathfrak{b}_n$
\end{rem}

We now can list the data we have extracted from $Z$.

\begin{defn}
A $\Delta\G$ Frobenius Algebra consists of the following data:
\begin{itemize}
\item A unital associative algebra $A$ equipped with an action of $\G_0$ such that the multiplication $\mu_2$ is equivariant under $\chi_2$.
\item A non-degenerate trace $\beta_1:A\to k$ such that
\begin{equation*}
\beta_n=\beta_1\circ\mu_n:A^{\otimes n}\to k
\end{equation*}
is invariant under $\eta_n$.
\end{itemize}
\end{defn}

\begin{defn}
The category $\Gfrob_k$ has objects $\Delta\G$ Frobenius Algebras over $k$. A morphism between two such algebras $(A,\beta_1)$ and $(B,\xi_1)$ is given by a $G_0$-equivariant algebra homomorphism 
\begin{equation*}
h: A\to B
\end{equation*}
such that 
\begin{equation*}
\beta_1(a,b)=\xi_1(h(a),h(b))
\end{equation*} 
\end{defn}

\begin{lem}\label{lem:cangen}
A symmetric monoidal functor 
\begin{equation*}
X:\Gen\to \Vect_k
\end{equation*}
can be reconstructed from its target $\Delta\G$ Frobenius Algebra.
\end{lem}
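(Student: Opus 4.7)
The plan is to define $X$ by specifying its values on a generating set of morphisms of $\Gen$ using the $\Delta\G$-Frobenius algebra data, and then verify that the resulting assignment descends to a well-defined symmetric monoidal functor.

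Since $\Gen$ is, by definition, generated under $\circ$ and $\otimes$ by the morphisms in $P_{\Delta\G}$, the 1-trace $\mathfrak{b}_1$, and the 1-cotrace $\mathfrak{p}_1$, any symmetric monoidal functor on $\Gen$ is uniquely determined by its values on these. Given a $\Delta\G$-Frobenius algebra $(A,\beta_1)$, I set $X(\scri)=A$; define $X$ on $P_{\Delta\G}$ via the $P_{\Delta\G}$-algebra structure (by Proposition \ref{prop:assprop} this amounts precisely to a unital associative multiplication equipped with a $\chi_2$-equivariant $\G_0$-action, both of which are supplied by the Frobenius data); set $X(\mathfrak{b}_1)=\beta_1$; and set $X(\mathfrak{p}_1)=\eta$, where $\eta:k\to A,\ 1\mapsto 1_A$ is the unit of the algebra.

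The verification that this extends to a well-defined symmetric monoidal functor reduces to showing that the relations among generator compositions in $\Gen$ are all sent to equalities in $\Vect_k$. These relations organize into four classes: (i) operadic relations within $P_{\Delta\G}$, absorbed into the $P_{\Delta\G}$-action on $A$; (ii) the identity $\mathfrak{b}_n=\mathfrak{b}_1\circ\mathfrak{m}_n$, matched directly by $\beta_n=\beta_1\circ\mu_n$; (iii) the $\G_n$-invariance of the $n$-traces given by Lemma \ref{lem:invhomo}, which matches the $\eta_n$-invariance built into the Frobenius axioms; and (iv) contractions of edges attached to $\mathfrak{p}_1$, which must correspond to the unit axiom $1_A\cdot a=a$ in the algebra.

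I expect (iv) to be the main obstacle. Concretely, attaching $\mathfrak{p}_1$ to an incoming half-edge of a multiplication $\mathfrak{m}\in P_{\Delta\G}(n)$ and contracting the resulting edge yields an element of $P_{\Delta\G}(n-1)$, and one must check that applying $X$ gives the same map as inserting $1_A$ into the corresponding slot of $\mu_{\mathfrak{m}}$. To handle this I would use canonical factorization to track how the augmentations transform under the contraction, and then leverage the fact (noted in the remark preceding the lemma) that $\mathfrak{p}_1$ is $\G_0$-invariant, so that $1_A$ must also be $\G_0$-invariant; this invariance is exactly what is needed for the insertion of $1_A$ to commute past the $\G_0$-factors appearing in the augmentations. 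Once this reduction is established for every choice of slot and augmentation, the statement follows from the standard fact that inserting the unit of a unital algebra into a multiplication reduces its arity by one.
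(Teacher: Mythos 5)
Your proposal follows essentially the same route as the paper: the values of $X$ are forced on the generators ($P_{\Delta\G}$, $\mathfrak{b}_1$, $\mathfrak{p}_1$), and the real content is well-definedness on equivalence classes, which reduces to invariance of the traces under automorphisms of the central vertex and is supplied by the $\eta_n$-invariance in the Frobenius axioms via Lemma \ref{lem:invhomo}, exactly as in the paper (the paper additionally notes that invariance under all $\eta_n^{\mathfrak{y}}$ follows from the $\eta_n$ case because every $n$-trace lies in the $\Sigma_n\wr\G_0$-orbit of $\mathfrak{b}_n$). The one point of difference is your class (iv): you explicitly isolate the cotrace/unit contraction, which the paper's proof silently absorbs into the claim that the assignment is functorial by construction, and your sketched treatment of it (canonical factorization of the augmentations plus $\G_0$-invariance of $\mathfrak{p}_1$, so that $1_A$ is $\G_0$-invariant) is consistent with the paper's framework rather than a different argument.
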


\begin{proof}
We know what $X$ must assign on any given morphism. Clearly the assignment is well functorial and consistent on $P_{\Delta\G}$, and, by definition, it is functorial on $\Gen$. It remains to check that it is well-defined on equivalence classes. However, this amounts to showing that the image of a morphism does not depend on representative, ie, showing that the traces are invariant under automorphisms of the central vertex. That is, we need to show that the traces are invariant under all the homomorphisms $\eta_n^\mathfrak{n}$. However, since any n-trace can be represented by a composition of $\mathfrak{b}_n$ and an element of $\Sigma_n\wr\G_0$, we see that the $\eta_n$ invariance conditions are sufficient to guarantee the other invariance conditions are satisfied. 
\end{proof}

\subsection{Examples and Computations}
\begin{exmp}\label{ex:first}
The simplest case is the cyclic case $\Delta\G=\Lambda$, which corresponds to $GL^+(2,\R)$ so that $\Cob=2\text{Cob}^o$. In this case, we see that $P_{\Lambda}=\mathcal{ASS}$, and that $\G_0$ is trivial. As a result, the action of an element of $g\in\G_n$ on an $n$-trace given in terms of the operadic action on half-edges is just the action of $\lambda_n(g)\in\Z/(n+1)\Z$. As a result, we can simplify the condition of lemma \ref{lem:invhomo} to require simply that traces be invariant under cyclic permutation of inputs, as a result, we see that a $\Delta\G$-frobenius algebra is precisely a frobenius algebra.
\end{exmp}

\begin{exmp}\label{ex:dih}
The next simple case is the dihedral case $\Delta\G=\Xi$. Here, the corresponding additional datum on surfaces of a reduction of the structure group is trivial, so that $\Cob$ is just the unoriented cobordism category.  Since we can find a copy of $\Lambda$ in $\Xi$, we can take the copy of $\mathcal{ASS}\subset P_{\Xi}$ given by $\Lambda$-structured trees. In this case, we can compute $\chi_2$, and we see that  an algebra over $P_{\Xi}$ is an algebra $A$ with an anti-automorphism $\ast$. More precisely, if we pull back the non-trivial element $f\in\G_1^0$ (the element which simply reverses orientation), we see that it pulls back to the reflection of the center circle fixing the outgoing marked point, so that it switches the inputs. Pushing out along the incoming augmentation maps, we see that it amounts to reversing orientation in each case, so that we get
\begin{equation*}
\chi_2(f)=(f,f;(1,2))
\end{equation*}
ie, that $f$ acts as an anti-automorphism of the algebra in question.
We then see that $\Xi$-frobenius algebras are precisely frobenius algebras $(A,\ast,\beta_1)$ with involution such that $\beta_1(a^\ast)=\beta_1(a)$.
\end{exmp}

\begin{exmp}\label{ex:fullex}
In the $N$-cyclic case $\Delta\G=\Lambda_N$, corresponding to $\text{Spin}_N(2)$-structured surfaces, we have to be a little more careful. To find a copy of $\Ass$, we need a particular characterization of $\Lambda_N$, given in \cite{DK} example 1.24. Let $C$ be the unit circle in $\C$, and let $C_n$ be $C$ equipped with $n+1$ marked points $\{0,1,\ldots,n\}$ included into $C$ via the map
\begin{equation*}
k\mapsto \exp\left(\frac{2\pi i k}{n+1}\right)
\end{equation*} 
Fixing an $N$-sheeted cover $\tilde{C}\to C$, we can then describe $\Lambda_N$ in the following way: Its objects are $\langle n\rangle$ for all $n$. A morphism $\langle m\rangle\to\langle n\rangle$ is given by a homotopy class of monotone maps $C_m\to C_n$ preserving the marked points together with a lift to $\tilde{C}$.

Using this, we can define elements $\tau_i^n\in\G_n$ which will allow us to choose a copy of $\Ass$ consisting of multiplications $\mathfrak{m}_i$. Let $t_i^n$ be the automorphism of $[n]$ in $\Lambda$ sending $0$ to $i$, represented as a homotopy class of monotone maps $C_n\to C_n$ preserving the marked points. Then there is a lift $\tau_i^n$ of $t_i^n$ to $\tilde{C}$ which is homotopy equivalent to the smallest positive rotation of $\tilde{C}_{Nn}$ covering $t_i$. If we define, for each $n$, a multiplication $\mathfrak{m}_n$ with $n$ incoming half-edges via the augmentation maps $(\psi_n\circ\tau_2^n,\ldots,\psi_n\circ\tau_n^n,\psi_\circ\tau_0^n,\phi_n)$, where $\psi$ is the map in $\Hom_\Delta([n],[1])$ sending $n$ to $1$ and everything else to $0$, and $\phi_n$ is the map in $\Hom_\Delta([n],[1])$ sending $0$ to $0$ and everything else to $1$, it is trivial to verify that $\{\mathfrak{m}_n\}$ forms a system of multiplications. Moreover, we can see that the traces $\mathfrak{b}_n:=\beta_1\circ\mathfrak{m}_n$ can be represented by the maps $\{\psi_{n-1}\circ\tau_{n-1}^i\}_{i=0}^{n-1}$. However, since by construction, for any map $i\in\Hom_\Delta([0],[n])$, we have $i^\ast(\tau_j^n)=id_[0]$, we see that the homomorphisms $\eta_n$ are precisely the homomorphisms $L_n$ from the proof of theorem 1.37 in \cite{DK}. 

We can also calculate $\chi_2$. If we pull back an element $f\in\G_1^0$, which can be represented by a rotation of $\tilde{C}$ by $2k$ markings, along $\phi_2$, we get a rotation by $3k$ markings of $\tilde{C}$. Pushing this out along the incoming augmentation maps, we again get a rotation of $\tilde{C}$ by $2k$ markings. That is, 
\begin{equation*}
\chi_2(f)=(f,f;id)
\end{equation*} 
or, more usefully: $ f\circ\mathfrak{m}=\mathfrak{m}\circ (f\sqcup f)$. Hence, the elements of $\G_0$ act on $A$ by automorphisms.

Since this is the case, our definition of a $\Lambda_N$-frobenius algebra simplifies to the one from \cite{DK}, and so we have the following characterization.

A $\Lambda_N$-frobenius algebra is a finite-dimensional unital associative algebra $A$ together with a linear function $\beta_1:A\to k$ such that 
\begin{itemize}
\item The form $\beta_2(a,b)=\beta_1(ab)$ is a (non necessarily symmetric) non-degenerate bilinear form on $A$.
\item The Nakayama automorphism $F$ of $\beta_2$ is an algebra automorphism of $A$ such that $F^N=id_A$
\end{itemize}
\end{exmp}

\begin{exmp}\label{ex:ndih}
In the $N$-dihedral case $\Delta\G=\Xi_N$, we can use a similar characterization to the one for $\Lambda_N$ (this time allowing both orientation preserving and reversing circle maps). With this characterization we immediately find a copy of $\Lambda_N$ in $\Xi_N$, and using the construction from example \ref{ex:fullex}, we can again reduce our notion of a $\Delta\G$-frobenius algebra to that of \cite{DK}. In this case, we find that a $\Xi_N$-frobenius algebra is a $\Lambda_N$-frobenius algebra equipped with a trace-preserving involution.
\end{exmp}

\begin{exmp}\label{ex:last}
In the paracyclic case $\Delta\G=\Lambda_\infty$, we can again use a characterization with circle maps $C_n\to C_m$, this time using a lift to a chosen universal cover $\R\to C$. The construction in example \ref{ex:fullex} generalizes, and we find that a $\Lambda_\infty$-frobenius algebra is a $\Lambda_N$-frobenius algebra $A$ in which we no longer require that $F^N=id_A$. 

As in example \ref{ex:ndih}, we can carry our argument over to the paradihedral case $\Delta\G=\Xi_\infty$. In this case, we find that a $\Xi_N$-frobenius algebra is a $\Lambda_\infty$-frobenius algebra with a trace-preserving involution. 
\end{exmp}


\section{The Construction of CSFTs}

Now that we have a characterization of data arising from a CSFT, we show that these data are, in fact, sufficient to construct a CSFT. We now assume that we have a $\Delta\G$-Frobenius Algebra $A$, and attempt to construct a CSFT $Z$ from it. 

From lemma \ref{lem:cangen}, we know that $A$ gives rise to a functor:
\begin{equation*}
X: \Gen\to \Vect_k
\end{equation*}
We first claim that our `generator subcategory' does in fact generate $\Cor$. in some sense. 

First we note that for any 2-cotrace $\mathfrak{p}$, there is a unique 2-trace $\mathfrak{q}$ such that $\mathfrak{p}$ and $\mathfrak{q}$ compose to the identify as in figure \ref{fig:trfig}. As a result, the functor $X$ together with the non-degeneracy of $X(\mathfrak{q})$ uniquely determines an assignment on $\mathfrak{p}$. We will write $\mathfrak{p}_2$ for the 2-cotrace which composes with $\mathfrak{b}_1$ to the identify.

\begin{prop}
Every morphism in $\Cor$ can be expressed in terms of elements of $\Gen$ and a 2-cotrace $\mathfrak{p}_2$.
\end{prop}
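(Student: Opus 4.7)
The plan is to show that any morphism in $\Cor$, given as an equivalence class of $\Delta\G$-structured augmented graphs, can be obtained by composition and tensor product from elements of $\Gen$ together with copies of the 2-cotrace $\mathfrak{p}_2$. I would proceed by three successive reductions on a chosen representative $\Gamma$ of the equivalence class, followed by a check that the $\Delta\G$-structure matches throughout.

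First, I would reduce $\Gamma$ to a \emph{rose}. Since the canonical contraction of section 2 (guaranteed by the pullback corollary) is always available along any internal edge joining distinct vertices, iterating produces a representative with a single vertex and $g$ loops, where $g$ is the rank of $\pi_1(|\Gamma|)$. Second, I would eliminate these loops one at a time. A single loop at the central vertex can be recognized as the Frobenius-type ``trace'' of a loop-free corolla $c$ with one extra incoming and one extra outgoing half-edge, namely
\begin{equation*}
(\mathrm{id} \otimes \mathfrak{b}_2) \circ (c \otimes \mathrm{id}_\scri) \circ (\mathrm{id} \otimes \mathfrak{p}_2),
\end{equation*}
where $\mathfrak{b}_2 = \mathfrak{b}_1 \circ \mathfrak{m}_2 \in \Gen$. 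Iterating over the $g$ loops reduces the problem to expressing an arbitrary loop-free corolla.

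Third, a corolla $c:\scri^{\otimes n}\to \scri^{\otimes m}$ with mixed augmentations can be ``bent'' into an $(n+m)$-trace $\tilde c:\scri^{\otimes(n+m)}\to I$ by folding each outgoing half-edge back to an incoming one using one copy of $\mathfrak{p}_2$ and one of $\mathfrak{b}_2$. The resulting $\tilde c$ lies in $\Gen$, being $\mathfrak{b}_1$ precomposed with an $(n+m)$-fold multiplication from $P_{\Delta\G}$. The zigzag identity for the pair $(\mathfrak{p}_2,\mathfrak{b}_2)$ then inverts the bending, realising $c$ itself as the composition of $\tilde c$ with $m$ factors of $\mathfrak{p}_2$ tensored with identities.

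The main obstacle is verifying that these three decompositions preserve the full $\Delta\G$-structure --- the augmentation maps and the torsors of structured frames --- not merely the underlying combinatorial graph. Each composition produces a multi-vertex graph whose structure must, after contraction back to a single vertex, reproduce the $\G$-structure of the central vertex of $\Gamma$. Here lemma \ref{lem:augpullback} is the key tool: it identifies the stabilizer of each augmented half-edge with $\G_1^0$, ensuring that the torsor contributed by each inserted $\mathfrak{p}_2$ matches the torsor at the vertex of $c$. Balanced-ness further ensures that the bent trace $\tilde c$ determines a unique element of $P_{\Delta\G}$, so that the assembly of canonical factorizations through the three steps yields an equality of equivalence classes, not merely of underlying graphs.
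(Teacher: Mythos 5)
Your proposal is correct and follows essentially the same route as the paper: contract a representative to a rose, cut the loops using the $\mathfrak{b}_2$/$\mathfrak{p}_2$ pairing, and decompose the resulting corolla by capping outgoing half-edges with $\mathfrak{b}_2$ and undoing this with $\mathfrak{p}_2$ via the zigzag (non-degeneracy) identity, with the structured-set bookkeeping handled by lemma \ref{lem:augpullback}. The only cosmetic differences are that the paper caps all but one outgoing half-edge so as to land in $P_{\Delta\G}$ rather than in an $(n+m)$-trace, and cuts each loop into two outgoing half-edges closed by a single 2-trace rather than by a partial trace using one $\mathfrak{p}_2$ and one $\mathfrak{b}_2$.
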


The following lemmas will suffice to prove the proposition:

\begin{lem}
Any corolla $\mathfrak{u}$ that is not a 1-trace can be expressed as a concatenation of copies of $\mathfrak{p}_2$ with an element of $P_{\Delta\G}$. 
\end{lem}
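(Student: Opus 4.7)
The plan is to proceed by induction on the number $k$ of outgoing half-edges of $\mathfrak{u}$. In the base case $k = 1$, the corolla $\mathfrak{u}$ has exactly one outgoing half-edge with all others incoming, so by definition it represents an element of $P_{\Delta\G}$ and the statement holds trivially with no copies of $\mathfrak{p}_2$.

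For the inductive step with $k \geq 2$ I would ``peel off'' one outgoing half-edge at a time. Choose an outgoing half-edge $h$ of $\mathfrak{u}$ with augmentation $\phi_h : H(v) \to \epsilon[1]$. Construct an auxiliary corolla $\mathfrak{u}'$ on the same underlying $\Delta\G$-structured set as $\mathfrak{u}$, agreeing with $\mathfrak{u}$ on every augmentation except that at $h$, which is replaced by an incoming augmentation $\phi_h'$ to be specified momentarily. Then $\mathfrak{u}'$ has $k - 1 \geq 1$ outgoing half-edges, hence is not a $1$-trace, so by the inductive hypothesis $\mathfrak{u}'$ is a concatenation of copies of $\mathfrak{p}_2$ with an element of $P_{\Delta\G}$. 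Concatenating one further copy of $\mathfrak{p}_2$ onto $\mathfrak{u}'$ at $h$ and applying the canonical contraction should then recover $\mathfrak{u}$, completing the induction.

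The substantive content is in choosing $\phi_h'$ so that the concatenation-and-contraction actually reproduces $\mathfrak{u}$. The underlying structured set of the contracted corolla is the limit in $\cG$ of the gluing diagram for $\mathfrak{p}_2$ and $\mathfrak{u}'$, and by the pullback lemma for structured sets this limit has the correct cardinality. The augmentations at half-edges distinct from $h$ transfer unchanged from $\mathfrak{u}'$ and therefore automatically agree with those of $\mathfrak{u}$. The only delicate point is the matching at $h$: the outgoing augmentation at the surviving half-edge of the glued $\mathfrak{p}_2$ must coincide with $\phi_h$. To arrange this I would invoke Lemma \ref{lem:augpullback}, whose canonical isomorphism $\phi^\ast : \G_1^0 \overset{\cong}{\to} \Stab(h) \subset \G_n$, combined with the duality $D_\G$ on $\Delta\G$, provides a canonical way to produce an incoming augmentation $\phi_h'$ whose gluing with the outgoing half-edge of $\mathfrak{p}_2$ inverts back to $\phi_h$.

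The expected main obstacle is precisely this compatibility check: one must verify, by tracing through canonical factorization inside the pullback description of the contraction, that the duality-induced flip from $\phi_h$ to $\phi_h'$ is undone by the gluing step, leaving the augmentations of $\mathfrak{u}$ intact. This is where the balanced hypothesis on $\Delta\G$ (as opposed to just crossed-simplicial structure) is essential, and it is the most delicate bookkeeping step in the argument.
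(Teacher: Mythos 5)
Your overall strategy---reduce to an element of $P_{\Delta\G}$ and then restore the outgoing half-edges using copies of $\mathfrak{p}_2$---is the same as the paper's, but your execution has a genuine gap at its central step. In the inductive step you posit an auxiliary corolla $\mathfrak{u}^\prime$ obtained by replacing the outgoing augmentation $\phi_h$ by some incoming augmentation $\phi_h^\prime$, and you assert that $\phi_h^\prime$ can be chosen (via Lemma \ref{lem:augpullback} and the duality) so that concatenating $\mathfrak{p}_2$ at $h$ and contracting returns exactly $\mathfrak{u}$. That assertion is never proved; it is precisely the non-degeneracy statement for $\mathfrak{b}_2$ (the identity of figure \ref{fig:trfig}) in disguise, and without carrying out the canonical-factorization bookkeeping you yourself flag as ``the most delicate step,'' the argument does not close. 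Note also that after gluing $\mathfrak{p}_2$ and contracting, the structured set at the new vertex is a pullback in $\cG$, not literally the structured set of $\mathfrak{u}$, so the claim that the augmentations at the other half-edges ``transfer unchanged'' also depends on the identification coming from that pullback---further bookkeeping left undone.

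The missing idea that eliminates all of this is to define $\mathfrak{u}^\prime$ not abstractly but concretely: cap the chosen outgoing half-edge(s) of $\mathfrak{u}$ with copies of the 2-trace $\mathfrak{b}_2$ and contract. The resulting corolla automatically agrees with $\mathfrak{u}$ away from the capped edges and has exactly one outgoing half-edge, hence represents an element of $P_{\Delta\G}$; and since $\mathfrak{p}_2$ and $\mathfrak{b}_2$ were already shown to compose to the identity, concatenating $\mathfrak{p}_2$ at each capped edge returns $\mathfrak{u}$ with no further verification and no fresh appeal to balancedness (that hypothesis was already spent in establishing $\mathfrak{p}_2$ and the non-degeneracy of $\mathfrak{b}_2$). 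This also makes your induction unnecessary: the paper caps every outgoing half-edge except one simultaneously and uncaps them all at once.
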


\begin{proof}
If $\mathfrak{u}\in P_{\Delta\G}$, the lemma is trivial. If not, concatenate every outgoing half-edge except one with a copy of $\mathfrak{b}_2$. The resulting corolla $\mathfrak{u}^\prime$ represents an element in $P_{\Delta\G}$. However, since $\mathfrak{p}_2$ concatenates with $\mathfrak{b}_2$ to be the identity, concatenating every altered half-edge in $\mathfrak{u}^\prime$ with $\mathfrak{p}_2$ yields $\mathfrak{u}$ back.
\end{proof}

\begin{lem} \label{rosestruct}
Any rose $\mathfrak{k}$ can be written in terms of concatenations of a corolla and a 2-trace.
\end{lem}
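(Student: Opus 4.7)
The plan is to proceed by induction on the number of loops of the rose $\mathfrak{k}$. The base case is when $\mathfrak{k}$ has no loops, so it is already a corolla and there is nothing to show. For the inductive step, I would select a single loop $\ell = \{h, h'\}$ at the unique vertex $v$ and exhibit $\mathfrak{k}$ as a concatenation of a rose with one fewer loop and a 2-trace $\mathfrak{b}_2$; the inductive hypothesis will then complete the argument.

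To carry out the inductive step, I would construct an auxiliary augmented structured graph $\mathfrak{k}^\circ$ on the same vertex $v$ with the same structured half-edge set $H(v)$, but where the involution $\mathfrak{y}$ is modified to fix $h$ and $h'$, turning them into external half-edges. I would equip these new external half-edges with outgoing augmentations $\phi_h, \phi_{h'} \colon H(v) \to \epsilon[1]$ satisfying $\phi_h(h) = 0$ and $\phi_{h'}(h') = 0$, using Lemma \ref{lem:augpullback} (which applies since $\Delta\G$ is balanced) to see that such morphisms of $\Delta\G$-structured sets exist. By construction $\mathfrak{k}^\circ$ is a rose with one fewer loop than $\mathfrak{k}$, preserving all other loops and external half-edges.

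Concatenating $\mathfrak{k}^\circ$ with $\mathfrak{b}_2$ at the pair $(h,h')$ (matching $h$ with one incoming half-edge of $\mathfrak{b}_2$ and $h'$ with the other) produces a two-vertex graph $\Gamma$ with two parallel edges $e_1, e_2$ running between $v$ and the trace vertex $v'$. I would then apply the canonical edge contraction of one of these edges, say $e_1$, using the Corollary on limits of diagrams of $\Delta\G$-structured sets. The merged vertex then carries a loop given by the remaining edge $e_2$, and the resulting graph is a rose $\mathfrak{k}''$.

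The main obstacle is the last step: verifying that $\mathfrak{k}'' \simeq \mathfrak{k}$. By the Corollary, the limit structured set on the merged vertex has underlying set $(H(v) \setminus \{h\}) \cup (H(v') \setminus \{i_1\})$, which matches $H(v)$ after the obvious relabeling of the trace's remaining half-edge as $h$. The genuinely delicate point is to check that the $\Delta\G$-torsor structure produced by the limit, together with the explicit augmentations I chose on $h, h'$ and the fact that $H(v') \cong \epsilon[1]$, assembles by the universal property into exactly the original structured set $H(v)$. This is essentially a computation combining canonical factorization with the equivariance condition in the definition of morphisms in $\cG$, and the freedom in choosing augmentations satisfying the outgoing constraints ensures the construction is canonical up to the equivalence of $\Delta\G$-structured graphs.
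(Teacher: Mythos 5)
Your overall route is the same as the paper's: cut a loop open into two outgoing external half-edges, cap them with a 2-trace, and argue that contracting the resulting two-vertex graph returns the rose (the paper does the single-loop case and remarks that the general case follows by induction, exactly as you set up). However, the step you defer is not a routine verification, and as you have set it up it would in general fail. You choose the augmentations $\phi_h,\phi_{h'}$ on the cut half-edges merely to exist (via Lemma \ref{lem:augpullback}) and then concatenate with a fixed $\mathfrak{b}_2$; but the set of admissible augmentations at each cut half-edge is a torsor under $\G_1^0\cong\G_0$, and different choices produce, after contraction, roses whose loop datum differs by an element of $\G_1^0$ --- this is precisely the mechanism behind the identification $P_{\Delta\G}(1)\cong\G_1^0$ in Proposition \ref{prop:monoid}. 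So for a generic choice one gets a rose that is \emph{not} equivalent to $\mathfrak{k}$, and your closing claim that ``the freedom in choosing augmentations\ldots ensures the construction is canonical'' has it backwards: that freedom is exactly why a specific compatible choice must be exhibited, and exhibiting it is the actual content of the lemma.

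The paper avoids this by running the construction in the opposite direction: starting from $\mathfrak{k}$, it expands the incidence diagram by inserting the 2-element structured set $M$ of the loop edge as a new bivalent vertex mapping by identities to two parallel edges, observes that this expanded graph contracts canonically to $\mathfrak{k}$ (the lift of the pullback functor is the cone of the limit diagram), and only then reads off the augmentations on the cut half-edges from a chosen labeling $M\cong\epsilon[1]$. Compatibility with the original structured loop is then automatic rather than something to check afterwards. To repair your argument you should do the same: derive $\phi_h,\phi_{h'}$ (equivalently, the choice of 2-trace) from a trivialization of the structured set of the loop edge of $\mathfrak{k}$, rather than choosing them independently and attempting to verify $\mathfrak{k}''\simeq\mathfrak{k}$ after the fact.
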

\begin{proof}
We will perform the computation in the case for a single loop, which then generalizes by induction. 

Suppose $\mathfrak{k}$ is a rose with a single loop. Then we can write the incidence diagram of $\tilde{I}_\mathfrak{k}$ as 
\begin{equation*}
\mathfrak{u}\overset{f}{\underset{g}{\rightrightarrows}} M
\end{equation*} 
where  $\mathfrak{u}$ is a corolla and $M$ is a $\Delta\mathfrak{G}$-structured set with 2 elements. 

We can expand this diagram to the graph
\begin{equation*}
\xymatrix{
 &
 M &
 \\
 \mathfrak{u}\ar[ur]^f \ar[dr]_g &
 &
 M\ar[ul]_{id} \ar[dl]^{id}\\
 &
 M &
 {}
}
\end{equation*} 
Which admits an obvious functor of incidence categories to $\Sigma$. 

The lift of the pullback functor to $\mathcal{G}$ is then given by:

\begin{equation*}
\xymatrix{
\mathfrak{u}\ar@/^/[rr]^f \ar@/_/[rr]_g \ar[ddd]_{id_\mathfrak{u}}&
&
M\ar[ddl]_{id_M} \ar[ddddl]^{id_M} \ar[ddd]^{id_M}\\
&
&
\\
 &
 M &
 \\
 \mathfrak{u}\ar[ur]^f \ar[dr]_g &
 &
 M\ar[ul]_{id} \ar[dl]^{id}\\
 &
 M &
 {}
}
\end{equation*} 
And by choosing labelings of the structured sets $M$ at the centers of the edges, we get an augmented corolla $\mathfrak{u}$ which, when concatenated with
 $M\overset{id}{\leftarrow} M\overset{id}{\rightarrow} M$, yields $\mathfrak{k}$.
\end{proof}

Using these results, we can lay out an algorithm for assigning a value $Z(\mathfrak{k})$ to a given morphism $\mathfrak{k}$ in $\Cor$:
\begin{enumerate}
\item If $\mathfrak{k}$ is represented by a rose, write $\mathfrak{k}$ as $(\mathfrak{b}_2\sqcup\ldots\sqcup\mathfrak{b}_2)\circ\mathfrak{s}$, where $\mathfrak{s}$ is represented by a corolla.
\item Write the corolla $\mathfrak{s}$ as a composition of $\mathfrak{m}\in P_{\Delta\G}$ with 2-cotraces $\mathfrak{p}$. 
\item Assign as $Z(\mathfrak{k})$ the appropriate composition of the morphisms $Z(\mathfrak{m})$, $Z(\mathfrak{p})$, and $Z(\mathfrak{b}_2)$.
\end{enumerate}

We now need to show that this assignment is functorial and independent of the choices made.

Firstly, we notice that it does not matter which 2-traces and 2-cotraces we use in the algorithm, as any two will be related by an element of $P_{\Delta\G}(1)$, so that changing which (co)traces we use will simply involve inserting an element of $\G_0$ and its inverse into the computation. 

For ease of writing, we assign some notation:
\begin{eqnarray*}
\beta_i & := & Z(\mathfrak{b}_i)\\
\mu_i & := & Z(\mathfrak{m}_i)\\
\rho & := & Z(\mathfrak{p})
\end{eqnarray*}
where $\mathfrak{p}$ is the 2-cotrace which composes with $\mathfrak{b}_2$ to the identity.

\begin{lem}
The assignment $Z(\mathfrak{k})$ for $\mathfrak{k}$ represented by a corolla does not depend on the choices made. 
\end{lem}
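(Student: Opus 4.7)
The plan is to show that for any corolla $\mathfrak{s}$ representing $\mathfrak{k}$ and any sequence of choices in the algorithm, the resulting value in $\Vect_k$ depends only on $\mathfrak{k}$. I identify three sources of ambiguity and treat each via one of the defining axioms of a $\Delta\G$-Frobenius algebra, then glue them together using non-degeneracy of $\beta_2$. Throughout, assume $\mathfrak{s}$ has $n$ incoming and $k$ outgoing half-edges, with $n+k \geq 2$; the one remaining case, when $\mathfrak{s}$ is already a $1$-trace, is trivial.

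First I would handle the choice of $\mathfrak{p}_2$ and $\mathfrak{b}_2$. By Proposition~\ref{prop:monoid}, any two $2$-cotraces differ by an element $g\in\G_0$ acting operadically on a half-edge; non-degeneracy $\mathfrak{b}_2\circ\mathfrak{p}_2=\text{id}$ then forces the corresponding $2$-traces to differ by $g^{-1}$ on the same edge. Since the $\G_0$-action on $A$ is part of the Frobenius structure, these factors cancel at each paired insertion in the algorithmic expression, so the value does not depend on which $(\mathfrak{b}_2,\mathfrak{p}_2)$ pair is used.

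Next I would handle the choice of which outgoing half-edge $h_*$ is preserved when writing $\mathfrak{s}$ as a composition of some $\mathfrak{m}\in P_{\Delta\G}$ with copies of $\mathfrak{p}_2$. Two such choices $h_*,h_*'$ yield multiplications $\mathfrak{m}$ and $\mathfrak{m}'$ in $P_{\Delta\G}$ whose central $\Delta\G$-structured sets agree and whose augmentations differ by a $\G_{n+k-1}$-action exchanging the labels. Post-composing each with $\mathfrak{b}_1$ produces two $(n+k-1)$-traces related exactly by the operadic prescription in Lemma~\ref{lem:invhomo}. The invariance of $\beta_{n+k-1}=\beta_1\circ\mu_{n+k-1}$ under $\eta_{n+k-1}$, which is part of the definition of a $\Delta\G$-Frobenius algebra, translates this into equality of their images under $X$. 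The same argument, applied to automorphisms of the central vertex rather than to the choice of output, proves independence of the particular corolla representative of the equivalence class $\mathfrak{k}$.

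Finally I would assemble the pieces. The two putative values of $Z(\mathfrak{s})$ are obtained from the corresponding $(n+k-1)$-traces by applying $\rho$ on $k-1$ inputs to convert them into outputs; because $\beta_2\circ\rho=\text{id}$, the equality of the $(n+k-1)$-traces established above transfers directly to the maps $A^{\otimes n}\to A^{\otimes k}$, combined with the $\G_0$-cancellations from the first step. The main obstacle is the combinatorial bookkeeping in matching the $\G_0$-factors of $\eta_{n+k-1}(g)$ in Remark~\ref{rem:tracehomo} against the twists that $\rho$ introduces through the outputs; however, since those factors are produced by pullbacks along the augmentation maps, they are controlled by the canonical factorization identities already exploited in Proposition~\ref{prop:assprop}, and the verification is routine rather than substantive.
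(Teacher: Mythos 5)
There is a genuine gap, and it sits exactly where you declare the work ``routine.'' Your second step claims that the two multiplications $\mathfrak{m},\mathfrak{m}'$ obtained by preserving different outgoing half-edges have the same central structured set with augmentations differing by a $\G_{n+k-1}$-action, so that post-composing with $\mathfrak{b}_1$ gives two $(n+k-1)$-traces related by the prescription of Lemma~\ref{lem:invhomo}, whence ``equality of their images under $X$.'' That is not what happens: the leg of $\mathfrak{m}$ carrying the original outgoing augmentation of the preserved half-edge is, in $\mathfrak{m}'$, replaced by the free leg of a cap $\mathfrak{b}_2$, whose incoming augmentation is determined by the trace, not by an automorphism of a common central vertex. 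Lemma~\ref{lem:invhomo} compares two representatives of \emph{one and the same} trace related by an automorphism of its vertex; here the two capped graphs are genuinely different, and already in the baseline case $\Delta\G=\Lambda$ (a corolla with one input and two outputs) the two total traces are $\beta_2(x,y)$ and $\beta_2(y,x)$, which differ unless $\beta_2$ is symmetric. So the images under $X$ agree only after the correct permutation and $\G_0$-twist of the inputs, and the entire content of the lemma is to verify that this specific twist is exactly cancelled when you reinsert the $\rho$'s and match outputs back to the half-edges of $\mathfrak{s}$. Your final paragraph defers precisely that verification, so the argument as written is incomplete (and its intermediate assertion, read literally, is false).

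By contrast, the paper never relates $\mathfrak{m}$ and $\mathfrak{m}'$ through a group element at all: it records the graph-level identity $\mathfrak{m}=\mathfrak{b}_2\circ\mathfrak{n}\circ\mathfrak{p}_2$ (composed as in the figure accompanying the proof), applies the already-fixed compatible assignment on $\Gen$ and on $\mathfrak{p}_2$ to get $Z(\mathfrak{m})=\beta_2\circ Z(\mathfrak{n})\circ\rho$, and then cancels the $\beta_2$--$\rho$ snake of Figure~\ref{fig:trfig}. If you want to salvage your ``pair all outputs against $\beta_2$ and use non-degeneracy'' strategy, the cleaner route is to observe that both putative values, after capping every output with $\mathfrak{b}_2$, compute $X$ of the single well-defined $(n+k)$-trace obtained by capping all outgoing half-edges of $\mathfrak{s}$ (well-defined by Lemma~\ref{lem:cangen}), with the snake identity absorbing the $\rho$'s; but that reduction must be proved via the same graph-level cancellation the paper uses, not by appealing to $\eta_{n+k-1}$-invariance alone. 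Your first step (independence of the chosen $\mathfrak{b}_2,\mathfrak{p}_2$ pair, via $P_{\Delta\G}(1)\cong\G_0$ and cancellation of $g$ with $g^{-1}$) is fine and matches the remark the paper makes just before the lemma.
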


\begin{proof}
The only choice left to consider is which outgoing half-edges we compose with copies of $\mathfrak{b}_2$ to get an element of $P_{\Delta\G}$. Suppose that  We have two elements $\mathfrak{m}$ and $\mathfrak{n}$ obtained by this proceedure from $\mathfrak{k}$, omitting different outgoing half-edges. As in the diagram:
\begin{center}
\includegraphics[scale=1]{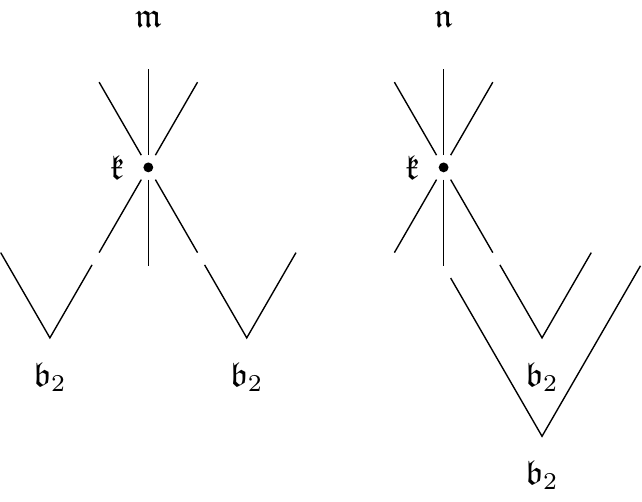}
\end{center}
 Then we have that 
\begin{equation*}
\mathfrak{m}=\mathfrak{b}_2\circ\mathfrak{n}\circ\mathfrak{p}
\end{equation*}
composed as in the diagram:
\begin{center}
\includegraphics[scale=1]{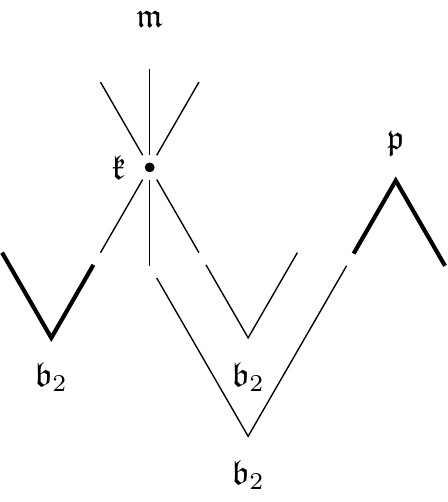}
\end{center}
However, since we already have a compatible assignment on $\Gen$ and $\mathfrak{p}$, we know that 
\begin{equation*}
Z(\mathfrak{m})=\beta_2\circ Z(\mathfrak{n})\circ \rho
\end{equation*}
And therefore, both choices yield the same value for $Z(\mathfrak{k})$.
\end{proof}

A comparable lemma for roses is unnecessary, since the only choice being made   is the identification of the structured set $M$ from lemma 6.3 with $[1]$. This choice, as mentioned above, amounts to inserting a group element and it's inverse into the computation, and so does not affect the value assigned to a given rose.

Before we continue, we will fix some notation to try and ease the writing. For two augmented $\G$-structured graphs $\mathfrak{u}_1$ and $\mathfrak{u}_2$ with a linear order of the incoming/outgoing half-edges we define the symbol 
\begin{equation*}
\mathfrak{u}_1 \circ_{m,n} \mathfrak{u}_2
\end{equation*}
to be the augmented $\G$-structured graph defined by concatenating the $m^{th}$ incoming half-edge of $\mathfrak{u}_1$ with $n^{th}$ outgoing half-edge of $\mathfrak{u}_2$ 

In addition, we will denote by 
\begin{equation*}
\left(\mathfrak{u}_1 \circ_{m,n} \mathfrak{u}_2 \right)\vert_{m,n}
\end{equation*}
the contraction of this concatenation along the newly created interior edge.

We will also use this notation for morphisms in $\Vect_k$, eg, for $\beta:A^{\otimes n} \to A^{\otimes m}$ and $\gamma:A^{\otimes k}\to A^{\otimes \ell}$ we will write 
\begin{equation*}
\beta \circ_{p,q} \gamma
\end{equation*}
to mean the composition of the $p$th input of $\beta$ with the $q$th output of $\gamma$. It is worth noting here that we are implicitly inserting copies of the identity at the other inputs/outputs (and we are also doing so for morphisms in $\Cor$) but we suppress this for ease of writing.

Now that we have a consistent definition of $Z$ on every morphism in $\Cor$, we need to check that this definition is functorial.

\begin{lem}[Functoriality for Single Compositions]\label{lem:corollacontraction}
Let $Z$ be defined as above. Given 2 corollas $\mathfrak{u}_1$ and $\mathfrak{u}_2$, then
\begin{eqnarray*}
Z(\mathfrak{u}_1) \circ_{m,n} Z(\mathfrak{u}_2)
 & = & Z(\left(\mathfrak{u}_1 \circ_{m,n} \mathfrak{u}_2\right)\vert_{m,n})
\end{eqnarray*}
\end{lem}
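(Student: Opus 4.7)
The plan is to reduce this to the already-established functoriality of $X$ on $\Gen$ (and in particular on the operad $P_{\Delta\G}$) together with the defining identity $\beta_2 \circ \rho = \mathrm{id}$, by making compatible choices in the defining algorithm for $Z$ on both sides.

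First, I would fix a convenient pair of decompositions. Let $I_i$ denote the incoming and $O_i$ the outgoing half-edges of $\mathfrak{u}_i$. For $\mathfrak{u}_2$, I apply the algorithm choosing the $n$-th outgoing half-edge as the distinguished output: attaching $\mathfrak{b}_2$ at every output in $O_2 \setminus \{n\}$ converts $\mathfrak{u}_2$ into some $\mathfrak{m}_2 \in P_{\Delta\G}$, and then $\mathfrak{u}_2$ is recovered by precomposing $\mathfrak{p}_2$'s at the corresponding input slots. For $\mathfrak{u}_1$ I choose any output $o_1^* \in O_1$ as distinguished, obtaining $\mathfrak{m}_1 \in P_{\Delta\G}$ together with $\mathfrak{p}_2$'s attached at the slots corresponding to $O_1 \setminus \{o_1^*\}$. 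By the previous lemma (independence of the choice in the algorithm), $Z(\mathfrak{u}_i)$ may be computed from these particular decompositions.

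Next, I would analyze the contracted corolla $\mathfrak{v} := (\mathfrak{u}_1 \circ_{m,n} \mathfrak{u}_2)\vert_{m,n}$, whose inputs are $(I_1 \setminus \{m\}) \sqcup I_2$ and whose outputs are $O_1 \sqcup (O_2 \setminus \{n\})$. Applying the algorithm to $\mathfrak{v}$ using the same distinguished output $o_1^*$, the resulting element $\mathfrak{m}_\mathfrak{v} \in P_{\Delta\G}$ has $\mathfrak{b}_2$'s attached at exactly the same output slots as those used for $\mathfrak{u}_1$ and $\mathfrak{u}_2$ individually. The key claim is that, as an element of the operad $P_{\Delta\G}$,
\begin{equation*}
\mathfrak{m}_\mathfrak{v} = \mathfrak{m}_1 \circ_m \mathfrak{m}_2,
\end{equation*}
i.e.\ $\mathfrak{m}_\mathfrak{v}$ is the operadic substitution of $\mathfrak{m}_2$ into the $m$-th input slot of $\mathfrak{m}_1$. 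This is the crux of the argument: it follows from the lemma on pullbacks in a crossed simplicial group, which is precisely what defines edge contraction of $\Delta\G$-structured graphs, matching the operadic composition of $P_{\Delta\G}$ under the identification of Proposition~\ref{prop:monoid} and the definition of $P_{\Delta\G}$ as $\pi_0$ of contraction equivalence classes. Similarly, the $\mathfrak{p}_2$'s attached to produce $\mathfrak{v}$ from $\mathfrak{m}_\mathfrak{v}$ are exactly the disjoint union of those used to produce $\mathfrak{u}_1$ from $\mathfrak{m}_1$ and $\mathfrak{u}_2$ from $\mathfrak{m}_2$, since all of these are attached at output slots, none of which are involved in the contraction.

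Given these two compatibilities, the lemma is immediate: since $X$ is a symmetric monoidal functor on $\Gen$ which contains $P_{\Delta\G}$, we have
\begin{equation*}
X(\mathfrak{m}_\mathfrak{v}) = X(\mathfrak{m}_1) \circ_m X(\mathfrak{m}_2),
\end{equation*}
and then composing with the appropriate $\rho$'s at the disjoint sets of output slots yields
\begin{equation*}
Z(\mathfrak{v}) = Z(\mathfrak{u}_1) \circ_{m,n} Z(\mathfrak{u}_2)
\end{equation*}
by the symmetric monoidal structure on $\Vect_k$. The main obstacle is the identification $\mathfrak{m}_\mathfrak{v} = \mathfrak{m}_1 \circ_m \mathfrak{m}_2$ in $P_{\Delta\G}$: one must verify that the $\Delta\G$-structured set at the central vertex of $\mathfrak{m}_\mathfrak{v}$, obtained by pulling back along the glued edge, coincides with the result of the operadic substitution — essentially tautological from the definitions, but requiring careful bookkeeping of the augmentation maps and the canonical factorization of the pullback diagram.
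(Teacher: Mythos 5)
Your proposal is correct and follows essentially the same route as the paper: compute $Z(\mathfrak{u}_1)$, $Z(\mathfrak{u}_2)$ and $Z\bigl((\mathfrak{u}_1\circ_{m,n}\mathfrak{u}_2)\vert_{m,n}\bigr)$ via the algorithm with compatible choices of distinguished outputs, reduce everything to the claim that the $P_{\Delta\G}$-element extracted from the contracted corolla is the operadic composite $\mathfrak{m}_1\circ_m\mathfrak{m}_2$, and then conclude from the operad map $P_{\Delta\G}\to\operatorname{End}(A)$ together with $\beta_2\circ\rho=\operatorname{id}$ and monoidality. Be aware, however, that the step you call ``essentially tautological'' is exactly what the paper's entire proof is devoted to: it compares the two orders of contraction of the common concatenated graph by a limit-diagram argument, and the delicate point is that the natural cone exhibiting this comparison is \emph{not} itself a contraction because it fails to respect the augmentations on the external half-edges, so it must be corrected (by pulling back along the identities on the attached $\mathfrak{b}_2$'s and $[1]$'s) before it yields a genuine equivalence of augmented $\Delta\G$-structured graphs.
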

\begin{proof}
Suppose we have two corollas $\mathfrak{u}_1$ and $\mathfrak{u}_2$. 
Compose them as follows:
\begin{center}
 \includegraphics[scale=1]{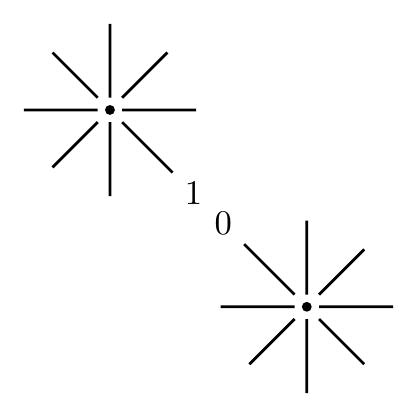}
\end{center}
and call their contraction $\mathfrak{f}$.

Let $\mathfrak{u}_i^\prime$ be the corolla given by composing $\mathfrak{u}_i$ with multiple copies of $\mathfrak{b}_2$  (doing this in 
such a way that the half-edges along which the $\mathfrak{u}_i$ are composed is left unchanged).  We then get limit diagrams:

\begin{equation*}
 \xymatrix @R=0.1pc{
  &
  &
  \mathfrak{u}_i^\prime \ar@[red][dddddr]\ar@[red][ddddrr]\ar@[red][dddddddr]\ar@[red][ddddddddrr]\ar@[red][dddddd]
  \ar@[red][dddddl]\ar@[red][ddddll]\ar@[red][dddddddl]\ar@[red][ddddddddll]&
  &
  \\
  & & & & \\
  & & & & \\
  & & & & \\
  \mathfrak{b}_2\ar[dr] & & & & \mathfrak{b}_2\ar[dl]\\
   & [1] & & [1] & \\ 
   \vdots & & \mathfrak{u}_i\ar[ur]\ar[ul]\ar[dr]\ar[dl] & & \vdots\\
     & [1] & & [1] & \\ 
     \mathfrak{b}_2\ar[ur] & & & & \mathfrak{b}_2\ar[ul]\\
 }
\end{equation*}

Writing the contraction of $\mathfrak{u}_1^\prime$ and $\mathfrak{u}_2^\prime$ as $\mathfrak{k}$, we also get limit diagrams:

\begin{equation*}
 \xymatrix{
 & 
 \mathfrak{k}\ar[d]\ar[dr]\ar[dl] & 
 \\
 \mathfrak{u}_1^\prime\ar[r] &
 [1] &
 \mathfrak{u}_2^\prime \ar[l]\\
 }
\end{equation*}
and
\begin{equation*}
 \xymatrix{
 & 
 \mathfrak{f}\ar[d]\ar[dr]\ar[dl] & 
 \\
 \mathfrak{u}_1\ar[r] &
 [1] &
 \mathfrak{u}_2 \ar[l]\\
 }
\end{equation*}

But, composing the morphisms in the limit diagram of $\mathfrak{k}$ with the morphisms $\mathfrak{u}_i^\prime\to\mathfrak{u}_i$, we can 
display $\mathfrak{k}$ as a cone over  $\mathfrak{u}_1 \to[1] \leftarrow \mathfrak{u}_2$. This means that, since $\mathfrak{f}$ is a limit, we get a 
unique map $\mathfrak{k}\to\mathfrak{f}$ such that TFDC:
\begin{equation*}
 \xymatrix{
  & 
  \mathfrak{k}\ar[d] \ar@[blue]@/^/[dd]\ar@[blue][ddr]\ar@[blue][ddl]&
  \\
  &
  \mathfrak{f}\ar[d]\ar[dr]\ar[dl] &
  \\
  \mathfrak{u}_1\ar[r] & 
  [1] &
  \mathfrak{u}_2\ar[l]
 }
\end{equation*}

Note that the cone in question is \textit{not} a contraction, since it does not respect the augmentation on the external 
half-edges. However, since the diagram above commutes, we can extend it to 
\begin{equation*}
 \xymatrix @R=0.1pc{
  &
  &
  &
  \mathfrak{k} \ar@[red][dddddrr]\ar@[red][ddddrrr]\ar@[red][dddddddrr]\ar@[red][ddddddddrrr]\ar@[red][dddddd]\ar@[red][ddddddr]\ar@[red][ddddddl]
  \ar@[red][dddddll]\ar@[red][ddddlll]\ar@[red][dddddddll]\ar@[red][ddddddddlll]&
 &
 &
  \\
  & & & & & & \\
  & & & & & & \\
  & & & & & & \\
  \mathfrak{b}_2\ar[dr] & & & & & & \mathfrak{b}_2\ar[dl]\\
   & [1] & & & & [1] & \\ 
   \vdots & & \mathfrak{u}_1\ar[ul]\ar[dl]\ar[r] & [1]&\mathfrak{u}_2\ar[ur]\ar[dr]\ar[l] & & \vdots\\
     & [1] & & & & [1] & \\ 
     \mathfrak{b}_2\ar[ur] & & & & & & \mathfrak{b}_2\ar[ul]\\
 }
\end{equation*}
by taking the compositions of $\mathfrak{k}\to \mathfrak{u}_i^\prime$ with $\mathfrak{u}_i^\prime\to \mathfrak{b}_2$ and $\mathfrak{u}_i^\prime \to [1]$ respectively.

However, we can also extend the limit diagram of $\mathfrak{f}$ to:
\begin{equation*}
 \xymatrix @R=0.1pc{
 \beta_2\ar[dr]\ar@[blue]@/_/[dddddddd]  & & & & & & \beta_2\ar[dl] \ar@[blue]@/^/[dddddddd] \\
   & [1]\ar@[blue]@/_/[dddddddd]  & & & & [1]\ar@[blue]@/^/[dddddddd]  & \\ 
   \vdots & &  & \mathfrak{f} \ar[ull]\ar[urr] \ar[dll]\ar[drr]\ar@[blue][dddddddd] \ar@[blue][ddddddddr] \ar@[blue][ddddddddl]  & & & \vdots\\
     & [1] \ar@[blue][dddddddd] & & & & [1]\ar@[blue][dddddddd]  & \\ 
     \mathfrak{b}_2\ar[ur]\ar@[blue][dddddddd]  & & & & & & \mathfrak{b}_2\ar[ul]\ar@[blue][dddddddd] \\
  & & & & & & \\
  & & & & & & \\
  & & & & & & \\
  \beta_2\ar[dr] & & & & & & \beta_2\ar[dl]\\
   & [1] & & & & [1] & \\ 
   \vdots & & \mathfrak{u}_1\ar[ul]\ar[dl]\ar[r] & [1]&\mathfrak{u}_2\ar[ur]\ar[dr]\ar[l] & & \vdots\\
     & [1] & & & & [1] & \\ 
     \mathfrak{b}_2\ar[ur] & & & & & & \mathfrak{b}_2\ar[ul]\\
 }
\end{equation*}
Here, the maps $\mathfrak{b}_2\to\mathfrak{b}_2$ and $[1]\to[1]$ are identities (we can extend in this way because the augmentation maps for $\mathfrak{f}$ 
are defined by those for $\mathfrak{u}_1$ and $\mathfrak{u}_2$). 

Since this is the case we can pull back the red arrows from the previous diagram along these identities to get a diagram

\begin{equation*}
  \xymatrix @R=0.1pc{
  &
  &
 \mathfrak{k} \ar@[violet][dddddr]\ar@[violet][ddddrr]\ar@[violet][dddddddr]\ar@[violet][ddddddddrr]\ar@[violet][dddddd]
  \ar@[violet][dddddl]\ar@[violet][ddddll]\ar@[violet][dddddddl]\ar@[violet][ddddddddll]&
  &
  \\
  & & & & \\
  & & & & \\
  & & & & \\
  \mathfrak{b}_2\ar[dr] & & & & \mathfrak{b}_2\ar[dl]\\
   & [1] & & [1] & \\ 
   \vdots & & \mathfrak{f} \ar[ur]\ar[ul]\ar[dr]\ar[dl] & & \vdots\\
     & [1] & & [1] & \\ 
     \mathfrak{b}_2\ar[ur] & & & & \mathfrak{b}_2\ar[ul]\\
 }
\end{equation*}

Displaying $\mathfrak{k}$ as a cone over the concatenation of $\mathfrak{f}$ with copies of $\mathfrak{b}$. Since the augmentation maps agree now,
we get that this is an equivalence of augmented structured graphs. That is, $\mathfrak{k}$ is precisely the contraction of 
the concatenation of $\mathfrak{f}$ with copies of $\mathfrak{b}_2$.
\end{proof}

\begin{rem}
A similar argument shows functorality for a pair of roses composed at a single half-edge.
\end{rem}

\begin{lem}[Functoriality for Multiple Compositions]\label{lem:rosecontraction}
Let $Z$ be defined as above. Given 2 corollas $\mathfrak{u}_1$ and $\mathfrak{u}_2$, then
\begin{eqnarray*}
Z(\mathfrak{u}_1) \circ_{m,n}\circ_{k,\ell} Z(\mathfrak{u}_2)
 & = & Z\left(\left(\mathfrak{u}_1 \circ_{m,n}\circ_{k,\ell} \mathfrak{u}_2\right)\vert_{m,n}\right) \\
 & = & Z\left(\left(\mathfrak{u}_1 \circ_{m,n}\circ_{k,\ell} \mathfrak{u}_2\right)\vert_{k,l}\right)
\end{eqnarray*}
\end{lem}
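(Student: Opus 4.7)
The plan is to bootstrap from Lemma \ref{lem:corollacontraction} by performing the two contractions sequentially. First I would apply Lemma \ref{lem:corollacontraction} to the single concatenation along $(m,n)$, obtaining a corolla $\mathfrak{f} := (\mathfrak{u}_1 \circ_{m,n} \mathfrak{u}_2)|_{m,n}$ with $Z(\mathfrak{f}) = Z(\mathfrak{u}_1) \circ_{m,n} Z(\mathfrak{u}_2)$. The $k$-th incoming half-edge of $\mathfrak{u}_1$ and the $\ell$-th outgoing half-edge of $\mathfrak{u}_2$ descend to free half-edges of $\mathfrak{f}$ now incident to the same vertex, and identifying them creates the rose
\[
\mathfrak{r} := \bigl(\mathfrak{u}_1 \circ_{m,n} \circ_{k,\ell} \mathfrak{u}_2\bigr)\big|_{m,n}
\]
carrying a single self-loop.

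Next I would invoke Lemma \ref{rosestruct} to write $\mathfrak{r}$ as the concatenation of $\mathfrak{f}$ with the ``loop closure'' $M \overset{id}{\leftarrow} M \overset{id}{\rightarrow} M$, after an identification of the $2$-element structured set $M$ at the waist of the loop with $[1]$. The algorithm laid out at the start of this section then evaluates $Z$ on this closure via $\beta_2$ paired through $\rho$ at the two attaching half-edges; this pairing realises precisely the operadic contraction $\circ_{k,\ell}$ against a copy of $A$. Combining with the corolla step yields $Z(\mathfrak{r}) = Z(\mathfrak{u}_1) \circ_{m,n} \circ_{k,\ell} Z(\mathfrak{u}_2)$. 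Functoriality for this second attachment, which is itself a single half-edge concatenation between a corolla and the closure piece, is guaranteed by the remark immediately following Lemma \ref{lem:corollacontraction}.

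For the independence statement, I would observe that $(\mathfrak{u}_1 \circ_{m,n} \circ_{k,\ell} \mathfrak{u}_2)|_{m,n}$ and $(\mathfrak{u}_1 \circ_{m,n} \circ_{k,\ell} \mathfrak{u}_2)|_{k,\ell}$ are each obtained by contracting a single interior edge from the common ``theta'' $\Delta\G$-structured graph $\mathfrak{u}_1 \circ_{m,n} \circ_{k,\ell} \mathfrak{u}_2$, and therefore lie in the same equivalence class in $\Cor$. Since $Z$ is defined on equivalence classes, the two values must coincide.

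The main obstacle I anticipate lies in tracking the $\Delta\G$-structure and the augmentations through the rose decomposition of Lemma \ref{rosestruct}. The identification of the structured set $M$ at the centre of the loop with $[1]$ is only determined up to an element $g \in \G_0$, and one must verify that the cancelling $g$ and $g^{-1}$ introduced at the two sides of the closure are absorbed by $\beta_2$, exactly as in the discussion preceding Lemma \ref{lem:corollacontraction} that different choices of 2-(co)trace give the same assignment. Once this bookkeeping is verified, the identification $Z(\mathfrak{r}) = Z(\mathfrak{u}_1) \circ_{m,n} \circ_{k,\ell} Z(\mathfrak{u}_2)$ is unambiguous and the lemma follows.
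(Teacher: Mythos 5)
Your argument follows the paper's proof essentially verbatim: contract one of the two new edges using Lemma \ref{lem:corollacontraction}, then evaluate the resulting one-loop rose via Lemma \ref{rosestruct}, with the inserted $\mathfrak{b}_2$ and $\mathfrak{p}_2$ cancelling because $\beta_2\circ\rho$ is the identity, so that the value equals $Z(\mathfrak{u}_1)\circ_{m,n}\circ_{k,\ell}Z(\mathfrak{u}_2)$. The only point to adjust is your justification of the second equality: appealing to ``$Z$ is defined on equivalence classes'' is close to circular, since independence of the order of contraction is part of what this lemma is meant to secure; it is cleaner to observe that your computation is symmetric in the two attaching edges, so contracting along $(k,\ell)$ first also yields the manifestly symmetric right-hand side, whence the two values agree.
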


\begin{proof}
We have a diagram of the form
\begin{equation*}
\xymatrix{
 & \mathfrak{u}_2\ar[dr]\ar[dl] & \\
 [1] & & [1]\\
 & \mathfrak{u}_1\ar[ur]\ar[ul] & 
}
\end{equation*}
Which, contracted along the first new edge, gives us a new diagram of the form
\begin{equation*}
\xymatrix{
\mathfrak{r}\ar@/^/[r]\ar@/_/[r] & M
}
\end{equation*}
To compute the value of $Z$ on this corolla (ie, to compute $Z\left(\left(\mathfrak{u}_1 \circ_{m,n}\circ_{k,\ell} \mathfrak{u}_2\right)\vert_{m,n}\right)$), we  expand this to a diagram
\begin{equation*}
\xymatrix{
 & M & \\
\mathfrak{r}\ar[ur]\ar[dr]  & & M \ar[ul]\ar[dl]\\
 & M &
}
\end{equation*}
We then identify (WLOG) $M\leftarrow M\to M$ with $\mathfrak{p}_2$, and compute the value of $Z$ on $\mathfrak{r}$. However, by the previous lemma, $Z(\mathfrak{r})$ can be computed by computing the value on the diagram 
\begin{equation*}
\xymatrix{
  \mathfrak{u}_2\ar[dr]\ar[d] & & & \\
 [1] & [1] & & [1] \\
 &  & \mathfrak{b}_2\ar[ur]\ar[ul] &\\
  \mathfrak{u}_1\ar[uu]\ar[rrr] & & & [1]
}
\end{equation*}
This tells us that $Z\left(\left(\mathfrak{u}_1 \circ_{m,n}\circ_{k,\ell} \mathfrak{u}_2\right)\vert_{m,n}\right)$ can be computed by apply $Z$ to  pieces of 
\begin{equation*}
\xymatrix{
  \mathfrak{u}_2\ar[dr]\ar[d] & & & \mathfrak{p}_2\ar[d]\ar@/^2pc/[ddd] \\
 [1] & [1] & & [1] \\
 &  & \mathfrak{b}_2\ar[ur]\ar[ul] &\\
  \mathfrak{u}_1\ar[uu]\ar[rrr] & & & [1]
}
\end{equation*}
However, we know that $\beta_2=Z(\mathfrak{b}_2)$ and $\rho=Z(\mathfrak{p}_2)$ compose to the identity, so apply $Z$ to this diagram also computes 
$Z(\mathfrak{u}_1) \circ_{m,n}\circ_{k,\ell} Z(\mathfrak{u}_2)$, proving the lemma. 
\end{proof}

\begin{rem}
The same proof works for pairs of roses composed along more than two half-edges. In this case, we simply get several copies of $\mathfrak{b}_2$ and $\mathfrak{p}_2$.
\end{rem}

What we have now shown is the following:
\begin{prop}\label{prop:CSFTclass}
For a balanced crossed simplicial group $\Delta\G$, given a $\Delta\G$-frobenius algebra $A$ over $k$, there is a CSFT $Z$ with $Z(\scri)=A$.
\end{prop}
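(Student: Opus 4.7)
The plan is to tie together the three pieces already in place: the functor $X:\Gen\to \Vect_k$ from Lemma~\ref{lem:cangen}, the structural result that every morphism of $\Cor$ can be written in terms of $\Gen$-morphisms and copies of the 2-cotrace $\mathfrak{p}_2$, and the two functoriality lemmas (\ref{lem:corollacontraction} and~\ref{lem:rosecontraction}) together with their remarks. Since $\Cor$ is freely generated as a symmetric monoidal category by morphisms between $\scri^{\otimes n}$ and $\scri^{\otimes m}$ modulo equivalence of structured graphs, it is enough to (i) assign a linear map to every morphism, (ii) check the assignment is independent of the chosen decomposition, and (iii) check it respects composition.

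First I would set $Z(\scri^{\otimes n}):=A^{\otimes n}$, $Z(\mathfrak{b}_1):=\beta_1$, and define $\rho:=Z(\mathfrak{p}_2)$ to be the unique linear map on $A^{\otimes 2}$ such that $\beta_2\circ\rho=\mathrm{id}$, using non-degeneracy of the trace $\beta_2$ guaranteed by the Frobenius structure. On a morphism $\mathfrak{k}\in\Cor$ I would apply the algorithm stated just after Lemma~\ref{rosestruct}: first apply Lemma~\ref{rosestruct} (iterated if necessary) to write $\mathfrak{k}$ as a concatenation of a corolla $\mathfrak{s}$ with some copies of $\mathfrak{b}_2$ on loop edges; next write $\mathfrak{s}=\mathfrak{m}\circ(\mathfrak{p}_2\sqcup\cdots\sqcup\mathfrak{p}_2\sqcup\mathrm{id})$ for some $\mathfrak{m}\in P_{\Delta\G}$; finally declare $Z(\mathfrak{k})$ to be the corresponding composition of $X(\mathfrak{m})$, $\rho$, and $\beta_2$.

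Second, I would verify that this is independent of all choices made in the algorithm. Different choices of $\mathfrak{b}_2$ or $\mathfrak{p}_2$ differ by an element of $P_{\Delta\G}(1)\cong \G_0$ and its inverse, which cancel. The choice of which outgoing half-edges are absorbed to turn $\mathfrak{s}$ into an element of $P_{\Delta\G}$ is handled exactly as in the single-corolla lemma stated (and proved) in the preceding section of the excerpt. The choice of identifying the contracted loop edges in a rose with $[1]$ amounts to conjugation by elements of $\G_0$, which again cancels because $\mathfrak{b}_2$ is built from the $\eta_n$-invariant trace $\beta_2$ and $\rho$ is its dual.

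Third, I would verify functoriality. Given $\mathfrak{k}_1$ and $\mathfrak{k}_2$ composable in $\Cor$, their composition is obtained by concatenating along matched in/out half-edges and then contracting. If the composition is a single edge, Lemma~\ref{lem:corollacontraction} (and the remark following it, covering roses) gives the required equality $Z(\mathfrak{k}_1)\circ Z(\mathfrak{k}_2)=Z(\mathfrak{k}_1\circ\mathfrak{k}_2)$. If multiple edges are matched, Lemma~\ref{lem:rosecontraction} and its remark handle the case of a pair of corollas/roses glued along arbitrarily many half-edges. The general case reduces to these by first decomposing each $\mathfrak{k}_i$ via the algorithm and applying the single- and multiple-edge lemmas to each elementary gluing. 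Symmetric monoidality is then immediate because the decompositions and the target $\Vect_k$ both respect disjoint union/tensor product, and permutation morphisms in $\Cor$ go to the canonical permutation isomorphisms in $\Vect_k$ by construction.

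The main obstacle I anticipate is purely bookkeeping rather than conceptual: ensuring that when a composite morphism $\mathfrak{k}_1\circ\mathfrak{k}_2$ is decomposed directly via the algorithm, the resulting expression really matches the composition of the separately computed $Z(\mathfrak{k}_1)$ and $Z(\mathfrak{k}_2)$ after all the $\rho$--$\beta_2$ cancellations and all $\G_0$-action-induced conjugations have been tracked through. This is exactly what Lemmas~\ref{lem:corollacontraction} and~\ref{lem:rosecontraction} verify in the atomic cases, so the remaining work is to induct on the number of edges being contracted and on the number of vertices in the graphs, reducing each step to an atomic contraction.
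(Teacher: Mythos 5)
Your proposal follows the paper's own route essentially verbatim: it assembles the functor from Lemma~\ref{lem:cangen} on $\Gen$, defines $\rho$ via non-degeneracy, applies the corolla/rose decomposition algorithm, checks independence of choices by the $\G_0$-cancellation and outgoing-half-edge arguments, and establishes functoriality through Lemmas~\ref{lem:corollacontraction} and~\ref{lem:rosecontraction} with their remarks. The only addition, an explicit induction on the number of vertices and contracted edges in the general gluing step, is exactly the bookkeeping the paper leaves implicit, so the argument is correct and matches the paper's proof.
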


As a corollary, we get our main result:

\begin{thm}\label{thm:CSFTfin}
For a balanced crossed simplicial group $\Delta\G$, there is an equivalence of categories
\begin{equation*}
\CSFT_k\cong \Gfrob_k
\end{equation*}
\end{thm}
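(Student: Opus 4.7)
The plan is to assemble Proposition \ref{prop:CSFTclass} with the analysis of Section 5 into a pair of mutually quasi-inverse functors. Section 5 produces a functor
\[
\Phi: \CSFT_k \longrightarrow \Gfrob_k, \qquad Z \mapsto (A := Z(\scri),\ \mu_i := Z(\mathfrak{m}_i),\ \beta_1 := Z(\mathfrak{b}_1)),
\]
since Proposition \ref{prop:assprop} together with the $\eta_n$-invariance of the traces (which follows tautologically from the fact that different trace representatives map to the same morphism in $\Cor$) and the non-degeneracy of $\beta_2$ (Figure \ref{fig:trfig}) show that the extracted data is a $\Delta\G$-Frobenius algebra. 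On morphisms, a monoidal natural transformation $\alpha : Z \Rightarrow Z'$ yields a linear map $\alpha_\scri : A \to A'$ which intertwines $\mu_i, \beta_1$ by naturality on the relevant morphisms in $\Gen \subset \Cor$.

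Proposition \ref{prop:CSFTclass} and the preceding construction give a functor in the other direction
\[
\Psi: \Gfrob_k \longrightarrow \CSFT_k, \qquad A \mapsto Z_A,
\]
where $Z_A$ is built by first defining $X : \Gen \to \Vect_k$ via Lemma \ref{lem:cangen} and then extending to $\Cor$ using the algorithm (decompose a corolla via 2-cotraces, decompose a rose via Lemma \ref{rosestruct}). Lemmas \ref{lem:corollacontraction} and \ref{lem:rosecontraction} ensure this is a well-defined symmetric monoidal functor. On morphisms, a $\Delta\G$-Frobenius algebra morphism $h : A \to B$ gives, on each generator in $\Gen$, a component $h^{\otimes n}$, which assembles into a monoidal natural transformation because the extension of $Z$ to $\Cor$ is built functorially out of the generator data.

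It remains to exhibit natural isomorphisms $\Phi\Psi \cong \mathrm{id}$ and $\Psi\Phi \cong \mathrm{id}$. The composite $\Phi\Psi$ is the identity on the nose: by construction $Z_A(\scri) = A$, $Z_A(\mathfrak{m}_i) = \mu_i$ and $Z_A(\mathfrak{b}_1) = \beta_1$, so $\Phi(Z_A)$ reproduces $A$ exactly. For $\Psi\Phi$, given a CSFT $Z$, the CSFT $Z_{\Phi(Z)}$ agrees with $Z$ on $\scri^{\otimes n}$ and on every morphism in $\Gen$ by Lemma \ref{lem:cangen}. To see agreement on an arbitrary morphism $\mathfrak{k}$ in $\Cor$, decompose $\mathfrak{k}$ via the algorithm into compositions of morphisms in $\Gen$ and 2-cotraces; the only subtlety is the 2-cotrace $\mathfrak{p}_2$, and here we use that $Z(\mathfrak{p}_2)$ is \emph{forced} to equal the two-sided inverse of $Z(\mathfrak{b}_2)$ (along the first input) since $Z$ is a functor and $\mathfrak{b}_2 \circ \mathfrak{p}_2 = \mathrm{id}$ in $\Cor$. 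Functoriality of $Z$ together with the single- and multiple-composition lemmas of Section 6 then show $Z(\mathfrak{k}) = Z_{\Phi(Z)}(\mathfrak{k})$, giving the required natural isomorphism (in fact equality).

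The main obstacle is the last verification: one must check that the 2-cotrace is truly determined by the rest of the data, which is where non-degeneracy of $\beta_2$ and the proof of Proposition \ref{prop:spangraph}-style uniqueness of the decomposition are essential. Once that is done, and combined with the observation that the two compositions are (natural) identities, both $\Phi$ and $\Psi$ are fully faithful and essentially surjective, yielding the claimed equivalence of categories.
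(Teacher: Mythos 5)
Your proposal is correct and follows essentially the same route as the paper: the theorem is obtained by combining the Section 5 analysis (extraction of a $\Delta\G$-Frobenius algebra from a CSFT, with the $\eta_n$-invariance and non-degeneracy coming from equivalence of representatives and the zig-zag with the 2-cotrace) with Proposition \ref{prop:CSFTclass} and Lemmas \ref{lem:cangen}, \ref{lem:corollacontraction}, \ref{lem:rosecontraction} for the reconstruction, exactly as the paper does when it states the theorem as a corollary. Your write-up merely makes explicit the functor-level and natural-transformation checks that the paper leaves implicit (the passing appeal to Proposition \ref{prop:spangraph} is unnecessary, since independence of choices is already covered by the Section 6 lemmas), so no genuine gap remains.
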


And, applying \ref{thm:cobequiv}, we get as a further corollary

\begin{cor}\label{cor:fincor}
For a planar lie group $G$ corresponding to a crossed simplicial group $\Delta\G$, there is an equivalence of categories
\begin{equation*}
\GOTFT_k\cong \Gfrob_k
\end{equation*}
\end{cor}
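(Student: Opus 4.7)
The plan is to combine the two main equivalences already established in the paper. By Lemma \ref{lem:ballem} and the discussion preceding it, every planar crossed simplicial group $\Delta\G$ is balanced, so Theorem \ref{thm:CSFTfin} applies and yields an equivalence
\begin{equation*}
\CSFT_k \cong \Gfrob_k.
\end{equation*}
It therefore suffices to produce an equivalence $\GOTFT_k \cong \CSFT_k$ and then compose.

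First I would verify that the equivalence $T:\Cob \overset{\simeq}{\to} \Cor$ constructed in Theorem \ref{thm:cobequiv} is in fact a symmetric monoidal equivalence. Both categories carry their obvious symmetric monoidal structures (disjoint union on the cobordism side, juxtaposition of copies of $\scri$ on the corolla side). On objects this is clear from the definition of $T$, and on morphisms it follows from the construction of $T$ via dual graphs of ideal triangulations: if $(\Sigma_1,P_1) \sqcup (\Sigma_2,P_2)$ is a disjoint union of stable marked surfaces, any pair of ideal triangulations glues to an ideal triangulation of the union, whose dual graph is literally the disjoint union of the two dual graphs. The non-stable cases (handled in the proof of \ref{thm:cobequiv} by an explicit list) are symmetric-monoidally compatible by inspection.

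Next I would apply the standard fact that a symmetric monoidal equivalence of symmetric monoidal categories $T:\mathcal{C} \overset{\simeq}{\to} \mathcal{D}$ induces an equivalence on the categories of symmetric monoidal functors to any fixed target: precomposition with $T$ gives an equivalence
\begin{equation*}
T^\ast : \text{Fun}^\otimes(\mathcal{D},\Vect_k) \overset{\simeq}{\to} \text{Fun}^\otimes(\mathcal{C},\Vect_k),
\end{equation*}
with quasi-inverse given by precomposition with any chosen quasi-inverse $T^{-1}$ and natural isomorphism data coming from the unit and counit of the equivalence. Applied to $T:\Cob \to \Cor$ this yields $\CSFT_k \cong \GOTFT_k$.

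There is really no main obstacle here: the work has been done in Theorems \ref{thm:cobequiv} and \ref{thm:CSFTfin}, and the only genuine content to check is the symmetric monoidal upgrade of $T$, which is essentially built into the construction. Once this is in place the corollary follows by composing the two equivalences:
\begin{equation*}
\GOTFT_k \;\cong\; \CSFT_k \;\cong\; \Gfrob_k.
\end{equation*}
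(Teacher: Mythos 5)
Your proposal is correct and follows essentially the same route as the paper: the corollary is obtained by composing the equivalence $\GOTFT_k\cong\CSFT_k$ induced by Theorem \ref{thm:cobequiv} with the equivalence $\CSFT_k\cong\Gfrob_k$ of Theorem \ref{thm:CSFTfin}, and your extra care about the symmetric monoidal structure on $T$ only makes explicit what the paper treats as the ``obvious'' monoidal structure. One small citation slip: the fact that every planar crossed simplicial group is balanced is the unlabeled lemma at the start of Section 3.2, not Lemma \ref{lem:ballem} (which asserts the converse-direction structural statement about balanced groups).
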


Making reference to the examples \ref{ex:first} and \ref{ex:dih}, we retrieve the following results:
\begin{exmps}
Oriented open topological field theories over $k$ are equivalent to frobenius algebras over $k$. (folklore)

Unoriented open topological field theories over $k$ are equivalent to frobenius algebras over $k$ equipped with a trace-preserving anti-automorphism of order 2. (See\cite{B})
\end{exmps}

\subsection{Equivariant TFT's}

Not only does the framework provided by balanced crossed simplicial groups provide a classification of field theories on $\Cob$ for a planar Lie group $G$, it also provides a classification of so-called \emph{equivariant} field theories.  

Recall from example \ref{ex:BH} that, given a finite group $H$ and a planar crossed simplicial group $\Delta\G$ we can form a new balanced crossed simplicial group $\Delta\mathfrak{GH}=\Delta\G\times BH$. This new crossed simplicial group admits a forgetful functor
\begin{equation*}
F_H:\Delta\mathfrak{GH}\to\Delta\G
\end{equation*}
given by projection onto the first component. It is also immediate from the definitions that the functor
\begin{equation*}
\lambda_\mathfrak{GH}:\Delta\mathfrak{GH}\to \mathbf{FSet}
\end{equation*}
factors as:
\begin{equation*}
\xymatrix{
\Delta\mathfrak{GH}\ar[dd]_{\lambda_\mathfrak{GH}}\ar[dr]^{F_H} & \\
& \Delta\G \ar[dl]_{\lambda_\G}\\
\mathbf{FSet} & 
}
\end{equation*}

As a consequence, we have induced functors amongst the structured set categories:
\begin{equation*}
\xymatrix{
\cgh\ar[dd]_{\lambda_\cgh}\ar[dr]^{F_H} & \\
& \cG \ar[dl]_{\lambda_\cG}\\
\mathbf{FSet} & 
}
\end{equation*}

As a result of this factorization, we can interpret the notion of $\Delta\mathfrak{GH}$-structured graph in a more useful way. A $\Delta\GH$-structured graph is a lift of incidence diagrams.
\begin{equation*}
\xymatrix{
 & \cgh \ar[d]^{\lambda_\cgh} \\
 I(\Gamma)\ar[r]_{I_\Gamma}\ar[ur]^{\tilde{I}_\Gamma} & \mathbf{FSet}
}
\end{equation*}
which can then be viewed as a pair of lifts:
\begin{equation*}
\xymatrix{
 & \cgh \ar[d]^{F_H} \\
  & \cG\ar[d]^{\lambda_\cG} \\
 I(\Gamma)\ar[r]_{I_\Gamma}\ar[ur]_{\tilde{I}_\Gamma} \ar[uur]^{\hat{I}_\Gamma} & \mathbf{FSet}
}
\end{equation*}
That is, a $\Delta\GH$-structured graph is a $\Delta\G$-structured graph together with additional data relating to the group $H$.  
This additional data is, more precisely, for every edge or vertex, an $H$-torsor $\mathscr{O}_H$, together with maps of torsors corresponding to half-edges. Since $\lambda_\GH$ restricted to morphisms in $H$ is trivial, there are no other data involved in the lift. To make this graph augmented amounts to choosing a trivialization of the torsors associated to the external half-edges.

Now, suppose that $S$ is a $\mathbb{G}$-structured surface, for a planar lie group $G$ corresponding to a planar crossed simplicial group $\Delta\G$. Let $\Gamma$ be the augmented spanning graph associated to $S$ as in the proof of theorem \ref{thm:cobequiv}, with $\Delta\G$-structure $\tilde{I}_\Gamma$.

As it turns out, there is a connection between principal $H$-bungles on $S$ and $\Delta\GH$-structures on $\Gamma$. 

\begin{lem}\label{lem:principalbundleequiv}
Let $S$ be a morphism in $\Cob$, corresponding to an equivalence class $\mathfrak{m}$ of  $\Delta\G$ structured graphs via the equivalence of theorem \ref{thm:cobequiv}. Then a $\Delta\GH$-structure on $\mathfrak{m}$ is equivalent to an $H$-principal $E$ bundle on $S$. 
\end{lem}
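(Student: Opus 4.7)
The plan is to unpack both sides of the claimed equivalence down to the same combinatorial data. On the topological side, since $\Gamma$ is a spanning graph for $S$, the inclusion $|\Gamma|\hookrightarrow S$ is a homotopy equivalence (and restricts to a homotopy equivalence of boundaries with the collared intervals $\scri$), so the groupoid of principal $H$-bundles on $S$ is equivalent to the groupoid of principal $H$-bundles on $|\Gamma|$. Because $H$ is discrete, such a bundle $E$ is locally constant, hence determined by a functor from the incidence category $I(\Gamma)$ to the category of $H$-torsors that assigns a torsor $E_v$ (resp.\ $E_e$) to each vertex (resp.\ edge) and an isomorphism $E_v\to E_e$ to each half-edge incidence $v\to e$. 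The trivialization of $E$ along the embedded copies of $\scri$ on $\partial S$ (which is automatic, since $\scri$ is contractible and the structured embedding is part of the data of a morphism in $\Cob$) identifies the torsors at the external half-edges of $\Gamma$ with the trivial $H$-torsor.

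On the combinatorial side, as noted just before the lemma, the factorization $\lambda_\cgh = \lambda_\cG\circ F_H$ and the product decomposition $\Delta\GH=\Delta\G\times BH$ show that a $\Delta\GH$-structure refining $\tilde I_\Gamma$ is precisely a lift of $\tilde I_\Gamma$ along $F_H$; equivalently, since $BH$ contributes no further constraint on $\lambda_\cgh$, it is exactly the choice of an $H$-torsor for each object of $I(\Gamma)$ together with a torsor map for each morphism, and the augmentation is exactly a trivialization of the torsors sitting over the external half-edges.

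I would then define the equivalence in both directions: from $E\to S$, restrict to $|\Gamma|$ and take $E_v$, $E_e$, and parallel transport along half-edges as the torsor data, with the trivializations on $\scri\hookrightarrow\partial S$ furnishing the augmentation; conversely, given a $\Delta\GH$-lift $\hat I_\Gamma$, glue the torsor data into a locally constant $H$-bundle on $|\Gamma|$ by the standard descent for principal bundles on a $1$-complex, and extend it uniquely up to isomorphism along the homotopy equivalence $|\Gamma|\hookrightarrow S$, using the augmentation trivializations to fix the isomorphism class on the boundary intervals. That these two assignments are mutually quasi-inverse follows because both reconstruct the same monodromy data on $\pi_1(|\Gamma|)\cong\pi_1(S)$, and both are natural in $H$-equivariant bundle isomorphisms.

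The main obstacle will be showing that the equivalence is compatible with the equivalence relations on each side: on the topological side we must check independence of the chosen spanning graph, and on the combinatorial side we must check invariance under contractions of $\Delta\GH$-structured graphs. Since any two spanning graphs of $(\Sigma,P)$ are related by $2\text{-}2$ Pachner moves, it suffices to verify that such a move induces an isomorphism of the reconstructed bundles; this reduces to a local statement on a quadrilateral, where the two descriptions of the $H$-torsor on the contracted edge glue canonically because the torsor data is locally constant. This is the same strategy as in the proof of Theorem \ref{thm:cobequiv}, with the auxiliary $H$-torsor data riding along trivially throughout, so the argument adapts essentially verbatim.
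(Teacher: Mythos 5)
Your proposal is correct and takes essentially the same approach as the paper: both identify the $H$-torsor data of a $\Delta\GH$-lift of $\tilde{I}_\Gamma$ with the descent data of a principal $H$-bundle over $S$ (the paper via a contractible cover adapted to the thickened graph and vacuous cocycle conditions, you via locally constant bundles and the homotopy equivalence $\vert\Gamma\vert\hookrightarrow S$), match augmentations with boundary trivializations, and verify invariance under contraction. The one slight imprecision is calling the trivializations over the embedded intervals ``automatic'': contractibility of $\scri$ guarantees a trivialization exists but not a canonical one, and the chosen trivialization is genuine data (exactly what the augmentation records); since your constructions do treat it as data in both directions, this does not affect the argument.
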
 

\begin{proof}
 We label the vertices and edges (including augmentations), by a finite indexing set $I$. It is easy to see from the ribbon/m\"obius graph thickening proceedures that we can find contractible neighborhoods $\{U_i\}$ covering $S$, each containing precisely one of the vertices of the embedded graph, as seen in figure \ref{fig:prinbun}. 
 
 \begin{figure}[h]
  \includegraphics[scale=1]{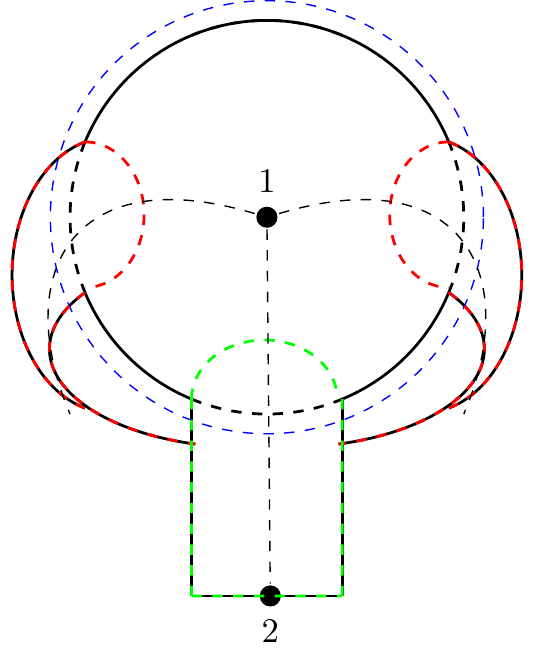}
  \caption{A thickened ribbon graph covered by contractible sets.}
  \label{fig:prinbun}
 \end{figure}

To each point $i$ there is an associated $H$-torsor $\mathscr{O}_i$, so we can give the bundle locally as $E\vert_{U_i}=U_i\times \mathscr{O}_i$. The maps of torsors give transition functions, which vacuously satisfy the cocycle condition, and the trivializations of the augmented half-edges yield trivializations on embedded boundary components. This construction is obviously invariant under automorphisms of the vertices. Contraction simply means taking a union of two of the $U_i$'s (that do not form a closed loop, ie do not admit a retraction to $S^1$), and identifying the torsors over them via the appropriate transition function. Therefore, the construction is well defined on equivalence classes of graphs.

Now, supposing we have the representative $\Gamma$ embedded in $S$, with a principal $H$-bundle $\pi:E\to S$, we can take the $H$-torsor for the vertex/edge $i$,  $\mathscr(O)_i$, to be $\pi^{-1}(i)$. The transition functions give maps between the torsors, recovering the required structure. Now, the $\Delta\GH$-structured graph $\Gamma$ uniquely determines a morphism in $\GHbord$ (equivalence class).

It is clear that these constructions are inverse to one another, and so the lemma is proved.
\end{proof}

\begin{defn}
 For $H$ a finite group, we define a symmetric monoidal category $\Hcob$, whose morphisms are morphisms in $\Cob$ equipped with a principal $H$-bundle, and trivializations thereof on the marked boundary components. Gluing of morphisms takes place in the obvious way. 
\end{defn}

\begin{rem}
It is worth noting that the construction in lemma \ref{lem:principalbundleequiv} also clearly sends the concatenation of graphs to the composition of bordisms, so that, in fact, we have an equivalence of categories:
\begin{equation*}
\Hcob \cong \GHbord
\end{equation*}
\end{rem}

So we have proved, as a corolary to lemma \ref{lem:principalbundleequiv} and theorem \ref{thm:CSFTfin}, the following:

\begin{cor}
 Topological field theories
 \begin{equation*}
 Z:\Hcob \to \Vect_k
 \end{equation*}
are equivalent to $\Delta\GH$-frobenius algebras over $k$.
\end{cor}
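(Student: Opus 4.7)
The plan is to exhibit the desired equivalence as a composition of two equivalences already available to us: the equivalence $\Hcob \cong \GHbord$ provided by Lemma \ref{lem:principalbundleequiv}, and the classification given by Theorem \ref{thm:CSFTfin} applied to the crossed simplicial group $\Delta\GH$.

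First I would verify that $\Delta\GH = \Delta\G \times BH$ falls within the scope of Theorem \ref{thm:CSFTfin}, i.e. that it is balanced. This is precisely the content of Example \ref{ex:BH}, where the duality is given componentwise (using the duality on $\Delta\G$ coming from planarity and the canonical self-duality of $BH$), and the pullback-isomorphism conditions are inherited from $\Delta\G$ because pullbacks along morphisms of $\Delta$ act trivially on the $BH$-factor. Having established this, Theorem \ref{thm:CSFTfin} immediately yields
\begin{equation*}
\CSFT_k(\Delta\GH) = \mathrm{Fun}^\otimes(\GHbord, \Vect_k) \cong \Gfrob_k(\Delta\GH).
\end{equation*}

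Next I would promote the equivalence of Lemma \ref{lem:principalbundleequiv} to an equivalence of symmetric monoidal categories. The object-level assignment is the obvious one, and the morphism-level bijection is already established in the lemma. What remains to check is (a) functoriality with respect to composition, which was noted in the remark following the lemma (the local construction of $E$ over contractible neighborhoods is compatible with gluing along trivialized boundary components), and (b) compatibility with disjoint union, which is automatic since both the $\mathbb{G}$-structure and the $H$-bundle restrict to disjoint components independently. Thus $\Hcob \cong \GHbord$ as symmetric monoidal categories, and consequently
\begin{equation*}
\mathrm{Fun}^\otimes(\Hcob, \Vect_k) \cong \mathrm{Fun}^\otimes(\GHbord, \Vect_k).
\end{equation*}

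Combining the two displayed equivalences yields the corollary. The only step that requires any genuine verification, as opposed to invocation, is the symmetric monoidal upgrade of Lemma \ref{lem:principalbundleequiv}; this is the mild obstacle, but since both the structure maps (disjoint union, trivialized boundary data) and the construction of the $H$-bundle from torsor data are manifestly local on the graph / surface, the coherence isomorphisms are inherited from those of $\Hcob$ and $\GHbord$ with no additional work. Once this is in hand, the corollary is a formal consequence.
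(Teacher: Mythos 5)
Your proposal is correct and follows the paper's own route: the paper likewise obtains the corollary by combining Lemma \ref{lem:principalbundleequiv} (together with the remark upgrading it to the equivalence $\Hcob\cong\GHbord$) with Theorem \ref{thm:CSFTfin} applied to the balanced crossed simplicial group $\Delta\GH$ from Example \ref{ex:BH}. The extra care you take in checking balancedness and the symmetric monoidal compatibility is exactly what the paper leaves implicit, so there is nothing further to add.
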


\begin{rem}
 We can, in fact, give a more complete characterization of a $\Delta\GH$-frobenius algebra. It will be $\Delta\G$-frobenius algebra $A$, along with an action of $H$ on $A$ by algebra automorphisms such that the 1-trace is invariant under the action of $H$. 
\end{rem}

\newpage

\end{document}